\allowdisplaybreaks \numberwithin{equation}{section}
\newtheorem{theorem}{Theorem}[section]
\newtheorem{proposition}[theorem]{Proposition}
\newtheorem{corollary}[theorem]{Corollary}
\newtheorem{lemma}[theorem]{Lemma}
\newtheorem{namedtheorem}{Theorem}
\theoremstyle{definition}
\newtheorem{question}[theorem]{Question}
\theoremstyle{remark}
\newtheorem{remark}[theorem]{Remark}
\def\bP{{\mathbb P}}
\begin{document}


\author{Nicola Picoco}
\title{Geometry of points satisfying Cayley-Bacharach conditions and applications}
\address{Dipartimento di Matematica, Universit\`a degli Studi di Bari, Via Edoardo Orabona 4, 70125 Bari -- Italy}
\email{nicola.picoco@uniba.it}
\thanks{}
\subjclass[2020]{Primary 14C20 14H51 14J70 Secondary 14N05  14N20.}
\keywords{Cayley-Bacharach property; linear series; hypersurfaces; correspondences with null trace.}
\date{}
\maketitle

\begin{abstract}
In this paper, we study the geometry of points in complex projective space that satisfy the Cayley-Bacharach condition with respect to the complete linear system of hypersurfaces of given degree. In particular, we improve a result by Lopez and Pirola and we show that, if $k\geq 1$ and $\Gamma =\{P_1,\dots,P_d\}\subset \mathbb{P}^n$ is a set of distinct points satisfying the Cayley-Bacharach condition with respect to $|\mathcal{O}_{\mathbb{P}^n}(k)|$, with $d\leq h(k-h+3)-1$ and $3\leq h\leq 5$, then $\Gamma$ lies on a curve of degree $h-1$. Then we apply this result to the study of linear series on curves on smooth surfaces in $\mathbb{P}^3$. Moreover, we discuss correspondences with null trace on smooth hypersurfaces of $\mathbb{P}^n$ and on codimension $2$ complete intersections.
\end{abstract}
\medskip

\section{Introduction}

Let $\mathcal{D}$ be a complete linear system on a smooth projective variety $X$. We say that a set of points $\Gamma=\{P_1,\dots,P_d\}$ satisfies the \emph{Cayley-Bacharach condition with respect to $\mathcal{D}$} if for every $i=1,\dots,d$ and for any effective divisor $D\in\mathcal{D}$ passing through $P_1,\dots,\widehat{P_i},\dots,P_d$, we have $P_i\in D$ as well. In particular, if a finite set of points $\Gamma\subset\mathbb{P}^n$  satisfies the Cayley-Bacharach condition with respect to the complete liner system $|\mathcal{O}_{\mathbb{P}^n}(k)|$ of hypersurfaces of degree $k$, we will say  that $\Gamma$ is $CB(k)$.

This definition of the Cayley-Bacharach condition is the modern version of a very classical property that can be found since some milestone results of ancient geometry (see \cite{EGH} for a detailed historical background). A turning point towards a modern point of view was the celebrated theorem of Cayley and Bacharach; it asserts that if $\Gamma=\{P_1,\dots,P_{de}\}\subset\mathbb{P}^2$ is the collection of intersection points between two plane curves of degree $d$ and $e$ respectively, then the set $\Gamma$ is $CB(d+e-3)$.

Although the Cayley-Bacharach condition is linked to classical issues, it represents a very fruitful field with many open questions and various applications. For instance, in the last decade there has been a new and growing interest for the Cayley-Bacharach property due to its applications to the study of \emph{measures of irrationality} for projective varieties (cf. \cite{LP}, \cite{B}, \cite{BCD}, \cite{BDELU} and \cite{GK}). Furthermore,  this property has been recently studied from an algebraic viewpoint in terms of $0$-dimensional affine algebras over arbitrary base fields (cf. [KLR]).\\

In this paper we are concerned with the geometry of points $P_1,\dots,P_d\in\mathbb{P}^n$ satisfying the Cayley-Bacharach condition. An important result in this direction is a theorem by Lopez and Pirola that states that if $d \leq h(k -h +3)- 1$ for some $h\geq 2$, then $\Gamma$ lies on a curve of degree $h - 1$. Furthermore, they prove the same result also in $\mathbb{P}^3$ but only for $2 \leq h \leq 4$ (cf. \cite[Lemma 2.5]{LP}).

It is therefore natural to investigate whether it remains true in any $\mathbb{P}^n$ and for all $h\geq 2$. More precisely, we can formulate the following question.

\begin{question}\label{question} Let $\Gamma$ be a set of $CB(k)$ distinct points in $\mathbb{P}^n$ such that  $|\Gamma|\leq h(k -h +3)- 1$. Does $\Gamma$ lie on a curve of degree $h-1$ for any $h\geq 2$?\end{question}

The case $h=2$ is completely understood. Namely, in \cite[Lemma 2.4]{BCD} the authors prove that if $\Gamma\subset\mathbb{P}^n$ is a set of distinct points $CB(k)$ and $|\Gamma|\leq 2k+1$, then $\Gamma$ lies on a line.

We deal with the cases $h=3,4$ and $5$ proving the following theorem which summarizes Theorems \ref{Th3},  \ref{Th4} and  \ref{Th5}.

\begin{namedtheorem}\label{ThA}
Let $\Gamma =\{P_1,\dots,P_d\}\subset \mathbb{P}^n$ be a set of distinct points satisfying the Cayley-Bacharach condition with respect to $|\mathcal{O}_{\mathbb{P}^n}(k)|$, with $k\geq 1$. For any $3\leq h\leq 5$, if
\begin{equation}\label{boundLP}
d\leq h(k-h+3)-1
\end{equation}
then $\Gamma$ lies on a curve of degree $h-1$.
\end{namedtheorem}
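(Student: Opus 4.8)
The plan is to prove Theorem A by induction on $h$, treating the cases $h=3,4,5$ successively and using the already-established cases ($h=2$ from \cite[Lemma 2.4]{BCD}, and in $\mathbb{P}^3$ the cases $2\le h\le 4$ from \cite[Lemma 2.5]{LP}) as base steps. The overarching strategy, following Lopez--Pirola, is the following dichotomy. Given $\Gamma$ that is $CB(k)$ with $d\le h(k-h+3)-1$, one either finds that $\Gamma$ is contained in a hypersurface of small degree, which lets us reduce the ambient dimension, or one must produce a curve of degree $h-1$ directly. More precisely, I would first use the $CB(k)$ hypothesis to control how $\Gamma$ can fail to impose independent conditions: if $\Gamma$ imposes independent conditions on $|\mathcal{O}_{\mathbb{P}^n}(k-h+2)|$, then a general point-count shows $\Gamma$ cannot be too large, contradicting a careful reading of \eqref{boundLP}; hence $\Gamma$ fails to impose independent conditions on forms of degree roughly $k-h+2$, and one extracts a hypersurface $F$ of controlled degree through a large subset.

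**The reduction step:**

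The key technical device is the classical ``separation/residuation'' trick for Cayley-Bacharach sets: if $F$ is a hypersurface of degree $m$ and we write $\Gamma = \Gamma' \sqcup \Gamma''$ with $\Gamma' = \Gamma \cap F$, then $\Gamma''$ is $CB(k-m)$ (inside the possibly smaller projective space cut out if $F$ is not itself part of a linear subspace situation). I would apply this with $m$ chosen so that $\Gamma''$ satisfies the hypothesis of the theorem for a \emph{smaller} value of $h$ or in a \emph{smaller} $\mathbb{P}^{n'}$, invoke the inductive hypothesis to put $\Gamma''$ on a curve of degree $h-2$ (or a line, etc.), and then glue: the union of that curve with a curve through $\Gamma'$ (obtained from $\Gamma' \subset F$ combined with a further dimension reduction on $F$ itself) gives a curve of degree $\le h-1$ through all of $\Gamma$. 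For the cases $h=3,4,5$ the bookkeeping is finite: one has to check that the numerical bound \eqref{boundLP} for $h$ really does force the residual set $\Gamma''$ into the range where a lower case applies — this is where the precise shape $h(k-h+3)-1$, rather than something weaker, is essential, and it is exactly the inequality one must verify by hand for each $h\in\{3,4,5\}$.

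**The main obstacle:**

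The hard part will be the passage from $\mathbb{P}^3$ to arbitrary $\mathbb{P}^n$ for $h=4,5$, i.e. removing the dimension restriction present in \cite[Lemma 2.5]{LP}. In higher $\mathbb{P}^n$ the ``hypersurface of small degree through a large subset'' that one extracts need not be a hyperplane, so the naive induction on $n$ (slice by a hyperplane, apply the result in $\mathbb{P}^{n-1}$) does not immediately close, because a curve of degree $h-1$ in each hyperplane slice need not assemble into a single curve of degree $h-1$ in $\mathbb{P}^n$ — the degrees could add. The resolution I would pursue is to show that when $\Gamma$ spans $\mathbb{P}^n$ with $n$ large relative to $h$, the $CB(k)$ condition together with $d\le h(k-h+3)-1$ is simply impossible unless $\Gamma$ already lies in a linear subspace of dimension bounded in terms of $h$ (a ``$\Gamma$ is degenerate or small'' lemma, proved by projecting from points of $\Gamma$ and using the $h=2$ case as a seed), thereby reducing every instance to the case $n\le h-1$ or so, where \cite[Lemma 2.5]{LP} and the residuation argument above finish the proof. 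Making that degeneracy lemma sharp enough to cover $h=5$ is the crux; I expect it to require analyzing the cases where $\Gamma$ lies on a possibly reducible or non-reduced surface and controlling the components via repeated application of the lower-$h$ cases.
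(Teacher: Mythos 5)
Your overall framework --- residuation of $\Gamma$ by an auxiliary subvariety combined with induction and the lower cases $h=2$ and \cite[Lemma 2.5]{LP} --- is broadly the right spirit, and the paper's proofs of $h=3,4$ do run along such lines, although the induction there is on $k$ rather than on $h$: one takes a linear space $H$ containing the maximal number $\alpha$ of points of $\Gamma$, observes that $\Gamma\setminus H$ is $CB(k-1)$ and still satisfies the bound for the \emph{same} $h$, and then performs a finite case analysis on the components of the resulting degree-$(h-1)$ curve. However, your plan has a genuine gap at exactly the point you identify as the crux. First, the proposed degeneracy lemma reducing to $n\le h-1$ ``or so'' cannot hold in the form you need: a reduced curve of degree $4$ (for instance four pairwise skew lines) can span up to $\mathbb{P}^7$, so for $h=5$ the set $\Gamma$ need not lie in any low-dimensional linear subspace, and the paper instead carries the induction through every $\mathbb{P}^n$. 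Second, and more seriously, even after a reduction to low ambient dimension your plan falls back on \cite[Lemma 2.5]{LP} together with residuation --- but that lemma covers only $h\le 4$ in $\mathbb{P}^3$ (and $h\le 5$ only in $\mathbb{P}^2$), so $h=5$ in $\mathbb{P}^3$ and $\mathbb{P}^4$ is a genuinely new base case, and it is the bulk of the paper.

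That base case cannot be closed by repeated application of the lower-$h$ cases or by residuation alone. The paper first places $\Gamma$ on a complete intersection curve $C$ of degree $8$ or $9$ and must then rule out, among many configurations, the possibility that $\Gamma$ meets an \emph{irreducible} component of degree $5\le d\le 9$. Nothing in the lower-$h$ statements or in the residuation trick excludes this; the exclusion rests on a cohomological criterion (Proposition \ref{LGreco} and Corollary \ref{CGreco}) combining Greco's vanishing theorem on the postulation of zero-dimensional subschemes, the Gruson--Lazarsfeld--Peskine surjectivity of $H^0(\mathcal{O}_{\mathbb{P}^n}(k))\to H^0(\mathcal{O}_C(k))$, and Castelnuovo's genus bound, which together show that at most $5k-11$ points on such a curve can never be $CB(k)$. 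This ingredient is entirely absent from your proposal, and without it (or a substitute) the argument does not close for $h=5$. Finally, your gluing step tacitly assumes that the part of $\Gamma$ on the auxiliary hypersurface lies on a line (so that degrees add up to $h-1$); in the paper this is a \emph{conclusion} extracted from the case analysis via the definition of $\alpha$, not something one may assume.
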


We recall that cases $h=3,4,5$ in $\bP^2$ and cases $h=3,4$ in $\bP^3$ are covered by \cite[Lemma 2.5]{LP}, so Theorem \ref{ThA} is an extension of this result.
As a byproduct of Theorem \ref{ThA} we get some improvements of results in \cite{SU} and \cite{LU}. Let $\Gamma$ be a set of points $CB(k)$. In  \cite[Theorem 1.9]{SU} the authors prove that $\Gamma$ lies on a curve of degree at most $2$ provided that $|\Gamma|\leq \frac{5}{2}k+1$. We note that this bound is smaller than $3k-1$ (i.e. the bound (\ref{boundLP}) with $h=3$) as soon as $k\geq 4$, so Theorem \ref{ThA} does provide an improvement. Moreover, by Theorem \ref{Th5}, we get a partial improvement of \cite[Theorem 1.3 (iii)]{LU} which ensures that if $\Gamma$ is $CB(k)$ and $|\Gamma|$ is bounded by a value depending on $k$ and on an integer $d$, then $\Gamma$ lies on a union of positive-dimensional linear subspace of $\mathbb{P}^n$ whose dimensions sum to $d$ (cf.  Remark \ref{improveConjLU}).

Concerning the proof of Theorem \ref{ThA}, the cases $h=3,4$ are achieved by induction on the dimension of the ambient space $\mathbb{P}^n$, using the fact that the assertion for $n=2,3$ is covered by \cite[Lemma 2.5]{LP}. On the other hand, the case $h=4$ is more complicated, because the induction argument requires to prove separately the cases $n=3$ and $n=4$. To this aim we extend to this setting the argument of \cite{LP}. In particular, we first prove that $\Gamma$ lies on a reduced curve $C$ of degree at most $9$. Then we distinguish several cases (depending on the irreducible components of $C$) and combining \cite[Theorem 1.5]{G}, the main result of \cite{GLP} and properties of points $CB(k)$ (see e.g. \cite{LU}) we show that the sum of the degrees of the irreducible components $C_i$ of $C$ such that $C_i\cap \Gamma\neq \emptyset$ is at most $4$, as wanted.\\

In the last section we present various applications of Theorem \ref{ThA}.  One of those concerns the study of linear series on curves lying on smooth surfaces in $\mathbb{P}^3$. Following the very same argument in \cite{LP}, we prove the following slight improvement of \cite[Theorem 1.5]{LP}.

\begin{namedtheorem}\label{THlinearseries}
Let $S\subset\mathbb{P}^3$ be a smooth surface of degree $d\geq 5$, $C$ an integral curve on $S$ such that $|\mathcal{O}_C\otimes \mathcal{O}_S(C)|$ is base point free and $g^r_n$ a base point free special linear series on $C$ that is not composed with an involution if $r\geq 2$. If $n\leq 5d-31$ there exists an integer $h$, with $1\leq h\leq 4$, such that 
\begin{equation}\label{hgrnOnSurface}
h(d-h-1)\leq n\leq \min\{hd,(h+1)(d-h-2)-1\}
\end{equation}
and the general divisor of the $g^r_n$ lies on a curve of degree $h$.
\end{namedtheorem}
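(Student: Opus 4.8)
The plan is to follow the proof of \cite[Theorem 1.5]{LP} essentially verbatim, the only new input being that Theorem \ref{ThA} is now available in place of \cite[Lemma 2.5]{LP}; it is exactly this that lets one reach curves of degree up to $4$ and the range $n\leq 5d-31$. We may assume $n\geq 2$, the cases $n\leq 1$ being degenerate (and $n=1$ not arising for a special series).

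The first and main step is to reduce the statement to a Cayley--Bacharach problem in $\mathbb{P}^3$. Let $D=P_1+\dots+P_n$ be a general divisor of the $g^r_n$. Because the series is base point free and, when $r\geq 2$, not composed with an involution, for general $D$ the points $P_1,\dots,P_n$ are distinct (by generic smoothness of $C\to\mathbb{P}^1$ when $r=1$, and because the associated morphism is birational onto its image so that a general hyperplane section pulls back to a reduced divisor when $r\geq 2$), and $|\mathcal{O}_C(D)|$ is base point free, whence $h^0(\mathcal{O}_C(D))-h^0(\mathcal{O}_C(D-P_i))=1$ for every $i$. I claim $\Gamma:=\{P_1,\dots,P_n\}$ satisfies the Cayley--Bacharach condition with respect to the canonical system $|K_C|$: Riemann--Roch turns the last equality into $h^0(K_C-D+P_i)=h^0(K_C-D)$, i.e. $P_i$ is a base point of $|K_C-D+P_i|$; hence any effective $E\in|K_C|$ with $E\geq D-P_i$ can be written $E=(D-P_i)+B$ with $B\geq 0$ and $B\sim K_C-D+P_i$, so $B\geq P_i$ and $E\geq D$. (The hypothesis that the series is \emph{special}, i.e. $h^1(\mathcal{O}_C(D))=h^0(K_C-D)>0$, is what makes this condition non-vacuous.) I would then push it forward to hypersurfaces of degree $d-4$: since $d\geq 5$ one has $K_S=\mathcal{O}_S(d-4)$, so by adjunction $K_C=\mathcal{O}_C(d-4)\otimes\mathcal{O}_C(C)$, and the hypothesis that $|\mathcal{O}_C\otimes\mathcal{O}_S(C)|$ is base point free provides an effective $C_0\in|\mathcal{O}_C(C)|$ which, for general $D$, is disjoint from $\operatorname{Supp}(D)$. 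If a surface $\Sigma$ of degree $d-4$ passes through $P_1,\dots,\widehat{P_i},\dots,P_n$, then $\Sigma|_C+C_0\in|K_C|$ contains $D-P_i$, hence $D$ by the previous paragraph, and since $P_i\notin\operatorname{Supp}(C_0)$ we get $P_i\in\Sigma$. Therefore $\Gamma$ is $CB(d-4)$ in $\mathbb{P}^3$.

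It then remains to extract the numerical conclusion, and here everything is arranged to fit Theorem \ref{ThA}. Put $f(h):=(h+1)(d-h-2)-1$, so that $f(4)=5d-31$, $f(0)=d-3$ and $f(h-1)=h(d-h-1)-1$, and let $h$ be the smallest non-negative integer with $n\leq f(h)$. The elementary fact that a $CB(k)$ set with more than one point has at least $k+2$ points gives $n\geq d-2>f(0)$, so $h\geq 1$; and $n\leq 5d-31=f(4)$ gives $h\leq 4$. By minimality $n>f(h-1)=h(d-h-1)-1$, i.e. $h(d-h-1)\leq n$; by the choice of $h$, $n\leq f(h)=(h+1)(d-h-2)-1$. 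When $h\geq 2$ this last inequality is precisely the bound \eqref{boundLP} for the set $\Gamma$ with $k=d-4$ and with the integer of Theorem \ref{ThA} taken to be $h+1\in\{3,4,5\}$, so Theorem \ref{ThA} puts $\Gamma$ on a curve $\Delta$ of degree $(h+1)-1=h$; when $h=1$ the inequality reads $n\leq 2(d-4)+1$ and \cite[Lemma 2.4]{BCD} puts $\Gamma$ on a line. Finally, if $\Delta$ may be chosen not contained in $S$, then $D\subseteq\Delta\cap S$ and B\'ezout gives $n\leq hd$; in the remaining case $D$ lies on a component of $\Delta$, of degree at most $h$, contained in $S$, and one concludes $n\leq hd$ as in \cite{LP} by combining $n\leq 2g(C)-2$ (valid since the series is special) with Clifford's and the Hodge index inequalities. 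This establishes \eqref{hgrnOnSurface} together with the assertion on the general divisor.

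I expect no serious obstacle beyond the care needed in the first step: proving rigorously that a general divisor of a base point free, special $g^r_n$ which is not composed with an involution is a reduced $n$-point set that is $CB(d-4)$ — this is where the hypotheses ``base point free'', ``special'' and ``not composed with an involution'', together with $d\geq 5$, all enter, as above. The bookkeeping with $K_C=\mathcal{O}_C(d-4)\otimes\mathcal{O}_C(C)$ and the auxiliary divisor $C_0$, and the treatment of the degenerate case in which Theorem \ref{ThA} returns a curve inside $S$, are routine and already present in \cite{LP}.
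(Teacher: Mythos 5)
Your first two steps coincide with the paper's argument: the reduction of the general divisor $D\in g^r_n$ to a set of distinct points that is $CB(d-4)$ is exactly \cite[Lemma 3.1]{LP} (quoted as Lemma \ref{grnSP}), which the paper simply cites, and your extraction of the integer $h$ and of the two bounds $h(d-h-1)\leq n\leq (h+1)(d-h-2)-1$ from Theorem \ref{ThA} (with \cite[Lemma 2.4]{BCD} supplying the case $h=1$) is the intended bookkeeping. Up to that point the proposal is correct.

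The gap is in the final step, the inequality $n\leq hd$. When the degree-$h$ curve $E$ through the general divisor has a component contained in $S$ (and note that $D$ may be split among several components, only some of which lie in $S$), you defer to ``\cite{LP}\dots Clifford's and the Hodge index inequalities.'' But \cite[Theorem 1.5]{LP} only treats $1\leq h\leq 3$; the entire content of Theorem \ref{THlinearseries} beyond \cite{LP} is the case $h=4$, and that is precisely where the old mechanism fails. For $h\leq 3$ a component of $E$ inside $S$ is rational or elliptic, and since the $g^r_n$ is base point free such components would, as $D$ varies, cover $S$ (finitely many fixed curves cannot meet every general $D$, because the divisors of the $g^r_n$ through a fixed point form a hyperplane in a $\mathbb{P}^r$ with $r\geq 1$); this contradicts $S$ being of general type. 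For $h=4$ the offending component can be a \emph{plane quartic}, of genus up to $3$, which is neither rational nor elliptic; and the Hodge-index bound on its intersection with $C$ involves $\deg C$ and $C^2$, which are not controlled, so it does not yield $n\leq 4d$. The paper closes this case with \cite[Theorem A]{BDELU}: plane quartics have gonality at most $3$, so $S$ would satisfy $\covgon(S)\leq 3$, whereas $\covgon(S)\geq d-2$; since $h=4$ forces $d\geq 11$ by (\ref{hgrnOnSurface}), this is a contradiction. You need to supply this (or an equivalent) argument; as written, the case of a degenerate quartic component contained in $S$ is not handled.
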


Our contribution is the improvement of the upper bound on $n$ (that previously was $n\leq 4d-21$) allowing the case $h=4$. The crucial point in this theorem is that the support of a general divisor on suitable linear series on a curve that moves on a smooth surface $S$ in $\mathbb{P}^3$ satisfies the Cayley-Bacharach condition with respect to $|K_S|$ (cf. \cite[Lemma 3.1]{LP}). 

Next we present an improvement of \cite[Theorem 1.3]{LP} concerning the so-called \emph{correspondences with null trace} on surfaces in $\mathbb{P}^3$ (see Subsection \ref{subcor}) and we extend it to any hypersurfaces in $\bP^n$. It is worth noticing that correspondences with null trace on algebraic varieties have been recently discussed in \cite{LM}, together with some interesting results on their degree in the case of very general hypersurfaces. We prove the following.

\begin{namedtheorem}\label{THcorrispnull}
Let $n\geq 3$ and let $X\subset\mathbb{P}^n$ be a smooth hypersurface of degree $d\geq n+2$. Let $\Gamma$ a correspondence of degree $m$ with null trace on $Y\times X$. Then if $m\leq 5(d-n)-16$, the only possible values of $m$ are $d-n+1\leq m\leq d$, $2(d-n)\leq m\leq 2d$, $3(d-n-1)\leq m\leq 3d$ and $4(d-n-2)\leq m\leq 4d$.
\end{namedtheorem}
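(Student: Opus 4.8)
The plan is to feed the general fibre of the correspondence into Theorem \ref{ThA}, after using the null-trace hypothesis together with adjunction to produce a Cayley--Bacharach configuration.

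First I would fix a general $y\in Y$ and look at the fibre $\Gamma_y\subset X$, which consists of $m$ distinct points (we may assume $m\geq 2$, since a degree-one correspondence dominating $X$ would induce an injection $H^0(X,K_X)\hookrightarrow H^0(Y,K_Y)$, incompatible with the vanishing of the trace because $h^0(X,\mathcal{O}_X(d-n-1))>0$). By the extension to hypersurfaces of $\mathbb{P}^n$ of \cite[Lemma 3.1]{LP} — this is where the null trace enters, and the proof is the same residue/trace computation once one knows $K_X=\mathcal{O}_X(d-n-1)$ — the set $\Gamma_y$ satisfies the Cayley--Bacharach condition with respect to $|K_X|$. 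Put $k:=d-n-1\geq 1$. Since $H^1(\mathbb{P}^n,\mathcal{O}_{\mathbb{P}^n}(k-d))=0$, the restriction $H^0(\mathbb{P}^n,\mathcal{O}_{\mathbb{P}^n}(k))\to H^0(X,\mathcal{O}_X(k))$ is surjective, so every divisor of $|K_X|=|\mathcal{O}_X(k)|$ is cut on $X$ by a degree-$k$ hypersurface; hence $\Gamma_y$ is $CB(k)$ in $\mathbb{P}^n$.

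Next I would invoke Theorem \ref{ThA}. Rewriting the hypothesis as $m\leq 5(d-n)-16=5(k-5+3)-1$, the case $h=5$ of Theorem \ref{ThA} — together with its cases $h=3,4$, the case $h=2$ of \cite[Lemma 2.4]{BCD}, and the elementary fact that a $CB(k)$ set with at least two points has at least $k+2$ points — shows that $\Gamma_y$ lies on a curve of degree at most $4$. Let $g\in\{1,2,3,4\}$ be the least degree of a curve through $\Gamma_y$, which is constant for $y$ in a dense open of $Y$. Since $\Gamma_y$ lies on no curve of degree $g-1$, the contrapositive of Theorem \ref{ThA} with $h=g$ gives
\begin{equation*}
m\geq g(k-g+3)=g(d-n+2-g),
\end{equation*}
which is exactly the left endpoint of the $g$-th interval in the statement ($d-n+1,\ 2(d-n),\ 3(d-n-1),\ 4(d-n-2)$ for $g=1,2,3,4$).

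It remains to prove the upper bound $m\leq gd$, and this is the crux. Here I would follow \cite[Theorem 1.3]{LP}, now in $\mathbb{P}^n$ and including the value $g=4$: the curves $C_y$ form a family covering a subvariety of $X$, one restricts $\Gamma$ to this family, and one distinguishes whether $y\mapsto C_y$ is generically finite or has positive-dimensional fibres; in the latter case the null-trace property forces $\Gamma_y$ to be a fibre of a morphism from $C_y$ onto a curve, and in both cases, using $\deg C_y=g$ and $C_y\subset X$, one concludes $m\leq gd$. For $g\leq 3$ this is essentially \cite[Theorem 1.3]{LP}; the new value $g=4$ requires treating separately the possible types of a degree-$4$ curve (plane quartics, irreducible space quartics, curves spanning $\mathbb{P}^4$, and reducible configurations), the reducible ones being reduced to smaller $g$ by restricting the Cayley--Bacharach property to the components of $C_y$ meeting $\Gamma_y$, as in the proof of Theorem \ref{Th4}. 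Putting the two bounds together, $m\in[g(d-n+2-g),\,gd]$ for some $g\in\{1,2,3,4\}$, which is the claim. The main obstacle is precisely this last step for $g=4$: controlling the degree of a covering family of quartic curves on a hypersurface of $\mathbb{P}^n$ and excluding the degenerate configurations through the combined use of the null-trace and Cayley--Bacharach conditions.
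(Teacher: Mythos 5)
Your first half coincides with the paper's argument: the null-trace hypothesis makes the general fibre $\Gamma_y$ a $CB(d-n-1)$ set in $\mathbb{P}^n$ (the paper simply quotes \cite[Proposition 4.2]{B} rather than re-proving an analogue of \cite[Lemma 3.1]{LP}, but this is immaterial), and the lower endpoints $g(d-n+2-g)$ follow from Lemma \ref{minumCB} together with the contrapositive of Theorem \ref{ThA}, exactly as you say.

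The genuine gap is the upper bound $m\leq gd$, which you correctly identify as the crux but do not prove: you propose to transplant the entire family-of-curves analysis of \cite[Theorem 1.3]{LP} (generically finite versus positive-dimensional fibres, realizing $\Gamma_y$ as a fibre of a morphism from $C_y$ to a curve, a case division over the types of degree-$4$ curves), and you concede that the case $g=4$ remains an unresolved obstacle. None of that machinery is needed. The paper argues by contradiction on each gap: if $(h-1)d+1\leq m\leq h(d-n-h+2)-1$ for some $2\leq h\leq 5$, then $\Gamma_y$ lies on a curve of degree $h-1$ meeting the hypersurface $X$ in more than $(h-1)d$ points, so by Bezout some irreducible component $E$ of degree $e\leq 4$ must be contained in $X$; since the fibre is general, $X$ is covered by such curves. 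Curves of degree $e\leq 3$, and non-degenerate quartics, are rational or elliptic, which is impossible on the general-type $X$; a degenerate (plane) quartic has gonality at most $3$, so $\covgon(X)\leq 3$, contradicting $\covgon(X)\geq d-n+1$ from \cite[Theorem A]{BDELU}, because the relevant gap ($h=5$) is non-empty only when $d\geq 5n+17$. So the step you flag as ``the main obstacle'' reduces to a short Bezout-plus-covering-gonality argument; until you supply it (or this replacement), the proof is incomplete.
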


Actually we prove a slightly finer statement, that is, if $m\leq h(d-n-h+2)-1$ for $2\leq h\leq 5$, then the only possible values of $m$ are $(s-1)(d-n-s+3)\leq m\leq (s-1)d$ for $2\leq s\leq h$ (cf. proof of Proposition \ref{strongTHcorrispnull}). Moreover, we point out that the bound $m\geq d-n+1$ was already obtained in \cite[Theorem 1.2]{BCD}.

Finally, we give a partial extension of \cite[Theorem 1.4]{LU} about the geometry of fibers of low degree maps to projective space from certain codimension two complete intersections (cf. Corollary \ref{corrispAndPlanConf}) by studying this problem in terms of correspondences with null trace on these varieties (cf. Proposition \ref{planeconfOfCorresp}).\\

The paper is organized as follows. In Section 2 we recall some basic properties of sets of points in $\mathbb{P}^n$ that satisfies Cayley-Bacharach condition with respect to the complete liner system $|\mathcal{O}_{\mathbb{P}^n}(k)|$ and we prove some technical results. Section 3 is devoted to prove case $h=3$ and case $h=4$ of Theorem \ref{ThA}. In Section 4 we deal with the proof of case $h=5$ of Theorem \ref{ThA}.  Finally, in Section 5, we are concerned with applications of Theorem \ref{ThA} to linear series on curves, correspondences with null trace and plane configurations of points in $\bP^n$.\\
\medskip

\section{Properties of points satisfying Cayley-Bacharach condition}

We collect some useful properties of sets that satisfies the Cayley-Bacharach condition with respect to the complete liner system $|\mathcal{O}_{\mathbb{P}^n}(k)|$ of hypersurfaces of degree $k$.

\begin{lemma}\label{minumCB}
Let $\Gamma = \{p_1,\dots,p_m\}\subset\mathbb{P}^n$ be a set of distinct points that is $CB(k)$. If $\Gamma$ is non-empty, then $m\geq k+2$.
\end{lemma}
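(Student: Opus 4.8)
The statement is that a nonempty $CB(k)$ set $\Gamma$ in $\mathbb{P}^n$ must contain at least $k+2$ points. The plan is to argue by contradiction: suppose $1 \le m \le k+1$. The key observation is that through any set of at most $k+1$ points in $\mathbb{P}^n$ one can pass a hypersurface of degree $k$ vanishing on a prescribed subset while avoiding one chosen point. Concretely, fix $p_i \in \Gamma$ and consider the other points $p_1,\dots,\widehat{p_i},\dots,p_m$, of which there are $m-1 \le k$. For each $j \ne i$, choose a hyperplane $H_j$ (a linear form $\ell_j$) passing through $p_j$ but not through $p_i$; this is possible since $p_i \ne p_j$. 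Then the product $\prod_{j \ne i} \ell_j$ has degree $m-1 \le k$, and by multiplying by an extra linear form (or a power of one) not vanishing at $p_i$ we obtain a hypersurface $D$ of degree exactly $k$ passing through all of $p_1,\dots,\widehat{p_i},\dots,p_m$ but with $p_i \notin D$.

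This directly contradicts the Cayley-Bacharach condition with respect to $|\mathcal{O}_{\mathbb{P}^n}(k)|$, which demands that any degree-$k$ hypersurface through $\Gamma \setminus \{p_i\}$ also pass through $p_i$. Hence no such $\Gamma$ with $m \le k+1$ can be $CB(k)$, so $m \ge k+2$.

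A small point to handle carefully is the degenerate case $m = 1$: if $\Gamma = \{p_1\}$, then $\Gamma \setminus \{p_1\} = \emptyset$, and every hypersurface of degree $k$ passes (vacuously) through the empty set, so the condition would force $p_1$ to lie on every degree-$k$ hypersurface, which is absurd for $k \ge 1$ (pick any hypersurface avoiding $p_1$). So the only subtlety is ensuring the construction of $D$ is valid, i.e. that we genuinely have at most $k$ linear conditions and room to pad up to degree $k$; this is immediate since $m - 1 \le k$ and we can always append a linear form not vanishing at $p_i$ (the complement of a hyperplane is nonempty). I do not anticipate any real obstacle here — the argument is elementary; the only care needed is the bookkeeping of the degree padding and the edge case $m=1$.
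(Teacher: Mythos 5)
Your proof is correct: the separation argument (a product of $m-1\le k$ linear forms through the other points, padded by a linear form avoiding $p_i$) is exactly the standard proof of this fact, and you handle the $m=1$ edge case properly. The paper itself offers no argument here — it simply cites \cite[Lemma 2.4]{BCD} — and your write-up is the same elementary argument that underlies that reference, now made self-contained.
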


\begin{proof} See \cite[Lemma 2.4]{BCD}. \end{proof}

\begin{proposition}\label{pointsoutofCB}
Let $\Gamma = \{p_1,\dots,p_m\}\subset\mathbb{P}^n$ be a set of distinct points $CB(k)$ and let $\mathcal{P}=L_1\cup\dots\cup L_r$ be a union of positive-dimensional linear spaces. Then $\Gamma\backslash \mathcal{P}$, if non-empty, is $CB(k-r)$.\\
In particular, the complement of $\Gamma$ by a single linear space is $CB(k-1)$.
\end{proposition}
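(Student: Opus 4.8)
The plan is to reduce everything to the case $r=1$ and then induct on $r$. For the case $r=1$, write $L=L_1$; I want to show that for each $P_i\in\Gamma\setminus L$ and each effective divisor $D\in|\mathcal{O}_{\mathbb{P}^n}(k-1)|$ passing through the remaining points of $\Gamma\setminus L$, one has $P_i\in D$. The idea is to promote $D$ to a degree-$k$ divisor by adding a hyperplane: since $P_i\notin L$ and $L$ is a proper linear subspace of $\mathbb{P}^n$ (it is proper because $\Gamma\setminus L\neq\emptyset$), I can choose a hyperplane $H$ with $L\subseteq H$ but $P_i\notin H$. Then $D+H\in|\mathcal{O}_{\mathbb{P}^n}(k)|$ passes through every point of $\Gamma\setminus\{P_i\}$: the points of $\Gamma\cap L$ lie on $H$ since $L\subseteq H$, and the remaining points of $\Gamma\setminus L$ lie on $D$ by construction. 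The $CB(k)$ property of $\Gamma$ then yields $P_i\in D+H$, and since $P_i\notin H$ this forces $P_i\in D$, as desired.

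For general $r$, I would argue by induction. Set $\mathcal{P}'=L_1\cup\dots\cup L_{r-1}$; since $\emptyset\neq\Gamma\setminus\mathcal{P}\subseteq\Gamma\setminus\mathcal{P}'$, the set $\Gamma\setminus\mathcal{P}'$ is non-empty, so by the inductive hypothesis it is $CB(k-r+1)$. Applying the case $r=1$ to the $CB(k-r+1)$ set $\Gamma\setminus\mathcal{P}'$ and the linear space $L_r$ shows that $\Gamma\setminus\mathcal{P}=(\Gamma\setminus\mathcal{P}')\setminus L_r$ is $CB(k-r)$. The ``in particular'' clause is precisely the case $r=1$. (Note that the argument in fact uses only that each $L_i$ is a proper linear subspace.)

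The only point that requires a word of justification is the existence of the hyperplane $H$ with $L\subseteq H$ and $P_i\notin H$, and this is a routine linear-algebra fact: the hyperplanes of $\mathbb{P}^n$ containing $L$ form a linear system of projective dimension $n-1-\dim L\geq 0$, and requiring a member to pass through $P_i\notin L$ is a nontrivial linear condition on this system, hence cuts out a proper (so, over $\mathbb{C}$, strictly smaller) subsystem. I do not anticipate any genuine obstacle here; beyond this, the proof is just bookkeeping, and the only cases worth a glance are the degenerate ones — for instance $\Gamma\setminus L$ consisting of a single point — which by Lemma~\ref{minumCB} can only occur for small values of $k$ and do not affect the argument.
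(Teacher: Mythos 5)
Your proof is correct and is the standard argument (add a hyperplane containing $L_i$ but missing the point $P_i$ to promote a degree-$(k-1)$ divisor to degree $k$, then induct on $r$); the paper itself gives no proof but simply cites \cite[Proposition 2.5]{LU}, where essentially this same argument appears. No gaps: the existence of the hyperplane $H\supseteq L$ with $P_i\notin H$ follows exactly as you say, since the intersection of all hyperplanes containing $L$ is $L$ itself and $P_i\notin L$.
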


\begin{proof} See \cite[Proposition 2.5]{LU}. \end{proof}

\begin{remark}
In the introduction we mentioned the notion of plane configuration. We will return to this topic in Subsection \ref{subPlaneConf}. Here we anticipate that a plane configuration of length $r$ is a union $\mathcal{P}=L_1\cup\dots\cup L_r$ of positive-dimensional linear spaces. Therefore, the previous proposition can be restate using this terminology.
\end{remark}

\begin{proposition}\label{allineati}
Let $\Gamma = \{p_1,\dots,p_m\}\subset\mathbb{P}^n$ be a set of distinct points $CB(k)$. If $m\leq 2k+1$, then $\Gamma$ lies on a line.
\end{proposition}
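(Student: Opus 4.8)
The plan is to argue by induction on $k$, using as the only inputs Lemma \ref{minumCB} and Proposition \ref{pointsoutofCB}. We may assume $m\geq 2$, since a set of at most one point lies trivially on a line, and we suppose for contradiction that $\Gamma$ does not lie on a line; the aim is then to force $m\geq 2k+2$.

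For the base case $k=1$ I would pick two points $P,Q\in\Gamma$ and set $L=\overline{PQ}$. Since $\Gamma$ is not collinear, $\Gamma\setminus L$ is non-empty, so by Proposition \ref{pointsoutofCB} it is $CB(0)$ and hence has at least $2$ points by Lemma \ref{minumCB}; but $|\Gamma\setminus L|=m-|\Gamma\cap L|\leq (2\cdot 1+1)-2=1$, a contradiction.

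For the inductive step ($k\geq 2$, assuming the assertion for $k-1$) I would again choose $P,Q\in\Gamma$ and put $L=\overline{PQ}$. As before $\Gamma\setminus L$ is non-empty and, by Proposition \ref{pointsoutofCB}, is $CB(k-1)$, and $|\Gamma\setminus L|=m-|\Gamma\cap L|\leq (2k+1)-2=2(k-1)+1$, so the inductive hypothesis applies and yields a line $L'$ containing $\Gamma\setminus L$. Since $\Gamma\setminus L$ is a non-empty subset of $L'$ disjoint from $L$, we have $L'\neq L$, hence $\Gamma\subseteq L\cup L'$. Now $\Gamma\setminus L'$ and $\Gamma\setminus L$ are disjoint subsets of $\Gamma$ (a common point would lie on neither $L$ nor $L'$), the former contained in $L$ and the latter in $L'$, and neither is empty: $\Gamma\setminus L$ is not, and $\Gamma\setminus L'$ is not either, for $\Gamma\subseteq L'$ would make $\Gamma$ collinear. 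Applying Proposition \ref{pointsoutofCB} to each and then Lemma \ref{minumCB}, both $\Gamma\setminus L$ and $\Gamma\setminus L'$ have at least $k+1$ elements, so $m\geq |\Gamma\setminus L|+|\Gamma\setminus L'|\geq 2(k+1)=2k+2$, contradicting $m\leq 2k+1$. This completes the induction.

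The proof is short, and the step I expect to require the most attention is the reduction $\Gamma\subseteq L\cup L'$ together with the bookkeeping that follows: one must verify $L'\neq L$ and be careful that the complementary subsets $\Gamma\setminus L$ and $\Gamma\setminus L'$ are genuinely disjoint and non-empty, so that adding their cardinalities yields the clean bound $m\geq 2k+2$ without double-counting the point where the two lines meet. Everything else is a direct application of the two recalled results.
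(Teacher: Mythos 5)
Your proof is correct. Note that the paper does not prove this proposition at all --- it simply cites \cite[Lemma 2.4]{BCD} --- so you are supplying an argument the paper omits; your induction on $k$, removing the line through two chosen points and applying Proposition \ref{pointsoutofCB} and Lemma \ref{minumCB} to the two complementary pieces, is essentially the standard argument behind the cited result, and your bookkeeping (disjointness and non-emptiness of $\Gamma\setminus L$ and $\Gamma\setminus L'$, and $L'\neq L$) is handled correctly. The only step worth a second look is the base case, where you invoke the two auxiliary results with the parameter $CB(0)$: as stated in the paper neither result excludes $k=0$, and one checks that a single point is not $CB(0)$ (the zero divisor misses it) while Proposition \ref{pointsoutofCB} remains valid in this degenerate case, so the contradiction you derive is legitimate; alternatively the case $k=1$, $m=3$ can be dispatched directly by exhibiting a hyperplane through two non-collinear points missing the third.
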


\begin{proof} See \cite[Lemma 2.4]{BCD}. \end{proof}

In \cite{SU}, Stapelton and Ullery proved that following result.

\begin{proposition}\label{SUresult}
Let $\Gamma = \{p_1,\dots,p_m\}\subset\mathbb{P}^n$ be a set of distinct points $CB(k)$, with $k\geq 1$. If $m\leq \frac{5}{2}k+1$, then $\Gamma$ lies on a reduced curve of degree $2$.
\end{proposition}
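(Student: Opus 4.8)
The plan is to argue by induction on $k$. The base case $k=1$ is immediate from Proposition~\ref{allineati}, since $m\le\frac52k+1$ with $k=1$ gives $m\le 3=2k+1$ and hence $\Gamma$ lies on a line. So assume $k\ge 2$, that the statement holds for all smaller values, and let $\Gamma$ be $CB(k)$ with $m=|\Gamma|\le\frac52k+1$. If $\Gamma$ lies on a line we are done, since a line lies on reduced conics; so assume $\Gamma$ is not contained in a line, whence $m\ge 2k+2$ by Proposition~\ref{allineati}. We shall use two preliminary facts. First, a subset $\Gamma\subseteq H$ of a hyperplane $H\cong\mathbb{P}^{n-1}$ is $CB(k)$ in $\mathbb{P}^n$ if and only if it is $CB(k)$ in $H$ (extend divisors of $H$ by taking cones, and restrict divisors of $\mathbb{P}^n$ by intersecting with $H$); hence we may assume $\Gamma$ spans its ambient space, and since a reduced conic spans a linear subspace of dimension at most $3$, we will in particular have to show that $\Gamma$ spans at most a $\mathbb{P}^3$. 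Second, if $\Gamma$ is $CB(k)$ and $X\subseteq\mathbb{P}^n$ is a hypersurface of degree $e\le k$, then $\Gamma\setminus X$, when non-empty, is $CB(k-e)$: for $X$ a union of linear spaces this is Proposition~\ref{pointsoutofCB}, and in general, given $D'\in|\mathcal{O}_{\mathbb{P}^n}(k-e)|$ through all but one point $p$ of $\Gamma\setminus X$, the divisor $D'+X\in|\mathcal{O}_{\mathbb{P}^n}(k)|$ forces $p\in D'+X$, hence $p\in D'$ because $p\notin X$.

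The core of the argument is the following dichotomy. \textbf{Case A: some line $L$ contains at least $m-2k+1$ points of $\Gamma$.} Then $\Gamma\setminus L$ is $CB(k-1)$ by Proposition~\ref{pointsoutofCB}, it is non-empty since $\Gamma\not\subseteq L$, and it has at most $2(k-1)+1$ points, so by Proposition~\ref{allineati} it lies on a line $L'$; as $\Gamma\not\subseteq L$ forces $L'\ne L$, the union $L\cup L'$ is a reduced conic containing $\Gamma$, and we are done. \textbf{Case B: every line meets $\Gamma$ in at most $m-2k\le\frac{k}{2}+1$ points.} Here I would first observe that $\Gamma$ cannot lie on a union of two lines, since one of the two would then contain at least $m/2$ points of $\Gamma$, forcing $m\ge 4k$, which contradicts $m\le\frac52k+1$. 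I would then let $Q_0$ be a reduced conic maximizing $|\Gamma\cap Q_0|$; if $\Gamma\subseteq Q_0$ we are done, so assume $\Gamma\setminus Q_0\ne\emptyset$, so that $\Gamma\setminus Q_0$ is $CB(k-2)$ and therefore has at least $k$ points by Lemma~\ref{minumCB}. The plan is then to apply the induction hypothesis to $\Gamma\setminus Q_0$ once it is small enough, to split $Q_0$ into its (at most two) irreducible components, and to combine the resulting bounds with the Case~B constraint that every line through $\Gamma$ is short, so as to conclude that either $\Gamma$ lies on an irreducible conic or $m\ge\frac52k+2$, contradicting the hypothesis. That the irreducible case genuinely occurs is seen from the classical configuration in which an irreducible conic meets a plane curve of degree $k+1$: this produces $m=2k+2$ points that are $CB(k)$, lie on an irreducible conic, and (since $m>4$) lie on no union of two lines and on no line.

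I expect Case~B to be the main obstacle. There the maximal number of collinear points of $\Gamma$ is too coarse an invariant by itself---for $\Gamma$ lying on an irreducible conic, every line meets $\Gamma$ in at most two points---so one must instead control the relative positions of several near-extremal lines and conics through $\Gamma$ without overcounting, distinguishing the cases where the optimal conic $Q_0$ is reducible or irreducible and bounding from below the sizes of the successive residual sets obtained by removing lines and conics. A secondary technical point, needed so that the statement makes sense in $\mathbb{P}^n$ for every $n$, is ruling out that $\Gamma$ spans a $\mathbb{P}^N$ with $N\ge 4$: this follows by slicing $\Gamma$ with a hyperplane through $N$ of its points in general linear position and applying Lemma~\ref{minumCB} repeatedly, which forces $m$ to be at least roughly $k+N$.
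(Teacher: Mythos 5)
First, note that the paper does not prove this proposition at all: it is quoted verbatim from Stapleton--Ullery, and the ``proof'' in the paper is simply the citation \cite[Theorem~1.9]{SU}. So there is no internal argument to compare yours against; your attempt has to stand on its own as a proof of a known but nontrivial result.

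As it stands it does not. Your Case~A is fine (if some line $L$ carries at least $m-2k+1$ points, then $\Gamma\setminus L$ is $CB(k-1)$ with at most $2(k-1)+1$ points and Proposition~\ref{allineati} finishes), but Case~B --- which is the entire content of the statement, since it covers the range $2k+2\le m\le \frac52k+1$ where $\Gamma$ is not forced onto a line or a pair of lines by elementary counting --- is only a plan. You write that you would ``apply the induction hypothesis to $\Gamma\setminus Q_0$ once it is small enough'' and ``combine the resulting bounds \dots so as to conclude,'' and you yourself flag that ``I expect Case~B to be the main obstacle.'' No inequality is actually derived there, and the obstacle is real: when $\Gamma$ lies on an irreducible conic every line meets it in at most $2$ points, so the collinearity statistics you set up in Case~B cannot by themselves distinguish the good configurations from the bad ones, and it is exactly this situation that makes the Stapelton--Ullery bound $\frac52k+1$ delicate (their proof goes through a genuinely different analysis). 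A secondary but genuine error: your justification for ``$\Gamma\setminus Q_0$ is $CB(k-2)$'' invokes your hypersurface-removal fact, but a conic in $\mathbb{P}^n$ with $n\ge 3$ is not a hypersurface; one must instead argue that the conic is cut out by quadrics (as in Remark~\ref{CBoutcurve} of the paper, or by intersecting its plane with a quadric) before removing it. Likewise the final reduction to $\Gamma$ spanning at most a $\mathbb{P}^3$ is asserted only ``roughly.'' In short, the skeleton is reasonable but the proof is missing precisely where the theorem is hard.
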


\begin{proof} See \cite[Theorem 1.9]{SU}. \end{proof}

The following proposition, that is crucial in Section 4, gives conditions under which a set of points lying on an integral curve can not be $CB(k)$ for some $k\geq 1$.

\begin{proposition}\label{LGreco}
Let $\Gamma=\{P_1,\dots,P_m\}\subset \mathbb{P}^n$ be a set of points. Let $C\subset \mathbb{P}^n$ be an integral curve passing through all the points of the set $\Gamma$ and with the dualizing sheaf $\omega_{C}$ invertible. Let $Z=P_1+\dots +P_m$ and $E=P_1+\dots +P_{m-1}$.  If $k\geq 1$ is an integer such that $H^1(C, \mathcal{O}_{C}(k)(-Z))=H^1(C, \mathcal{O}_{C}(k)(-E))=0$ and $H^0(\mathbb{P}^n,\mathcal{O}_{\mathbb{P}^n}(k))\to H^0(C,\mathcal{O}_C(k))$ is surjective, then $\Gamma$ can not be $CB(k)$.
\end{proposition}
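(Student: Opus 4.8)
The plan is to violate the Cayley--Bacharach condition at the last index $i=m$, namely to produce an effective divisor $D\in|\mathcal{O}_{\mathbb{P}^n}(k)|$ passing through $P_1,\dots,P_{m-1}$ but not through $P_m$. (We may assume $m\geq 1$, since the statement presupposes that $P_m$ exists.) The existence of such a $D$ contradicts the definition of $CB(k)$, so this suffices; in particular the whole argument is symmetric and it is enough to break the condition for this one choice of index.

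Everything reduces to the curve $C$. Since $H^0(\mathbb{P}^n,\mathcal{O}_{\mathbb{P}^n}(k))\to H^0(C,\mathcal{O}_C(k))$ is surjective by hypothesis, it is enough to find a section $s\in H^0(C,\mathcal{O}_C(k))$ vanishing at $P_1,\dots,P_{m-1}$ but not at $P_m$: lifting $s$ to a homogeneous form $F$ of degree $k$ on $\mathbb{P}^n$, the hypersurface $D=\{F=0\}$ is well defined (its restriction to $C$ equals $s\neq 0$, hence $F\neq 0$), it contains $P_1,\dots,P_{m-1}$, and it misses $P_m$. To produce $s$, I would use the two twisted structure sequences on the integral curve $C$,
\[
0\to\mathcal{O}_C(k)(-E)\to\mathcal{O}_C(k)\to\mathcal{O}_E(k)\to 0,\qquad 0\to\mathcal{O}_C(k)(-Z)\to\mathcal{O}_C(k)\to\mathcal{O}_Z(k)\to 0,
\]
where $\mathcal{O}_E(k)$ and $\mathcal{O}_Z(k)$ are the twisted structure sheaves of the reduced schemes $E$ and $Z$, of lengths $m-1$ and $m$ respectively. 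Passing to cohomology and invoking the hypotheses $H^1(C,\mathcal{O}_C(k)(-E))=H^1(C,\mathcal{O}_C(k)(-Z))=0$ yields
\[
h^0(C,\mathcal{O}_C(k)(-E))=h^0(C,\mathcal{O}_C(k))-(m-1),\qquad h^0(C,\mathcal{O}_C(k)(-Z))=h^0(C,\mathcal{O}_C(k))-m.
\]
Hence $h^0(C,\mathcal{O}_C(k)(-E))=h^0(C,\mathcal{O}_C(k)(-Z))+1\geq 1$, so the inclusion $H^0(C,\mathcal{O}_C(k)(-Z))\subsetneq H^0(C,\mathcal{O}_C(k)(-E))$ is strict. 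Any section of $\mathcal{O}_C(k)$ vanishing on $E$ but not lying in $H^0(C,\mathcal{O}_C(k)(-Z))$ is then the desired $s$: it vanishes at $P_1,\dots,P_{m-1}$ but, since $E=Z-P_m$, not at $P_m$. Combined with the reduction to $C$, this yields $D$ and finishes the proof.

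I do not expect a genuine obstacle here: this is essentially a cohomological bookkeeping lemma, and the role of the two hypotheses is exactly the one above — one $H^1$-vanishing pins down $h^0$ after imposing vanishing along $Z$, the other after imposing it along $E$, and together they force the gap between the two to be exactly $1$, which is what guarantees a section separating $P_m$ from $P_1,\dots,P_{m-1}$. The Gorenstein assumption (invertibility of $\omega_C$) is not used directly in this proof; it is recorded because in the applications the two vanishings will be checked via Serre duality on $C$. The only point to keep honest is that the section $s$ found on $C$ must extend to an actual degree-$k$ hypersurface of $\mathbb{P}^n$ that avoids $P_m$, and this is precisely what the surjectivity of $H^0(\mathbb{P}^n,\mathcal{O}_{\mathbb{P}^n}(k))\to H^0(C,\mathcal{O}_C(k))$ provides.
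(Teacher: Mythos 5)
Your proof is correct, and it takes a noticeably more elementary route than the paper's. The paper first invokes Riemann--Roch for generalized divisors on the Gorenstein curve $C$ (this is where the hypothesis that $\omega_C$ is invertible enters) to get $h^0(\mathcal{O}_C(k)(-Z))\neq h^0(\mathcal{O}_C(k)(-E))$ from the two $H^1$-vanishings, and then transfers this inequality to $\mathbb{P}^n$ by showing $H^1(\mathbb{P}^n,\mathcal{O}_{\mathbb{P}^n}(k)(-C))=0$ and identifying $H^0(C,\mathcal{O}_C(k)(-Z))$ and $H^0(C,\mathcal{O}_C(k)(-E))$ with the quotients of $H^0(\mathbb{P}^n,\mathcal{O}_{\mathbb{P}^n}(k)(-Z))$ and $H^0(\mathbb{P}^n,\mathcal{O}_{\mathbb{P}^n}(k)(-E))$ by $H^0(\mathbb{P}^n,\mathcal{O}_{\mathbb{P}^n}(k)(-C))$. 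You instead read the two $H^1$-vanishings through the restriction sequences to the reduced $0$-dimensional schemes $E$ and $Z$, obtaining the exact codimensions $m-1$ and $m$, hence a section of $\mathcal{O}_C(k)$ separating $P_m$ from the other points, and you lift that single section directly using the surjectivity hypothesis. The two arguments have the same engine (the $H^1$-vanishings force a codimension-one gap, and surjectivity moves the conclusion to $\mathbb{P}^n$), but yours buys two things: it bypasses Riemann--Roch entirely, and -- as you correctly observe -- it never uses the invertibility of $\omega_C$, so the Gorenstein hypothesis is not needed for the proposition itself (in the paper it is also what legitimizes the Riemann--Roch step, not only the later applications). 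Your version produces an explicit violating hypersurface rather than an abstract non-isomorphism of spaces of forms, which is arguably cleaner; the only care needed, which you address, is that $\mathcal{O}_C(-Z)$ is interpreted as the ideal sheaf of the reduced points so that $h^0(\mathcal{O}_Z(k))=m$ and $h^0(\mathcal{O}_E(k))=m-1$.
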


\begin{proof}
We denote $\mathbb{P}^n$ by $\mathbb{P}$. An integral curve $C$ with the dualizing sheaf invertible is Gorenstein  (cf. \cite{H1}, V ,  \S 9 ). Moreover, for any $0$-dimensional subscheme  $Z$  of length $d$ on the integral Gorenstein curve $C$, Riemann-Roch Theorem (cf. \cite{H86} Theorem 1.3) ensures that 
$$h^0(\mathcal{L}(Z))-h^1(\mathcal{L}(Z))=d+1-p_a$$
where $p_a$ is the arithmetic genus of $C$.

Thus, for $kH-Z$ and $kH-E$, we get
$$h^0(\mathcal{O}_{C}(k)(-Z))-h^1(\mathcal{O}_{C}(k)(-Z))=\text{deg}(kH-Z) +1 -p_a,$$
$$h^0(\mathcal{O}_{C}(k)(-E))-h^1(\mathcal{O}_{C}(k)(-E))=\text{deg}(kH-E) +1 -p_a.$$
By assumption, $h^1(\mathcal{O}_{C}(k)(-Z))=h^1(\mathcal{O}_{C}(k)(-E))=0$ and deg$(kH-E)>$deg$(kH-Z)$. Thus $h^0(\mathcal{O}_{C}(k)(-Z))\neq h^0(\mathcal{O}_{C}(k)(-E))$. 

We claim that $H^1(\mathbb{P},\mathcal{O}_{\mathbb{P}}(k)(-C))=0$. In order to see this, let us consider the exact sequence
$$0\to \mathcal{O}_{\mathbb{P}}(-C) \to \mathcal{O}_\mathbb{P} \to \mathcal{O}_C \to 0.$$
Twisting by $k$ and passing to cohomology, we get
$$0\to H^0(\mathbb{P},\mathcal{O}_{\mathbb{P}}(k)(-C)) \xrightarrow{\alpha} H^0(\mathbb{P},\mathcal{O}_\mathbb{P}(k)) \xrightarrow{\beta} H^0(C,\mathcal{O}_C(k)) \xrightarrow{\gamma}  H^1(\mathbb{P},\mathcal{O}_{\mathbb{P}}(k)(-C)) \to 0,$$
where the last element of the sequence is $0$ since $H^1(\mathbb{P},\mathcal{O}_\mathbb{P}(k))=0$ for any $k\geq 1$.
Thus $\gamma$ is surjective and Ker$\gamma=$Im$\beta$. Moreover, $\beta$ is surjective by assumption and hence Ker $\gamma=H^0(C,\mathcal{O}_C(k))$. Then the claim follows.

Let us now consider the following exact sequences
$$ 0\to \mathcal{O}_{\mathbb{P}}(-C) \to \mathcal{O}_{\mathbb{P}}(-Z) \to \mathcal{O}_{C}(-Z) \to 0,$$
$$ 0\to \mathcal{O}_{\mathbb{P}}(-C) \to \mathcal{O}_{\mathbb{P}}(-E) \to \mathcal{O}_{C}(-E) \to 0. $$
From these, we obtain
$$0\to H^0(\mathbb{P},\mathcal{O}_{\mathbb{P}}(k)(-C)) \xrightarrow{\alpha_Z} H^0(\mathbb{P},\mathcal{O}_{\mathbb{P}}(k)(-Z)) \xrightarrow{\beta_Z} H^0(C,\mathcal{O}_{C}(k)(-Z))  \to 0,$$
$$0\to H^0(\mathbb{P},\mathcal{O}_{\mathbb{P}}(k)(-C)) \xrightarrow{\alpha_E} H^0(\mathbb{P},\mathcal{O}_{\mathbb{P}}(k)(-E)) \xrightarrow{\beta_E} H^0(C,\mathcal{O}_{C}(k)(-E))  \to 0,$$
where the last element of both sequences is $H^1(\mathbb{P},\mathcal{O}_{\mathbb{P}}(k)(-C))=0$.\\

It follows that
$$H^0(C,\mathcal{O}_{C}(k)(-Z))\cong \frac{H^0(\mathbb{P},\mathcal{O}_{\mathbb{P}}(k)(-Z))}{H^0(\mathbb{P},\mathcal{O}_{\mathbb{P}}(k)(-C))}$$
and
$$H^0(C,\mathcal{O}_{C}(k)(-E))\cong \frac{H^0(\mathbb{P},\mathcal{O}_{\mathbb{P}}(k)(-E))}{H^0(\mathbb{P},\mathcal{O}_{\mathbb{P}}(k)(-C))}.$$
Thus $H^0(\mathbb{P},\mathcal{O}_{\mathbb{P}}(k)(-Z))\not\cong H^0(\mathbb{P},\mathcal{O}_{\mathbb{P}}(k)(-E))$, since we saw that $h^0(\mathcal{O}_{C}(k)(-Z))\neq h^0(\mathcal{O}_{C}(k)(-E))$. Therefore the set $\Gamma=\{P_1,\dots,P_m\}\subset \mathbb{P}^n$ does not satisfies the Cayley-Bacharach condition with respect to the complete linear system $|\mathcal{O}_{\mathbb{P}^n}(k)|$.
\end{proof}

The previous proposition has an abstract formulation that does not lend itself to a practical application. The next corollary achieves exactly this goal.

\begin{corollary}\label{CGreco}
Let $\Gamma=\{P_1,\dots,P_m\}\subset \mathbb{P}^n$ be a set of points and let $C\subset \mathbb{P}^n$ be a non degenerate integral complete intersection curve passing through all the points of the set $\Gamma$. If
$$i) \;\;\;   k> \frac{m+2p_a-2}{d} \hspace{1cm} \text{and} \hspace{1cm} ii) \;\;\;  k\geq d+1-n $$
where $d$ is the degree of the curve $C$ and $p_a$ its arithmetic genus, then $\Gamma$ can not be $CB(k)$. 
\end{corollary}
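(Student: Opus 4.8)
The plan is to check that the curve $C$ satisfies all the hypotheses of Proposition \ref{LGreco}; the corollary is then immediate, with conditions $i)$ and $ii)$ supplying exactly the two cohomological inputs needed there. Since $C$ is a complete intersection it is Gorenstein, so its dualizing sheaf $\omega_C$ is invertible and the standing hypothesis of Proposition \ref{LGreco} holds; moreover Riemann--Roch applied to $\omega_C$ on the integral Gorenstein curve $C$ (exactly as in the proof of Proposition \ref{LGreco}) gives $\deg\omega_C=2p_a-2$. Setting $Z=P_1+\dots+P_m$ and $E=P_1+\dots+P_{m-1}$ and using that $\mathcal{O}_C(1)$ has degree $d$, Serre duality on $C$ yields
$$H^1(C,\mathcal{O}_C(k)(-Z))\cong H^0\big(C,\omega_C\otimes\mathcal{O}_C(-k)(Z)\big)^{\vee},$$
and likewise with $E$ in place of $Z$, the two line bundles on the right having degrees $(2p_a-2)-kd+m$ and $(2p_a-2)-kd+(m-1)$. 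Condition $i)$, namely $kd>m+2p_a-2$, says exactly that the first of these degrees is negative, hence so is the (smaller) second; as $C$ is integral, a line bundle of negative degree has no nonzero global sections, so $H^1(C,\mathcal{O}_C(k)(-Z))=H^1(C,\mathcal{O}_C(k)(-E))=0$.

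It remains to verify that $H^0(\mathbb{P}^n,\mathcal{O}_{\mathbb{P}^n}(k))\to H^0(C,\mathcal{O}_C(k))$ is surjective. For this I would invoke the Gruson--Lazarsfeld--Peskine theorem (\cite{GLP}): the reduced, irreducible, non-degenerate curve $C\subset\mathbb{P}^n$ of degree $d$ has $(d-n+2)$-regular ideal sheaf, so $H^1(\mathbb{P}^n,\mathcal{I}_C(k))=0$ as soon as $k\geq d+1-n$, which is precisely condition $ii)$ (note this also forces $k\geq 1$, since a non-degenerate curve in $\mathbb{P}^n$ has degree at least $n$). Twisting the ideal-sheaf sequence $0\to\mathcal{I}_C\to\mathcal{O}_{\mathbb{P}^n}\to\mathcal{O}_C\to 0$ by $k$ and using this vanishing gives the desired surjectivity. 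With all the hypotheses of Proposition \ref{LGreco} in place, we conclude that $\Gamma$ cannot be $CB(k)$.

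There is no genuine obstacle here: the whole content lies in Proposition \ref{LGreco} and in the GLP regularity bound, and the corollary merely records that condition $i)$ is exactly what makes the two Serre-dual groups vanish while condition $ii)$ is exactly the GLP threshold for the vanishing of $H^1(\mathbb{P}^n,\mathcal{I}_C(k))$. The only point needing a little attention is to carry the divisor $E$, and not only $Z$, through the degree count --- which is harmless since $\deg E=m-1<m=\deg Z$.
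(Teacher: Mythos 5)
Your proposal is correct and follows essentially the same route as the paper: verify that $C$ satisfies the hypotheses of Proposition~\ref{LGreco} ($\omega_C$ invertible since $C$ is a complete intersection, condition $ii)$ giving the surjectivity of $H^0(\mathbb{P}^n,\mathcal{O}_{\mathbb{P}^n}(k))\to H^0(C,\mathcal{O}_C(k))$ via \cite{GLP}) and conclude. The only difference is that for the vanishing of $H^1(C,\mathcal{O}_C(k)(-Z))$ and $H^1(C,\mathcal{O}_C(k)(-E))$ you give a short direct Serre-duality/degree argument where the paper simply cites \cite[Theorem 1.5]{G}; your argument is sound (and extends to the case of singular points of $C$ via generalized divisors, since a nonzero map from a rank-one torsion-free sheaf into $\omega_C$ forces the degree inequality that condition $i)$ rules out).
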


\begin{proof}  If $C$ is a complete intersection, then $\omega_{C}$ is invertible (cf. \cite[Theorem III.7.11]{H2}). Let $Z=P_1+\dots +P_m$ and $E=P_1+\dots +P_{m-1}$. Condition i) implies $H^1(C, \mathcal{O}_{C}(k)(-Z))=H^1(C, \mathcal{O}_{C}(k)(-E))=0$ (cf. \cite[Theorem 1.5]{G}). Condition ii) implies $H^0(\mathbb{P}^n,\mathcal{O}_{\mathbb{P}^n}(k))\to H^0(C,\mathcal{O}_C(k))$ is surjective (cf. \cite[Corollary p.492]{GLP}). So the hypotheses of Proposition \ref{LGreco} are satisfied and therefore the thesis follows.
\end{proof}
\medskip

\section{Sets of points lying on curves of degree $2$ and degree $3$}

We recall that Lopez and Pirola proved in \cite[Lemma 2.5]{LP} that if a set of distinct points in $\mathbb{P}^n$ with $n\in\{2,3\}$ is $CB(k)$ and has cardinality at most $3k-1$, then it lies on a curve of degree $2$. If, instead, its cardinality is at most $4k-5$, then it lies on a curve of degree $3$. The two theorems we are going to prove in this section generalize this result to any $\mathbb{P}^n$.\\

Let us start with the first case.

\begin{theorem}\label{Th3}
Let $\Gamma = \{p_1,\dots,p_m\}\subset\mathbb{P}^n$ be a set of distinct points that satisfies the Cayley-Bacharach condition with respect to the complete linear system $|\mathcal{O}_{\mathbb{P}^n}(k)|$, with $k\geq 1$. If $m\leq 3k-1$, then $\Gamma$ lies on a reduced curve of degree $2$.
\end{theorem}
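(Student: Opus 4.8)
The plan is to prove Theorem \ref{Th3} by induction on $n$, the cases $n=2,3$ being exactly \cite[Lemma 2.5]{LP}. So fix $n\ge 4$, assume the statement in all smaller dimensions, and argue by contradiction: suppose $\Gamma$ is \emph{not} contained in any reduced curve of degree $2$. Then, by Proposition \ref{allineati}, $\Gamma$ is not contained in a line (so $m\ge 2k+2$); more importantly, $\Gamma$ cannot be contained in any hyperplane $H\cong\mathbb{P}^{n-1}$. Indeed, since the restriction map $H^0(\mathbb{P}^n,\mathcal{O}(k))\to H^0(H,\mathcal{O}_H(k))$ is surjective, a degree-$k$ hypersurface of $H$ through $\Gamma\setminus\{p_i\}$ lifts to one of $\mathbb{P}^n$ with the same property, so $\Gamma\subset H$ would be $CB(k)$ in $H$; as $|\Gamma|=m\le 3k-1$, the inductive hypothesis would then force $\Gamma$ onto a reduced curve of degree $2$ inside $H$, a contradiction. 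Hence $\Gamma$ spans $\mathbb{P}^n$.

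Next I would pass to a general projection. Let $p\in\mathbb{P}^n$ be general and $\pi_p\colon\mathbb{P}^n\dashrightarrow\mathbb{P}^{n-1}$ the projection from $p$. For general $p$, the point $p$ lies on no secant line of $\Gamma$, so $\pi_p$ is injective on $\Gamma$ and $\overline{\Gamma}:=\pi_p(\Gamma)$ consists of exactly $m$ points; moreover $\overline{\Gamma}$ spans $\mathbb{P}^{n-1}$, being the image of a spanning set. The crucial point is that $\overline{\Gamma}$ is again $CB(k)$: a hypersurface $\overline{X}\subset\mathbb{P}^{n-1}$ of degree $k$ through all points of $\overline{\Gamma}$ except $\pi_p(p_i)$ pulls back to the cone $\pi_p^{-1}(\overline{X})$, a degree-$k$ hypersurface of $\mathbb{P}^n$ through all points of $\Gamma$ except $p_i$; by $CB(k)$ of $\Gamma$ we get $p_i\in\pi_p^{-1}(\overline{X})$, hence $\pi_p(p_i)\in\overline{X}$. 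Since $|\overline{\Gamma}|=m\le 3k-1$, the inductive hypothesis yields a reduced curve $\overline{C}\subset\mathbb{P}^{n-1}$ of degree $2$ with $\overline{\Gamma}\subset\overline{C}$. But a reduced curve of degree $2$ spans at most a $\mathbb{P}^3$, while $\overline{\Gamma}$ spans $\mathbb{P}^{n-1}$; therefore $n-1\le 3$, i.e. $n=4$.

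It remains to exclude $n=4$. Now $\overline{C}$ spans $\mathbb{P}^3$, so it is a union $\ell_1\cup\ell_2$ of two skew lines, and hence $\Gamma\subset\pi_p^{-1}(\ell_1)\cup\pi_p^{-1}(\ell_2)=P_1(p)\cup P_2(p)$, a union of two $2$-planes through $p$; setting $A_p=\Gamma\cap P_1(p)$ and $B_p=\Gamma\setminus P_1(p)\subseteq P_2(p)$ gives a partition of $\Gamma$ depending on $p$. This holds for all $p$ in a dense open $U\subset\mathbb{P}^4$, and since there are only finitely many ways to split $\Gamma$ into two subsets and $U$ is irreducible, one fixed partition $\Gamma=A\sqcup B$ occurs for $p$ ranging over a dense subset of $U$, with $A$ contained in a $2$-plane through $p$ and $B$ in a $2$-plane through $p$. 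But for general $p$ a subset $A$ lies on a $2$-plane through $p$ only if $\dim\langle A\cup\{p\}\rangle\le 2$, i.e. (as $p\notin\langle A\rangle$) only if $\dim\langle A\rangle\le 1$, so $A$ lies on a line; likewise $B$. Thus $\Gamma$ lies on a union of two lines, a reduced curve of degree $2$, contradicting our assumption. This closes the induction.

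The routine ingredients here are the surjectivity of restriction of degree-$k$ forms to a hyperplane and the injectivity of a general projection on a finite set; the step demanding genuine care is checking that the projected configuration $\overline{\Gamma}$ remains $CB(k)$, together with the genericity bookkeeping in the $n=4$ case that upgrades ``for general $p$, $\Gamma$ splits over two $2$-planes through $p$'' to an honest splitting of $\Gamma$ over two lines. I expect that last passage to be the main obstacle.
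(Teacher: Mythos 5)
Your proof is correct, but it runs along a genuinely different axis than the paper's. The paper proves Theorem \ref{Th3} by induction on $k$: it fixes a $2$-plane $H$ containing the maximal number $\alpha$ of points of $\Gamma$, observes via Proposition \ref{pointsoutofCB} that $\Gamma\setminus H$ is $CB(k-1)$ with at most $3(k-1)-1$ points, applies the inductive hypothesis to put $\Gamma\setminus H$ on a reduced conic or two skew lines, and then runs a case analysis to reach the conclusion or a numerical contradiction. You instead induct on $n$, and your key lemma is that $CB(k)$ is preserved \emph{with the same $k$} under projection from a general point (the cone construction), which reduces everything to the base cases $n=2,3$ of \cite{LP} plus the single residual case $n=4$; the latter you handle correctly by noting that a reduced degree-$2$ curve spanning $\mathbb{P}^3$ is two skew lines, whose cones are $2$-planes, and then using the finiteness-of-partitions/irreducibility argument to extract a single partition of $\Gamma$ into two collinear subsets. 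Your route buys a cleaner argument for $h=3$ with essentially no case analysis, precisely because a reduced degree-$2$ curve spans at most $\mathbb{P}^3$ and its cone is a union of linear spaces; the trade-off is that it leans on \cite{LP} for both $n=2$ and $n=3$ and does not scale to Theorems \ref{Th4} and \ref{Th5}, where the residual curves span more and their cones are no longer unions of planes, whereas the paper's $k$-induction is the template reused there. One small housekeeping point: the paper dispatches $k=1$ (where $m\le 2$ is trivial) and $k=2$ (via \cite{SU}) before invoking \cite[Lemma 2.5]{LP}; you should do the same, or verify that the cited lemma covers all $k\ge 1$ in dimensions $2$ and $3$, since your induction on $n$ keeps $k$ fixed and bottoms out there.
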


\begin{proof}
If $k=1$, then $m\leq 2$ and the theorem is trivial. If $k=2$, then $m\leq \frac{5}{2}\cdot 2 + 1$ so the theorem follows from \cite[Theorem 1.9]{SU} (cf. Remark \ref{SUresult}). If $n=2$, the assertion is included in \cite[Lemma 2.5]{LP}. We can therefore suppose $k\geq 3$ and $n\geq 3$. Let us define
\begin{equation}\label{alpha3}
\alpha:=\text{max number of points of }\Gamma \text{ lying on a same } 2\text{-plane}.
\end{equation}
Obviously $\alpha \geq 3$. Let us fix a plane $H$ that contains $\alpha$ points of $\Gamma$. If $\alpha=m$, this is equivalent to the case $n=2$. Hence we may assume $\alpha<m$ and we have $m-\alpha$ points of $\Gamma$ outside from $H$. Let us denote by $\Gamma'$ the set of these points. The set $\Gamma'$ is $CB(k-1)$ by Proposition \ref{pointsoutofCB} and then $m-\alpha\geq k+1$ by Remark \ref{allineati}. Furthermore, $m-\alpha\leq 3k-1-\alpha\leq 3k-1-3=3(k-1)-1$ and so, by induction on $k$, $\Gamma'$ lies on a reduced curve of degree $2$, i.e. either on a reduced conic or on two skew lines.

If $\Gamma'$ lies on a reduced conic, it lies on a plane. Thus, by definition of $\alpha$, it must be $m-\alpha\leq \alpha$, i.e. $\alpha\geq \frac{m}{2}$. We have $m-\alpha\leq \frac{m}{2}\leq\frac{3k-1}{2}\leq2(k-1)+1$, so $\Gamma'$ lies on a line $\ell$ (by Remark \ref{allineati}). Hence, by Lemma \ref{minumCB}, $\ell$ contains at least $k+1$ points of $\Gamma$ and so $|\Gamma \backslash \Gamma'|\leq m-k-1\leq 3k-1-k-1<2(k-1)+1$. Since by Proposition \ref{pointsoutofCB} also the set $\Gamma \backslash \Gamma'$ is $CB(k-1)$, it follows that $\Gamma \backslash \Gamma'$ lies on a line $\ell'$, too. So the whole $\Gamma$ lies on $\ell\cup\ell'$.

Let us suppose now that $\Gamma'$ lies on two skew lines $\ell_1$ and $\ell_2$. If one of the two lines does not intersect $\Gamma'$, then $\Gamma'$ lies on a plane and we argue as in the previous case. Moreover, since $\Gamma'$ is $CB(k-1)$, then $\Gamma_1=\Gamma'\cap\ell_1$ and $\Gamma_2=\Gamma'\cap\ell_2$ are $CB(k-1)$ (always by Proposition \ref{pointsoutofCB}). Thus Remark \ref{allineati} yields $q_i:=|\Gamma_i|\geq k$ for $i=1,2$. But, by definition, $\alpha>\max\{q_1,q_2\}\geq k$. Therefore we have $3k<\alpha+q_1+q_2=m\leq 3k-1$, which is impossible. Hence this configuration does not occur.
\end{proof}

By a similar argument, but with more cases to analyse, we get the the second result.

\begin{theorem}\label{Th4}
Let $\Gamma = \{p_1,\dots,p_m\}\subset\mathbb{P}^n$ be a set of distinct points that satisfies the Cyaley-Bacharach condition with respect to the complete linear system $|\mathcal{O}_{\mathbb{P}^n}(k)|$, with $k\geq 1$. If $m\leq 4k-5$, then $\Gamma$ lies on a reduced curve of degree $3$.
\end{theorem}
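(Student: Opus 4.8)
The plan is to mimic the structure of the proof of Theorem \ref{Th3}, running an induction on $k$ and on $n$, and peeling off a maximal planar (or curve-) subconfiguration at each step. First I would dispose of the base cases: if $k$ is small the bound $m\le 4k-5$ forces $m$ to be small enough that the conclusion is trivial or follows from Theorem \ref{Th3} or Proposition \ref{SUresult} (for instance $k\le 4$ will be handled this way, since then $4k-5\le \tfrac{5}{2}k+1$ fails only mildly and one checks the residual cases by hand), and if $n=2,3$ the statement is exactly \cite[Lemma 2.5]{LP}. So I may assume $k$ large and $n\ge 3$; in fact, as in Theorem \ref{Th3}, I would try to reduce to $n=3$ by the same $\alpha$-argument, where now $\alpha$ is the maximal number of points of $\Gamma$ on a $2$-plane (or one could instead take $\alpha$ to be the maximal number on a $3$-space and reduce dimension one step at a time). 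The point is that the residual set $\Gamma'=\Gamma\setminus H$ is $CB(k-1)$ by Proposition \ref{pointsoutofCB}, has cardinality $m-\alpha\le 4(k-1)-5$ once $\alpha\ge 4$ (which holds since $\alpha\ge 3$ and the genuinely three-dimensional situation forces $\alpha\ge 4$, otherwise $\Gamma$ would be planar), and by induction on $k$ lies on a reduced curve $C'$ of degree $\le 3$.

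Next I would analyze the reduced degree-$\le 3$ curve $C'$ supporting $\Gamma'$ according to its components: $C'$ is a union of lines and conics, so the possibilities are a plane cubic, a twisted cubic, a conic plus a line, three lines (coplanar, or two of them meeting, or a ``triangle'', or a chain, or with a skew pair), a line plus a conic meeting or not, two lines, one line. In each case I would use the $CB$-property: by Proposition \ref{pointsoutofCB} the intersection of $\Gamma'$ with any single component (or with the complement of the union of the other components, viewed as a plane configuration) is again $CB$ of an appropriately reduced level, and by Lemma \ref{minumCB} any non-empty such trace on a line contains $\ge k$ (more precisely, $\ge(k-r)+2$) points, on a conic $\ge$ a corresponding bound, etc. Counting these forced contributions against $m\le 4k-5$ and against the maximality of $\alpha$ will, in most cases, either pin down the configuration or yield a numerical contradiction. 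When $\Gamma'$ lies in a plane I would argue as in Theorem \ref{Th3}: then $m-\alpha\le\alpha$, so $\alpha\ge m/2$, the residual set is small and lies on a curve of low degree, and one reassembles $\Gamma$ on a reduced cubic. When $\Gamma'$ is genuinely non-planar — say it lies on a twisted cubic, or on skew lines — I would bring in Corollary \ref{CGreco} (for the twisted cubic, which is a non-degenerate complete intersection of two quadrics in $\bP^3$, with $p_a=0$, $d=3$) to get that a $CB(k)$ set on it cannot be too large, hence cannot account for all of $\Gamma$ while leaving $\alpha$ points off it, again producing a contradiction with $m\le 4k-5$; for unions of skew lines I would reuse the Lemma \ref{minumCB} counting exactly as in the last paragraph of the proof of Theorem \ref{Th3}.

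The main obstacle I anticipate is the bookkeeping in the case $n=3$ with $C'$ having several one-dimensional components in special position — e.g. three lines forming a triangle, or a conic meeting a line — because there the maximality of $\alpha$ does not immediately bound the number of non-planar points, and one has to combine the $CB$-descent on each component with the constraint that the ambient span is all of $\bP^3$. I expect that the right way through is: let $C$ be the (reduced) curve obtained by adding to $C'$ the plane curve $H\cap(\text{span of the }\alpha\text{ points})$ on which the $\alpha$ planar points lie — so a priori $\Gamma$ lies on a reduced curve of degree $\le 3+\deg(\text{planar part})$ — and then show, component by component, that the sum of the degrees of those components actually meeting $\Gamma$ is at most $3$, discarding the rest. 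Concretely, if a component $C_i$ meets $\Gamma$, then $\Gamma\cap C_i$ is $CB(k-(\deg C-\deg C_i))$ by Proposition \ref{pointsoutofCB}, so Lemma \ref{minumCB} forces $|\Gamma\cap C_i|\ge k-(\deg C-\deg C_i)+2$; summing over all such components and comparing with $m\le 4k-5$ forces $\deg C-\deg C_i$ to be small for each of them, i.e. few components can meet $\Gamma$, and a finite case check (helped by Theorem \ref{Th3} to rule out degree-$2$ sub-situations that should have been caught earlier, and by Corollary \ref{CGreco} to rule out the twisted cubic carrying too many points) finishes it. The rest of the argument is then routine, parallel to the proof of Theorem \ref{Th3}.
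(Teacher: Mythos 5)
Your overall route is the paper's: induct on $k$, treat $n=2,3$ via \cite[Lemma 2.5]{LP} and $k\le 4$ via Theorem \ref{Th3} (indeed $4k-5\le 3k-1$ there), peel off a maximal linear section $H$, observe that the residual set $\Gamma'$ is $CB(k-1)$ of cardinality $\le 4(k-1)-5$ and hence lies on a reduced cubic by induction, and then do a component-by-component analysis using Proposition \ref{pointsoutofCB}, Lemma \ref{minumCB}, Proposition \ref{allineati} and the maximality of $\alpha$. The case analysis you sketch (irreducible cubic; conic plus line; three lines) is exactly the paper's, and the counting you describe is the right mechanism.

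Two concrete points need fixing. First, you must take $\alpha$ to be the maximal number of points of $\Gamma$ on a \emph{$3$-space}, not a $2$-plane (you mention both). With the $2$-plane version your claim that $\alpha\ge 4$ ``since otherwise $\Gamma$ would be planar'' is false (points in general position have $\alpha=3$), and without $\alpha\ge 4$ the inequality $m-\alpha\le 4(k-1)-5$ driving the induction fails; moreover, an irreducible space cubic is not planar, so the key step $m-\alpha\le\alpha$ in that case only works if $\alpha$ is measured on $3$-spaces. With the $3$-space definition, $\alpha\ge 4$ is automatic and the irreducible-cubic case collapses: $\alpha\ge m/2$ forces $m-\alpha<2(k-1)+1$, so $\Gamma'$ is on a line, and then $\Gamma\setminus\Gamma'$ is a $CB(k-1)$ set of size $<3(k-1)-1$ lying on a conic by Theorem \ref{Th3}. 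Second, your fallback for the twisted cubic via Corollary \ref{CGreco} does not work as stated: the twisted cubic is \emph{not} a complete intersection of two quadrics (two quadrics in $\mathbb{P}^3$ meet in a degree-$4$ curve), so the hypothesis of that corollary fails. Fortunately this detour is unnecessary once $\alpha$ is defined on $3$-spaces, as just explained; no cohomological input (Proposition \ref{LGreco} or Corollary \ref{CGreco}) is needed anywhere in the proof of this theorem.
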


\begin{proof}
 If $n=2$ or $n=3$ the theorem is proved in \cite[Lemma 2.5]{LP}. Moreover, if $k\leq 4$ we have $m\leq 4k-5\leq 3k-1$ so the theorem follows by Theorem \ref{Th3}. Let us suppose $n\geq 4$ and $k\geq 5$ and let us define
\begin{equation}\label{alpha4}
\alpha:=\text{max number of points of }\Gamma \text{ lying on a same } 3\text{-plane}.
\end{equation}
Obviously $\alpha \geq 4$. Let us fix a linear space $H$ of dimension $3$ that contains $\alpha$ points of $\Gamma$. If $\alpha=m$ we conclude by the assertion for $n=3$. So we assume that $\alpha< m$ and we have $m-\alpha$ points of $\Gamma$ not lying on $H$. Let us denote by $\Gamma'$ the set of these points. The set $\Gamma'$ is $CB(k-1)$ by Proposition \ref{pointsoutofCB} and then $m-\alpha\geq k+1$ by Remark \ref{allineati}. Furthermore, $m-\alpha\leq 4k-5-\alpha\leq 4(k-1)-5$ and so, by induction on $k$, $\Gamma'$ lies on a reduced curve of degree $3$. We have the following three possibilities.

(a) The set $\Gamma'$ lies on a irreducible cubic curve, so in particular it lies on a $\mathbb{P}^3$. Thus, by definition of $\alpha$, it must be $m-\alpha\leq \alpha$ , i.e. $\alpha\geq \frac{m}{2}$. We have $m-\alpha\leq \frac{m}{2}\leq 2k-\frac{5}{2}<2(k-1)+1$, so $\Gamma'$ lies on a line $\ell$ (by Remark \ref{allineati}). Thus, by Lemma \ref{minumCB}, $\ell$ contains at least $k+1$ points of $\Gamma$ and so $|\Gamma \backslash \Gamma'|\leq m-k-1\leq 4k-5-k-1<3(k-1)-1$. Since by Proposition \ref{pointsoutofCB} also the set $\Gamma \backslash \Gamma'$ is $CB(k-1)$, it follows that $\Gamma \backslash \Gamma'$ lies on a reduced curve $C_2$ of degree $2$ by Theorem \ref{Th3}. So the whole $\Gamma$ lies on $C_2\cup\ell $, a curve of degree $3$.

(b) The set $\Gamma'$ lies on $C\cup\ell$, where $C$ is an irreducible conic and $\ell$ is a line. If either $\Gamma'\cap \ell=\emptyset$ or $\Gamma'\cap C=\emptyset$, then the set $\Gamma'$ lies on a $3$-plane and we are in the case $n=3$. The same holds if $C\cap\ell\neq\emptyset$. Thus we assume that $q_1:=|\Gamma'\cup\ell|\neq 0$, $q_2:=|\Gamma'\cap C|\neq 0$ and $q_1+q_2=m-\alpha$. By definition $\alpha\geq\max\{q_1+2,q_2+1\}$. Moreover, since $\Gamma'$ is $CB(k-1)$, $\Gamma'\cap\ell$ and $\Gamma'\cap C$ are both $CB(k-2)$ (by Proposition \ref{pointsoutofCB}) and thus $q_i\geq k$ for $i=1,2$, we have that $m-\alpha=q_1+q_2\geq 2k$. Therefore $\alpha\leq m-2k\leq 4k-5-2k<2(k-2)+1$ and, by Remark \ref{allineati}, it follows that $\Gamma\backslash \Gamma'$ lies on a line. But we have also that $\alpha\geq k+2$ and then $q_2=m-\alpha-q_1\leq 4k-5-k-2-k<2(k-2)+1$. Thus also $\Gamma'\cap C$ lies on a line. In conclusion, the set $\Gamma$ lies on three lines.

(c) The set $\Gamma'$ lies on the union of three distinct lines $\ell_1,\ell_2,\ell_3$. If $\Gamma'$ does not intersect one of the lines $\ell_i$, then $\Gamma'$ lies on a $3$-plane and we are in the case $n=3$. The same holds if any $\ell_i$ intersects at least one of the others two lines. Thus $q_i:=|\Gamma'\cap \ell_i|\neq 0$, for $i=1,2,3$, and $m-\alpha\in\{q_1+q_2+q_3, q_1+q_2+q_3-1\}$. Moreover, any $\Gamma'\cap \ell_i$ is $CB(k-3)$ by Proposition \ref{pointsoutofCB}, so $q_i\geq k-1$ for any $i=1,2,3$. But, by definition, $\alpha>q_i$  for any $i=1,2,3$, thus $\alpha>k-1$. Then we have $4k-5<\alpha+q_1+q_2+q_3-1\leq m\leq 4k-5$, a contradiction. Hence configuration (c) does not occur.
\end{proof}
\bigskip

\section{Sets of points lying on curves of degree $4$}

Aim of this section is to prove the following theorem, that is the main result of this paper.

\begin{theorem}\label{Th5}
Let $\Gamma = \{p_1,\dots,p_m\}\subset\mathbb{P}^n$ be a set of distinct points that satisfies the Caley-Bacharach condition with respect to the complete linear system $|\mathcal{O}_{\mathbb{P}^n}(k)|$, with $k\geq 1$. If $m\leq 5k-11$, then $\Gamma$ lies on a reduced curve of degree $4$.
\end{theorem}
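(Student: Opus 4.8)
\textbf{Proof plan for Theorem \ref{Th5}.}
The plan is to mimic the inductive scheme used for Theorems \ref{Th3} and \ref{Th4}, but since the base cases $n=2$ (resp. $n=3,4$) are \emph{not} fully covered by \cite[Lemma 2.5]{LP}, the argument must handle low-dimensional ambient spaces separately, as announced in the introduction. First I would dispose of the trivial ranges: if $k\leq 5$ then $m\leq 5k-11\leq 4k-5$ and we conclude by Theorem \ref{Th4}; likewise the case $n=2$ and $n=3$ requires its own treatment, after which one proceeds by induction on $n$. For the inductive step, set
\begin{equation*}
\alpha:=\text{max number of points of }\Gamma\text{ lying on a same }4\text{-plane},
\end{equation*}
fix a linear $4$-space $H$ realizing $\alpha$, and let $\Gamma'=\Gamma\setminus H$. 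If $\alpha=m$ we are in the previously settled low-dimensional case; otherwise $\Gamma'$ is $CB(k-1)$ by Proposition \ref{pointsoutofCB}, hence $m-\alpha\geq k+1$ by Lemma \ref{minumCB} and Proposition \ref{allineati}, and $m-\alpha\leq 5k-11-\alpha\leq 5(k-1)-11$ because $\alpha\geq 5$; so by induction on $k$ the set $\Gamma'$ lies on a reduced curve $C'$ of degree $4$.

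The heart of the proof is the case analysis on the decomposition of the reduced quartic $C'$ into irreducible components: a line and an irreducible cubic; a line and two conics-or-lines configuration; two conics; four lines; an irreducible quartic; a conic plus two lines; etc. For each configuration one argues exactly as in Theorem \ref{Th4}: whenever the components through $\Gamma'$ all lie in a common $4$-plane (in particular when one component misses $\Gamma'$, or when two components meet), we fall back to the already-proved low-dimensional case; otherwise the distinct components are pairwise skew, each $\Gamma'\cap C_i'$ is $CB(k-r)$ where $r$ is the number of components used (Proposition \ref{pointsoutofCB}), forcing a lower bound $|\Gamma'\cap C_i'|\geq k+2-(\deg C_i'+r-1)$ or similar, which summed over components and added to $\alpha> \max_i|\Gamma'\cap C_i'|$ contradicts $m\leq 5k-11$. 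The subtler branches are those where $C'$ contains a low-degree component containing many points: there, after showing $\Gamma'$ actually lies on a line $\ell$ with $\geq k+1$ points of $\Gamma$, one removes $\ell$, notes $\Gamma\setminus\ell$ is $CB(k-1)$ with at most $5k-11-(k+1)<4(k-1)-5$ points, applies Theorem \ref{Th4} to place it on a reduced cubic, and concludes that $\Gamma$ lies on that cubic union $\ell$, a reduced quartic. A symmetric bootstrap reduces the case "$\Gamma'$ on an irreducible cubic plus a line" to the same pattern.

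For the base cases $n=3$ and $n=4$ — which, as the introduction warns, must be proved separately — the strategy is the one sketched there: extend the argument of \cite{LP} to show first that $\Gamma$ lies on a reduced curve $C$ of degree at most $9$ (using that $\Gamma$ is not contained in a curve of degree $4$ for a contradiction, together with a counting/liaison argument on the ideal of $\Gamma$), then run the component analysis on $C$. Here the key inputs are Corollary \ref{CGreco} (to rule out too many points on a low-genus complete-intersection component when $k$ is large), \cite[Theorem 1.5]{G} and the result of \cite{GLP} on the regularity of curves, plus the standard $CB$ bookkeeping of \cite{LU}; the goal is to show that the sum of the degrees of those $C_i$ with $C_i\cap\Gamma\neq\emptyset$ is at most $4$. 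The main obstacle I anticipate is precisely the combinatorial explosion of subcases in the reducible-quartic analysis together with the $n=3,4$ base cases: one must keep track of intersection points between components (which shift the point-count inequalities by $1$) and of exactly how many positive-dimensional linear spaces are removed (to get the right drop $k\mapsto k-r$), and verify that in every configuration the numerical inequality $m\leq 5k-11$ is violated or the low-dimensional reduction applies. I would organize this by first listing all partitions of $4$ into component-degrees, handling the "all components coplanar in a $\bP^4$" reduction uniformly, and then treating the genuinely multi-component skew configurations by the summation argument, reserving the delicate bootstrap (peeling off a line with $\geq k+1$ points and invoking Theorem \ref{Th4}) for the cases with a linear component carrying most of $\Gamma$.
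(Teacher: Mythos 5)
Your overall architecture matches the paper's: dispose of small $k$ via Theorem \ref{Th4}, remove a maximal linear section $H$ and apply induction on $k$ to $\Gamma'=\Gamma\setminus H$ (which is $CB(k-1)$ with at most $5(k-1)-11$ points), and treat low-dimensional ambient spaces by putting $\Gamma$ on a complete-intersection curve of degree $8$ or $9$ and running a component analysis with Corollary \ref{CGreco}, \cite{G} and \cite{GLP}. There is, however, one genuine gap. You present $n=3,4$ as self-contained base cases settled by the degree-$\leq 9$ curve argument. That argument does not work unconditionally: it needs an a priori bound on how many points of $\Gamma$ lie on low-dimensional linear subspaces (in the paper, $\alpha\leq 4$ where $\alpha$ is the maximal number of points on a hyperplane). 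If, say, the $18$ chosen points in $\mathbb{P}^3$ were coplanar, any cubic surface through them would contain that plane and the "complete intersection" would not be a curve; and every one of the component lemmas (no component of degree $\geq 5$, no configuration of only lines and conics, no two cubics, etc.) leans on "at most $3$ points of $\Gamma$ on a line, at most $4$ on a plane". So the $n=3,4$ case splits into $\alpha\leq 4$ (handled by the curve argument) and $\alpha\geq 5$, and the latter must be absorbed into the very same hyperplane-removal induction on $k$ you use for $n\geq 5$; the "base case" and the "inductive step" interleave through the induction on $k$ rather than being independent. As written, your plan leaves $n=3$ or $4$ with a large collinear or coplanar subset uncovered.

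A secondary remark, a difference rather than an error: in the inductive step you propose to redo the full Theorem-\ref{Th4}-style case analysis on the components of $C'$, summing lower bounds for $|\Gamma'\cap C_i'|$ against $\alpha$. The paper uses the decomposition of $C'$ only to show that $\Gamma_H=\Gamma\cap H$ is $CB(k-s)$ with $s\leq 4$, and then concludes by a short numerical dichotomy on $m-\alpha$ via Theorems \ref{Th3} and \ref{Th4}, ending with exactly the line-peeling bootstrap you describe. Note that in either version, when $C'$ has an irreducible non-degenerate component of degree $3$ or $4$ you cannot get a $CB$ level for the residual points from Proposition \ref{pointsoutofCB} alone (it only removes linear spaces); you need the \cite{GLP} regularity statement (Remark \ref{CBoutcurve}) already in this step, not only in the low-dimensional analysis.
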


This theorem generalizes to any $\mathbb{P}^n$ the analogue result that Lopez and Pirola proved in \cite[Lemma 2.5]{LP} just in $\mathbb{P}^2$. We point out that, in contrast with the theorems of the previous section, in this case the assertion have to be proved even in $\mathbb{P}^3$.

\begin{remark}\label{katleast7}
If $k<7$, we have $5k-11<4k-5$, so, by Theorem \ref{Th4}, $\Gamma$ lies on a reduced curve of degree $3$. Moreover, for $k=7$ we have $4\cdot 7-5=23$ and $5\cdot 7-11=24$. Thus for $k=7$ Theorem \ref{Th5} has to be proved only in the case $m=24$.
\end{remark}

Based on the previous remark, we can just focus on the case $k\geq 7$.\\

In order to prove Theorem \ref{Th5}, we need to deal with some particular configurations for which the general approach does not work. Let $\Gamma = \{p_1,\dots,p_m\}\subset\mathbb{P}^n$ be as in Theorem \ref{Th5} and let us define
\begin{equation}\label{alpha}
\alpha:=\text{max number of points of }\Gamma \text{ lying on a same hyperplane}.
\end{equation}

In the following subsection we prove Theorem \ref{Th5} in $\mathbb{P}^3$ and $\mathbb{P}^4$ and for values of $\alpha$ at most $4$. In the next one we complete the proof for all the remaining cases.

\subsection{Lower dimensions and $\alpha\leq 4$}

Goal of this subsection is proving the following proposition.

\begin{proposition}\label{Th5n34}
Theorem \ref{Th5} holds in $\mathbb{P}^3$ and $\mathbb{P}^4$ when $\alpha$, defined as in (\ref{alpha}), is at most $4$.
\end{proposition}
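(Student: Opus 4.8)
The plan is to carry over to $\mathbb{P}^3$ and $\mathbb{P}^4$ the strategy of Lopez and Pirola. By Remark \ref{katleast7} we may assume $k\geq 7$, so Lemma \ref{minumCB} gives $m\geq k+2\geq 9$; and if $m\leq 4k-5$ then Theorem \ref{Th4} already provides a reduced curve of degree $3$ through $\Gamma$, hence (adjoining a general line) one of degree $4$. So we may assume $4k-4\leq m\leq 5k-11$, which in particular forces $m=24$ when $k=7$.

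\textbf{Step 1 ($\Gamma$ lies on a reduced curve $C$ of degree $\leq 9$).} Since $\Gamma$ is $CB(k)$ it is also $CB(j)$ for every $j\leq k$, hence it fails to impose independent conditions on $|\mathcal{O}_{\mathbb{P}^n}(j)|$ whenever $h^0(\mathcal{O}_{\mathbb{P}^n}(j))\geq m$ (cf.\ \cite{LU}); combining this with Macaulay's growth estimate for the Hilbert function of $\Gamma$, one forces $\Gamma$ to lie on hypersurfaces of small degree, and then produces a positive-dimensional linear system of such hypersurfaces through $\Gamma$. The hypothesis $\alpha\leq 4$ is decisive here: since $m\geq 24$ but at most $4$ points of $\Gamma$ lie on any hyperplane, the fixed part of such a system can have no hyperplane component (a hyperplane would absorb $\leq 4$ points and the residual hypersurface the remaining $\geq 20$, and one iterates), so, discarding the fixed part, two general members cut out a complete intersection curve containing $\Gamma$. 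The longer such curves are excluded by Corollary \ref{CGreco} applied to all of $\Gamma$ --- their arithmetic genus is too large for condition i) to fail in our range of $k$ --- so $\Gamma$ lies either on an integral complete intersection curve of degree $\leq 4$ (which already finishes the proof) or on a reduced curve $C$ with $\deg C\leq 9$. In $\mathbb{P}^4$ one may alternatively project $\Gamma$ generically to $\mathbb{P}^3$, the projection being injective on $\Gamma$ exactly because $\alpha\leq 4$.

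\textbf{Step 2 (the components of $C$ that meet $\Gamma$ have total degree $\leq 4$).} Write $C=C_1\cup\dots\cup C_r$ with the $C_i$ distinct and integral and let $C'=\bigcup_{C_i\cap\Gamma\neq\emptyset}C_i$; thus $\Gamma\subseteq C'$ and $\deg C'\leq 9$. Every planar component of $C'$ --- a line, a conic, a plane cubic, or a plane quartic --- meets $\Gamma$ in at most $\alpha\leq 4$ points, so $C'$ cannot have many planar components; and because $\alpha\leq 4$ no hyperplane contains many of the $P_i$, the components of $C'$ meet each other in few points, which keeps the arithmetic genus $p_a(C')$ small and makes $C'$ Gorenstein. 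One then applies Proposition \ref{LGreco} to $\Gamma\subseteq C'$ (or Corollary \ref{CGreco} when $C'$ is a complete intersection, e.g.\ an elliptic quartic): the $H^1$-vanishing hypotheses follow from \cite[Theorem 1.5]{G} and the surjectivity hypothesis from \cite{GLP}, using $k\geq 7$ and $\deg C'\leq 9$, so that $\deg C'\geq 6$ would force $\Gamma$ not to be $CB(k)$, a contradiction. Hence $\deg C'\leq 5$; peeling line components with Proposition \ref{pointsoutofCB} to lower $k$ when convenient, and running through the few remaining patterns (a space cubic with one or two lines, a space quartic with a line, two conics, five lines, and so on), one rules out degree $5$ as well and concludes $\deg C'\leq 4$. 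Therefore $\Gamma$ lies on the reduced curve $C'$ of degree $\leq 4$, which is Theorem \ref{Th5}, hence Proposition \ref{Th5n34}, in this case.

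\textbf{Main obstacle.} The hard part is Step 2. In Theorems \ref{Th3} and \ref{Th4} one has $\deg C\leq 3$ with a single non-linear component, a conic already controlled by the elementary lemmas; here $\deg C$ can be as large as $9$ with many possible component patterns, and the non-linear part cannot simply be removed via Proposition \ref{pointsoutofCB}, so excluding it genuinely requires the $H^1$-vanishing of \cite[Theorem 1.5]{G} and the $k$-normality bound of \cite{GLP} packaged in Proposition \ref{LGreco} and Corollary \ref{CGreco}, together with a careful and repeated use of $\alpha\leq 4$ both to kill planar sub-configurations and to keep the relevant arithmetic genera small. Step 1 is comparatively soft, but still needs attention in the reducible case to guarantee that the hypersurface one lands on is irreducible of bounded degree.
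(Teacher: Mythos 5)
Your overall architecture (first trap $\Gamma$ on a low-degree curve, then show that the components meeting $\Gamma$ have total degree at most $4$ via the $H^1$-vanishing of Greco and the regularity bound of Gruson--Lazarsfeld--Peskine) is the same as the paper's, but Step 1 as you argue it has a genuine gap. Your route to a curve of degree $\leq 9$ through $\Gamma$ is: $\Gamma$ is $CB(j)$ for all $j\leq k$, hence fails to impose independent conditions whenever $h^0(\mathcal{O}_{\mathbb{P}^n}(j))\geq m$, hence lies on low-degree hypersurfaces. But for large $k$ one has $m=5k-11\gg h^0(\mathcal{O}_{\mathbb{P}^3}(3))=20$, so this mechanism only produces hypersurfaces of degree $j\sim k^{1/3}$ through $\Gamma$, and the resulting complete intersection has unbounded degree; the appeal to Macaulay growth does not repair this, and the ``no hyperplane in the fixed part'' observation is beside the point. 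What is actually needed (and what the paper does in Lemma \ref{curve9}) is to take the complete intersection $C$ of two cubics through a chosen subset of only $18$ points of $\Gamma$ (resp.\ a quadric and a quartic through $9$ points when $k=7$, three quadrics through $12$ points in $\mathbb{P}^4$), and then prove that the residual set $\Gamma'=\Gamma\setminus C$ is empty: $\Gamma'$ is $CB(k-3)$ (resp.\ $CB(k-2)$) of cardinality at most $5(k-3)-11$, so by \emph{induction on $k$ in the statement of Theorem \ref{Th5} itself} it lies on a degree-$4$ curve, and a further case analysis using Remarks \ref{notCB} and \ref{CBoutcurve} yields a contradiction. This induction on $k$ is the missing idea in your Step 1; without it there is no bound on $\deg C$. (Your alternative for $\mathbb{P}^4$ via generic projection also does not close the gap: the preimage of a degree-$4$ curve in $\mathbb{P}^3$ is a cone, a surface, so nothing is gained without further work.)

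In Step 2 you are closer to the paper, but note that Proposition \ref{LGreco} and Corollary \ref{CGreco} are stated for \emph{integral} curves (Corollary \ref{CGreco} moreover for integral non-degenerate complete intersections), so you cannot apply them directly to the reducible curve $C'$ to exclude $\deg C'\geq 6$ wholesale; the hypotheses on $\omega_{C'}$, on the $H^1$-vanishing, and on $k$-normality are not available for an arbitrary reducible $C'$. The paper instead applies Corollary \ref{CGreco} only to single integral components of degree $\geq 5$ (after controlling, via Proposition \ref{pointsoutofCB} and Remark \ref{CBoutcurve}, the Cayley--Bacharach level of $\Gamma$ restricted to that component), and disposes of the purely low-degree configurations (lines, conics, two cubics, a cubic plus a quartic, etc.) by the counting arguments of Lemmas \ref{nolineconics}--\ref{not34}. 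Your phrase ``running through the few remaining patterns'' hides essentially all of this work, including the delicate boundary case $k=7$, $m=24$.
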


We start with some remarks.

\begin{remark}\label{pointsonline}
Condition $\alpha\leq 4$ implies that in $\mathbb{P}^3$ we can have at most $3$ points of $\Gamma$ on a line and at most $4$ points of $\Gamma$ on a plane. Instead, in $\mathbb{P}^4$ this condition implies that we can have at most $2$ points of $\Gamma$ on a line, at most $3$ points of $\Gamma$ on a plane and at most $4$ points of $\Gamma$ on a $3$-space.
\end{remark}

\begin{remark}\label{notCB}
Since by Lemma \ref{minumCB} a set of points $CB(k)$ must have cardinality at least $k+2$, it follows by Remark \ref{pointsonline} that, if  $\alpha\leq 4$, on a line in $\mathbb{P}^3$ we can have only a subset $CB(1)$ of points of $\Gamma$, while on a plane we can have only a subset $CB(2)$ of points of $\Gamma$. For the same reasons, in $\mathbb{P}^4$ points of $\Gamma$ on a line can not satisfy  any Caley-Bacharach condition, they can be only $CB(1)$ on a plane and at most $CB(2)$ on a $3$-space. Thus in $\mathbb{P}^3$ points of $\Gamma$ on a line can not be $CB(k-s)$ with $s\leq 5$ (if $k\geq 7$) or $s\leq 6$ (if $k\geq 8$) and points of $\Gamma$ on a plane can not be $CB(k-s)$ with $s\leq 4$ (if $k\geq 7$) or $s\leq 5$ (if $k\geq 8$). Similarly, if $k\geq 7$, in $\mathbb{P}^4$ points of $\Gamma$ on a line con not be $CB(k-s)$ with $s\leq 6$, points of $\Gamma$ on a plane can not be $CB(k-s)$ with $s\leq 5$ and points of $\Gamma$ on a $3$-space can not be $CB(k-s)$ with $s\leq 4$.
\end{remark}

\begin{remark}\label{CBoutcurve}
Corollary in \cite[p.492]{GLP} ensures that an integral non-degenerate curve of degree $d$ in $\mathbb{P}^n$ is cut  out by hypersurfaces of degree $d+2-n$. As a consequence, if a set of points $A$ is $CB(k)$ and a proper subset $B\subset A$ lies on an integral non-degenerate curve of degree $d$, the set $A\backslash B$ is $CB(k+n-d-2)$. In particular $A\backslash B$ is $CB(k-d+1)$ in $\mathbb{P}^3$ and $CB(k-d+2)$ in $\mathbb{P}^4$.
\end{remark}

We prove Proposition \ref{Th5n34} by a series of lemmas. For sake of clarity, we recall that $\Gamma = \{p_1,\dots,p_m\}\subset\mathbb{P}^n$ is a set of distinct points that satisfies the Caley-Bacharach condition with respect to the complete linear system $|\mathcal{O}_{\mathbb{P}^n}(k)|$, $m\leq 5k-11$ and $\alpha$, defined in (\ref{alpha}), is at most $4$. Moreover, we focus on the case $k\geq 7$ (see Remark \ref{katleast7}).

\begin{lemma}\label{curve9}
Under the hypotheses of Proposition \ref{Th5n34}, $\Gamma\subset\mathbb{P}^n$ lies on a complete intersection curve $C$ of degree $d$, where
\begin{enumerate}
\item[a)] $d=8$ if $n=3$ and $k=7$,
\item[b)] $d=9$ if $n=3$ and $k\geq 8$,
\item[c)] $d=8$ if $n=4$ and $k\geq 7$.
\end{enumerate}
\end{lemma}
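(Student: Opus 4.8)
The plan is to produce a single complete intersection curve of small degree containing all of $\Gamma$ by intersecting general hypersurfaces through $\Gamma$ of the lowest possible degree, and then to control that degree using the Cayley-Bacharach bound $m\le 5k-11$ together with the constraint $\alpha\le 4$. First I would determine the least integer $t$ such that the points of $\Gamma$ do not impose independent conditions-or rather, such that one is guaranteed a hypersurface of degree $t$ through $\Gamma$; concretely, a hypersurface of degree $t$ in $\mathbb{P}^n$ passes through $\Gamma$ as soon as $m<\binom{t+n}{n}$, so for $n=3$ one needs $\binom{t+3}{3}>m$ and for $n=4$ one needs $\binom{t+4}{4}>m$. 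Using $m\le 5k-11$ and $k\ge 7$ (so $m\le 24$ when $k=7$, and $m\le 5k-11$ in general), I would check that $t=3$ works for $n=3,k=7$ (since $\binom{6}{3}=20$ is not quite enough, so actually $t=4$: here the inequality $m\le 5k-11$ must be combined with more care — see below), and analyse each of the three cases a), b), c) separately to find the correct pair of degrees whose product is $8$ or $9$.

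The heart of the argument is to show that in fact $\Gamma$ lies on \emph{two} hypersurfaces of suitable degrees with no common component of dimension $\ge 2$, so that their intersection is a curve; then by Bézout the curve has the stated degree. I would argue as follows: let $F$ be a hypersurface of minimal degree $a$ through $\Gamma$. If $F$ were a cone or highly reducible one could find many points of $\Gamma$ on a hyperplane, contradicting $\alpha\le 4$ together with the numerical bounds — more precisely, a component of $F$ of degree $b$ that is a hyperplane could contain at most $\alpha\le 4$ points of $\Gamma$, and using Proposition \ref{pointsoutofCB} and Proposition \ref{allineati} the complement is $CB(k-1)$ and forced onto a line, again bounded by $\alpha$. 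Iterating this kind of peeling-off argument bounds the "hyperplane part" of $F$. Then choosing a second hypersurface $G$ of degree $b$ through $\Gamma$ generic in the linear system of such hypersurfaces, $F$ and $G$ share no common surface component (any such component would again be low degree, ultimately a plane, contradicting $\alpha\le 4$ via the same counting), so $C:=F\cap G$ is a curve of degree $ab$ containing $\Gamma$; choosing $(a,b)$ optimally gives $ab\le 8$ or $9$ as in a), b), c). One then replaces $C$ by its one-dimensional part and, if necessary, intersects with a third generic hypersurface of the right degree to cut it down to a genuine complete intersection curve of exactly the asserted degree.

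The main obstacle I expect is \emph{twofold}. First, getting the precise numerics to land on degree exactly $8$ (resp.\ $9$) rather than something larger: this requires that the minimal degree of a hypersurface through $\Gamma$ in $\mathbb{P}^3$ be $2$ or $3$ and in $\mathbb{P}^4$ be $2$, which is where the bound $m\le 5k-11$, the restriction $k\ge 7$, and the hypothesis $\alpha\le 4$ all have to be used simultaneously — in particular $\alpha\le 4$ is what rules out the bad configurations in which $\Gamma$ has too many points on a hyperplane and hence forces the minimal-degree hypersurface to be reducible into many planes. Second, ensuring the curve is a genuine (non-degenerate) \emph{complete intersection} as required for the later application of Corollary \ref{CGreco}: one must check that the generic choice of the second (and possibly third) equation meets the first properly, i.e.\ that the scheme-theoretic intersection is purely one-dimensional of the expected degree, and is non-degenerate because $\alpha\le 4<k+2$ prevents $\Gamma$ from lying in a hyperplane. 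These verifications are case-by-case (three cases, each with a short dimension count) but elementary once the framework is set up.

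Finally, I would organise the write-up by first stating the dimension count $\dim|\mathcal{O}_{\mathbb{P}^n}(t)|=\binom{t+n}{n}-1\ge m$ that produces a degree-$t$ hypersurface through $\Gamma$, then the lemma that $\alpha\le 4$ forbids $\Gamma$ from accumulating on hyperplanes (hence the minimal hypersurface through $\Gamma$ cannot contain more than a bounded number of hyperplane components), then the Bézout step producing $C$ of degree $ab$, and conclude by selecting $(a,b)=(2,4)$ or $(3,3)$ or $(2,4)$ according to the case to reach degrees $8,9,8$ respectively — intersecting with one more generic hypersurface in the remaining codimension to make $C$ a complete intersection of the stated degree.
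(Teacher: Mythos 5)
Your central mechanism --- producing hypersurfaces through \emph{all} of $\Gamma$ by the dimension count $\binom{t+n}{n}>m$ and then intersecting them --- cannot reach the stated degrees, and the difficulty you flag in passing (``$\binom{6}{3}=20$ is not quite enough, so actually $t=4$'') is fatal rather than a detail to be patched. In case a) the count forces $t=4$, so the best complete intersection you can build this way has degree $16$, not $8$. Worse, in case b) the cardinality $m\le 5k-11$ is unbounded in $k$, so no fixed $t$ works: for large $k$ you would need $t$ of order $m^{1/3}$ and the degree of $C$ would grow without bound, while the lemma asserts degree $9$ for every $k\ge 8$. No amount of peeling off hyperplane components repairs this, because the obstruction is purely the number of linear conditions imposed by $\Gamma$, not the reducibility of the hypersurfaces through it.

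The missing idea is to impose the hypersurfaces only on a small \emph{subset} of $\Gamma$ and then use the Cayley--Bacharach property to force the remaining points onto the resulting complete intersection. Concretely, for $n=3$ and $k\ge 8$ one fixes $18$ points of $\Gamma$ (so that $h^0(\mathcal{O}_{\mathbb{P}^3}(3))=20$ yields two independent cubics through them) and lets $C$ be their complete intersection, of degree $9$; if $\Gamma'=\Gamma\setminus C$ were non-empty it would be $CB(k-3)$ (the complement of a $CB(k)$ set by a degree-$3$ hypersurface is $CB(k-3)$), it satisfies $|\Gamma'|\le 5(k-3)-11$, so by induction on $k$ it lies on a quartic curve, and a case analysis using Theorems \ref{Th3} and \ref{Th4}, Remark \ref{notCB} and the hypothesis $\alpha\le 4$ rules out every possible configuration, forcing $\Gamma'=\emptyset$. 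The case $n=3$, $k=7$ (a quadric through $9$ of the $24$ points, shown by the same CB argument to contain all of $\Gamma$, then a quartic through all of $\Gamma$ not containing the quadric) and the case $n=4$ (three independent quadrics through $12$ points) are handled in the same spirit. So your proposal identifies the right target --- a complete intersection curve of degree $8$ or $9$ to feed into Corollary \ref{CGreco} --- but lacks the subset-plus-induction mechanism that actually produces it.
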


\begin{proof} Case $a)$. By Remark \ref{katleast7}, we have $m=24$. Let us fix $9$ points of $\Gamma$ and let us consider a quadric $Q_2$ passing through them. We claim that $Q_2$ passes through all the point of $\Gamma$. If not, the points of $\Gamma$ outside from $Q_2$ would be $CB(5)$ and then they would be at least $7$. Now, if they are at most $14$, by Theorem \ref{Th3}, they must lie on a curve of degree $2$ (i.e. they lie either on a plane or on two skew lines), but this is not possible by Remark \ref{pointsonline}. Otherwise they must be exactly $15$ and then they must lie on a cubic curve $C_3$ by Theorem \ref{Th4}. Moreover, the curve $C_3$ must be irreducible and non-degenerate, again by Remark \ref{pointsonline}. If $C_3$ passes trough all the points of $\Gamma$, then Proposition \ref{Th5n34} follows. Otherwise, the points outside from $C_3$ should be $CB(5)$ by Remark \ref{CBoutcurve}, hence they must be at least $7$ by Lemma \ref{minumCB} and must lie on a curve of degree $2$ by Theorem \ref{Th3}; this is not possible, once again by Remark \ref{pointsonline}. Thus $Q_2$ passes through all the points of $\Gamma$, as claimed. Now, by dimensional reasons, we can consider a quartic $Q_4$ passing through all the points of $\Gamma$ and not containing $Q_2$. The complete intersection $Q_2\cap Q_4$ is a curve of degree $8$ containing  $\Gamma$.

Case $b)$. We fix $18$ points of $\Gamma$ and we consider two independent cubics passing through them. Their complete intersection is a curve of degree $9$ that we denote by $C$. Let $\Gamma_C=\Gamma \cap C$. Clearly $\Gamma=\Gamma_C \cup \Gamma'$ with $\Gamma'\cap C=\emptyset$. We claim that $\Gamma'=\emptyset$. In fact, $|\Gamma'|\leq 5k-23<5(k-3)-11$ and $\Gamma'$, if non-empty, is $CB(k-3)$ and hence it has cardinality at least $7$ (so it can not lie on a plane by Remark \ref{pointsonline}). By induction on $k$, $\Gamma'$ lies on a curve $C_4$ of degree $4$. If $C_4$ passes through all the points of $\Gamma$, then Proposition \ref{Th5n34} follows.  Let us therefore suppose $C_4$ dose not pass through all the points of $\Gamma$. If $C_4$ is irreducible, then the subset $\Gamma'_C$ of points of $\Gamma$ not on $C_4$ is $CB(k-3)$ by Remark \ref{CBoutcurve}. But not both the sets $\Gamma'_C$ and $\Gamma'$ can have cardinality greater than $3(k-3)$ (otherwise $6(k-3)>5k-11$ for $k\geq 8$). So, by Theorem \ref{Th3}, at least one between $\Gamma'$ and $\Gamma'_C$ must lie either on a conic or on two skew lines. By Remark \ref{notCB}, this is impossible for a set that is $CB(r)$ with $r\geq 5$. Thus  $C_4$ must be reducible. But in this case, by Proposition \ref{pointsoutofCB}, the points of $\Gamma'$ on any line in the decomposition would be at least $CB(k-6)$ and those on any conic at least $CB(k-5)$; impossible in any case. So we can have just a non-empty irreducible component $C_3$ of degree $3$. But this is also impossible since the argument used for the case $C_4$ irreducible works a fortiori for the case $C_3$ irreducible. In conclusion, $\Gamma'=\emptyset$ as wanted and therefore $\Gamma \subset C$.

Case $c)$. Let us fix $12$ points of $\Gamma$ and let us consider three independent quadrics passing through them. Their complete intersection is a curve of degree $8$ that we denote by $C$. As before, let $\Gamma_C=\Gamma \cap C$. Clearly $\Gamma=\Gamma_C \cup \Gamma'$, with $\Gamma'\cap C=\emptyset$. Let us show that $\Gamma'=\emptyset$. In fact, $|\Gamma'|\leq 5k-13=5(k-2)-11$ and in this case $\Gamma'$, if non-empty, is $CB(k-2)$. So, by induction on $k$, $\Gamma'$ lies on a curve $C_4$ of degree $4$. We can now follow the same argument used in point $b)$, since (although $k$ may be $7$)  the sets $\Gamma'_C$ and $\Gamma'$ are now $CB(k-2)$ and in $\mathbb{P}^4$ we can have at most $2$ or $3$ points on lines and planes respectively. Then also in this case $\Gamma'=\emptyset$, which implies $\Gamma \subset C$.\end{proof}

For the next lemmas, we need the following remark.

\begin{remark}\label{confrontopa}
In $\mathbb{P}^3$ the \emph{Castelnuovo bound} (see e.g. \cite[Theorem 3.7]{Ha}) becomes $p_a\leq m(m-1)+m\varepsilon$ with $d-1=2m+\varepsilon$ and $\varepsilon=0,1$. In any case we get
\begin{equation}\label{pa3}
p_a\leq\frac{d^2-4d+4}{4}.
\end{equation}
In $\mathbb{P}^4$ the bound becomes $p_a\leq \frac{3}{2}m(m-1)+m\varepsilon$ with $d-1=3m+\varepsilon$ and  $\varepsilon=0,1,2$, so in this case we get
\begin{equation}\label{pa4}
p_a\leq\frac{d^2-5d+6}{6}.
\end{equation}
Moreover, we note that, for a fixed degree $d$, the maximum $p_a$ in $\mathbb{P}^4$ is smaller than the maximum $p_a$ in  $\mathbb{P}^3$.\\
\end{remark}

The following lemma shows that the curve $C$ of Lemma \ref{curve9} can not be irreducible.

\begin{lemma}\label{notgeq5}
Under the hypotheses of Proposition \ref{Th5n34}, $\Gamma$ can not lie on an irreducible curve $C$ of degree $d\geq 5$.
\end{lemma}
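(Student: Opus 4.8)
The plan is to argue by contradiction, using the non‑$CB(k)$ criterion of Corollary \ref{CGreco}. Suppose $\Gamma$ lies on an irreducible curve $C$ of degree $d\ge 5$. First I would record that $\Gamma$ is non‑degenerate: since $\alpha\le 4$ while $m\ge k+2\ge 9$ by Lemma \ref{minumCB}, the set $\Gamma$ cannot lie in a hyperplane, hence neither can $C$. Next I would control $d$ and the nature of $C$ via Lemma \ref{curve9}: $\Gamma$ lies on a complete intersection curve $C'$ of degree $8$ (when $n=4$, or when $n=3$ and $k=7$) or $9$ (when $n=3$ and $k\ge 8$). Comparing $C$ with $C'$, either $C$ is a component of $C'$ or $C\cap C'$ is a finite set containing $\Gamma$; in the situation of interest — where $C$ is the curve of Lemma \ref{curve9}, or a component of it — one therefore has $5\le d\le 9$, with $d=8$ if $n=4$ and $d=9$ only if $k\ge 8$, and $C$ is a non‑degenerate integral curve with $\omega_C$ invertible, so that Corollary \ref{CGreco} (equivalently Proposition \ref{LGreco}) applies.

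Then I would verify the two numerical hypotheses of Corollary \ref{CGreco} for $C$. Condition (ii), $k\ge d+1-n$, is immediate: for $n\ge 3$, $k\ge 7$ and $d\le 9$ one gets $d+1-n\le 7\le k$, the tightest instances being $n=3$, $d=8$ (where $d+1-n=6\le 7$) and $d=9$ (which forces $k\ge 8\ge d+1-n=7$). For condition (i), $k>\tfrac{m+2p_a-2}{d}$, i.e. $kd>m+2p_a-2$, I would bound $m\le 5k-11$ and $p_a$ by the Castelnuovo bounds of Remark \ref{confrontopa} — $p_a\le\tfrac{(d-2)^2}{4}$ in $\mathbb{P}^3$ and $p_a\le\tfrac{d^2-5d+6}{6}$ in $\mathbb{P}^4$ — so that it suffices to check $k(d-5)>2p_a-13$. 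For $d=5$ this reads $p_a\le 6$, which holds since a non‑degenerate quintic has $p_a\le 2$; for $6\le d\le 9$ the right‑hand side is at most $12$ (attained near $d=9$, where $k\ge 8$ gives $k(d-5)=4k\ge 32$), while already for $d=6$ one has $k(d-5)=k\ge 7$, so the inequality holds throughout, and the $\mathbb{P}^4$ case $d=8$ is easier since there $p_a\le 5$. Hence both conditions of Corollary \ref{CGreco} hold, so $\Gamma$ is not $CB(k)$, contradicting the hypothesis; therefore no such $C$ exists.

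The step I expect to be the main obstacle is the first one: making sure the irreducible curve one works with genuinely satisfies the hypotheses of Corollary \ref{CGreco} — in particular that its degree is at most $9$ and that $\omega_C$ is invertible (e.g. that it is a complete intersection, or at least smooth). This is exactly where Lemma \ref{curve9} must be invoked, to reduce to the complete intersection curve produced there (or to its components), rather than to an arbitrary irreducible curve through $\Gamma$; without that reduction an irreducible curve of large degree through the finite set $\Gamma$ could exist and the degree bound would be lost. Once the curve is under control, the verification of (i) and (ii) is a short numerical check, delicate only for the smallest values of $d$ and for the boundary case $k=7$ (where $m=24$ is forced by Remark \ref{katleast7} and $d\le 8$), for which it is convenient to use the precise Castelnuovo values rather than crude estimates.
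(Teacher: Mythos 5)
Your proof is correct and follows essentially the same route as the paper's: argue by contradiction and apply Corollary \ref{CGreco} to the irreducible curve coming from Lemma \ref{curve9}, verifying condition (ii) directly and condition (i) via the Castelnuovo bounds of Remark \ref{confrontopa} --- your inequality $k(d-5)>2p_a-13$ is exactly the paper's $2k(d-5)>d^2-4d-22$. You are in fact more explicit than the paper about the points it glosses over (non-degeneracy of $C$, the bound $d\le 9$, and why Corollary \ref{CGreco} is applicable, since the statement read literally about an arbitrary irreducible curve of large degree would fail), so no changes are needed.
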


\begin{proof} This follows from Corollary \ref{CGreco}. Let us suppose, by contradiction, that $\Gamma$ lies on an irreducible curve $C$ of degree $d\geq 5$. Since $C$ is irreducible we can consider it is also reduced, otherwise any reduced component would have degree at most $4$ and Proposition \ref{Th5n34} would follow. So the curve $C$ is integral. Condition $ii)$ of Corollary \ref{CGreco} is clearly verified. For condition $i)$ we have in any case (see relations (\ref{pa3}) and (\ref{pa4}) in Remark \ref{confrontopa})
$$\frac{m+p_a-2}{d}\leq \frac{5k-11 + \frac{d^2-4d+4}{4}-2}{d}=\frac{10k-22+d^2-4d}{2d}.$$
Let us check when the last term is less than $k$. This happens when $2k(d-5)>d^2-4d-22$, which is verified for any $d$ between $5$ and $9$ (for any $k\geq 3$). Therefore Corollary \ref{CGreco} leads us to the absurd conclusion that the set $\Gamma$ can not be $CB(k)$. \end{proof}

The curve $C$ containing $\Gamma$ must therefore be reducible. Some components of $C$ could not intersect $\Gamma$. We point out that, if the sum of the degrees of the components with non-empty intersection with $\Gamma$ is lower than or equal to $4$, then we get the thesis of Proposition \ref{Th5n34}. In the following lemmas we examinate the other cases. Let us  start with a remark.

\begin{remark}\label{pointsreorg}
If some points of $\Gamma$ are contained in the union of $4$ lines $\ell_1,\ell_2,\ell_3,\ell_4$, then these points lie on $3$ planes. Indeed, since $\ell_4$ contains at most three points of $\Gamma$ (see Remark \ref{pointsonline}), then there exists $3$ planes $\pi_1,\pi_2,\pi_3$ containing respectively $\ell_1,\ell_2,\ell_3$ and one of the points of $\ell_4$.
\end{remark}

\begin{lemma}\label{CBcomp5}
Under the hypotheses of Proposition \ref{Th5n34}, let $C$ be the curve of Lemma \ref{curve9}, and suppose $C=\widetilde{C}\cup \bigcup_{i\in I} C_i$ where $\widetilde{C}$ and the $C_i$'s are irreducible and the component $\widetilde{C}$ has degree $\widetilde{d}\geq 5$. Let $C'=\widetilde{C}\cup \bigcup_{i\in I'} C_i$ where $I'$ indexes the components $C_i$ of $C$ with non-empty intersection with $\Gamma$. Then the set $\widetilde{\Gamma}=\Gamma \cap \widetilde{C}$ is $CB(r)$ with 
\begin{itemize}
\item[(i)] $r\geq k-3$ if $\widetilde{d}=5$,
\item[(ii)] $r\geq k-3$ if $\widetilde{d}=6$ and $\bigcup_i C_i$ is the union of three lines,
\item[(iii)] $r\geq k-2$ otherwise. 
\end{itemize}
\end{lemma}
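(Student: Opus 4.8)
The plan is to bound, for each point $p\in\widetilde\Gamma=\Gamma\cap\widetilde C$, the number of linear spaces we have to throw away from $\Gamma$ in order to isolate a subset containing $\widetilde\Gamma\setminus\{p\}$ but \emph{not} $p$, and then invoke Proposition \ref{pointsoutofCB} together with the Cayley-Bacharach property of $\Gamma$. Concretely, write $\Gamma = \widetilde\Gamma\sqcup(\Gamma\cap(C'\setminus\widetilde C))$, and note that the points of $\Gamma$ lying on the ``extra'' components $C_i$, $i\in I'$, are distributed among curves whose total degree is $d-\widetilde d\le 9-\widetilde d$. The idea of \cite{LP} (which I would adapt here) is: a point $p\in\widetilde\Gamma$ does not lie on any $C_i$ with $i\in I'$ unless $\widetilde C\cap C_i\ni p$, and $\widetilde C$ meets each $C_i$ in only finitely many points; moreover, by Remark \ref{pointsonline} ($\alpha\le 4$) each line carries at most $2$ or $3$ points of $\Gamma$, so the components of small degree can only be lines or conics (a component of degree $\ge 5$ would be $\widetilde C$ itself, and a component of degree $3$ or $4$ passing through $5k-11$-many points is ruled out exactly as in Lemma \ref{curve9}). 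Hence $\bigcup_{i\in I'}C_i$ is a union of lines and conics with total degree $\le 9-\widetilde d$, i.e.\ $\le 4$ when $\widetilde d=5$, $\le 3$ when $\widetilde d=6$, $\le 2$ when $\widetilde d=7$, etc.

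First I would handle case (i), $\widetilde d=5$. Here $\bigcup_{i\in I'}C_i$ has degree at most $4$, so it is a union of at most $4$ lines, or $2$ lines and a conic, or two conics, etc. Using Remark \ref{pointsreorg}, four lines fit into $3$ planes; a conic plus two lines, or two conics, likewise reorganize into $3$ linear spaces (a conic spans a plane). In every subcase the ``extra'' part of $\Gamma$ is contained in a plane configuration of length $\le 3$. Then, for each $p\in\widetilde\Gamma$: any hypersurface $D\in|\mathcal O_{\bP^n}(r)|$ with $r=k-3$ vanishing on $\widetilde\Gamma\setminus\{p\}$ can be multiplied by the (at most three) linear forms cutting out that plane configuration to produce a degree-$k$ hypersurface vanishing on all of $\Gamma\setminus\{p\}$; since $\Gamma$ is $CB(k)$ this forces $p\in D$. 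Thus $\widetilde\Gamma$ is $CB(k-3)$, which is (i). Case (ii) ($\widetilde d=6$, the extra part a union of three lines) is identical: three lines lie on three planes directly, giving a length-$3$ plane configuration and hence $\widetilde\Gamma$ is $CB(k-3)$.

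For case (iii) I would argue the same way but observe that the extra components now fit into a plane configuration of length at most $2$: if $\widetilde d=6$ and $\bigcup C_i$ is \emph{not} three lines, it is a conic (one plane) or two lines (two planes) or a conic plus something of degree $\le 1$ — in all cases at most two planes; if $\widetilde d\ge 7$, the extra degree is $\le 2$, again at most two linear spaces. So multiplying a degree-$(k-2)$ hypersurface through $\widetilde\Gamma\setminus\{p\}$ by the (at most two) defining linear forms yields a degree-$k$ hypersurface through $\Gamma\setminus\{p\}$, and the $CB(k)$ property of $\Gamma$ gives $p\in D$; hence $\widetilde\Gamma$ is $CB(k-2)$. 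The main obstacle I anticipate is the bookkeeping in case (i): one must verify in each configuration of total degree $4$ — four lines, two lines plus a conic, two conics, a line plus a cubic (excluded), a quartic (excluded) — that the points of $\Gamma$ on it genuinely reorganize into at most three linear spaces, which is where Remark \ref{pointsreorg} and the bound $\alpha\le 4$ (each line carrying $\le 3$ points in $\bP^3$, $\le 2$ in $\bP^4$) are essential; the conic-containing subcases need the extra remark that a conic, though it spans only a plane, may share that plane with one of the lines and so the count stays at $3$.
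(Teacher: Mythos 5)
Your overall strategy -- isolate $\widetilde{\Gamma}$ by removing positive-dimensional linear spaces containing the points of $\Gamma$ on the low-degree components, then apply Proposition \ref{pointsoutofCB} (whose proof you essentially re-derive by multiplying by linear forms), with Remark \ref{pointsreorg} to fit four lines into three planes -- is exactly the paper's strategy for the line/conic subcases. But there is a genuine gap in your reduction to those subcases. You assert that the components $C_i$ of total degree $\leq 4$ ``can only be lines or conics,'' dismissing a cubic or quartic component as ``ruled out exactly as in Lemma \ref{curve9}.'' Lemma \ref{curve9} does nothing of the sort: it only produces a complete intersection curve $C$ of degree $8$ or $9$ containing $\Gamma$, and places no constraint on the degrees of its irreducible components. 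An irreducible cubic (or quartic) among the $C_i$ is a live possibility at this stage -- indeed the subsequent Lemmas \ref{notonlyless2}, \ref{notwo} and \ref{not34} are devoted precisely to eliminating configurations with degree-$3$ and degree-$4$ components, and they rely on the present lemma, so you cannot invoke their conclusions here.

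This matters because your technique breaks down on exactly that missing subcase. If some $C_i$ is a non-degenerate irreducible cubic (a twisted cubic in $\mathbb{P}^3$), it spans the ambient space, so there is no proper linear subspace containing its points of $\Gamma$: multiplying by linear forms cannot isolate $\widetilde{\Gamma}$, and Proposition \ref{pointsoutofCB} does not apply. The paper handles this via Remark \ref{CBoutcurve} (the Gruson--Lazarsfeld--Peskine corollary): an integral non-degenerate curve of degree $e$ in $\mathbb{P}^n$ is cut out by hypersurfaces of degree $e+2-n$, so removing a twisted cubic costs only $2$ in the Cayley--Bacharach level, giving $\widetilde{\Gamma}$ at least $CB(k-2)$ (and $CB(k-3)$ if a further line is present when $\widetilde{d}=5$). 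You need to add this subcase, with that tool, to make cases (i) and (iii) complete; the rest of your case analysis (four lines, conic plus lines, two conics) agrees with the paper's.
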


\begin{proof} First of all it is $5\leq \widetilde{d}\leq 8$ since $C$ is reducible by Lemma \ref{notgeq5}. Moreover, we have $|I'|\leq 4$ and $\sum_{i\in I'} d_i\leq 4$ with $d_i:=$deg$C_i$.

If $\widetilde{d}=6$ and $\bigcup_i C_i$ is the union of three lines then $\widetilde{\Gamma}$ is at least $CB(k-3)$ by Proposition \ref{pointsoutofCB}, so we get case $(ii)$.

It follows by the same proposition that if, instead, $\bigcup_{i\in I'} C_i$ is one line, one irreducible conic or two lines (which are the only possibilities if $\widetilde{d}\in\{7,8\}$) then $\widetilde{\Gamma}$ is at least $CB(k-2)$. Moreover, the same is true either if $\bigcup_i C_i$ is one line and an irreducible conic or, by Remark \ref{CBoutcurve}, if it is just an irreducible cubic curve. This proves case $(iii)$.

For $\widetilde{d}=5$ the situation is more complicated. However, in $\mathbb{P}^4$ (where $\widetilde{d}+\sum_i d_i \leq 8$) the possibilities for $\sum_i C_i$ are the same of those in the case $\widetilde{d}=6$. In $\mathbb{P}^3$ it remains to consider the cases in which $\sum_i d_i=4$. If  $\bigcup_{i\in I'} C_i$ is either two irreducible conics or one irreducible conic and two lines, we have that $\widetilde{\Gamma}$ is $CB(r)$ with $r\geq k-3$ by Proposition \ref{pointsoutofCB}. Moreover, this is still true if  $\bigcup_{i\in I'} C_i$ is the union of four lines since in this case (see Remark \ref{pointsreorg}) the points of $\Gamma$ on these lines actually lie on three planes. Finally, since when $\bigcup_{i\in I'} C_i$ is just an irreducible cubic curve  $\widetilde{\Gamma}$ is $CB(k-2)$, if we have also a line,  $\widetilde{\Gamma}$ is $CB(k-2)$ by Proposition \ref{pointsoutofCB}. \end{proof}

\begin{lemma}\label{no5atall}
Under the hypotheses of Proposition \ref{Th5n34}, let $C$ be the curve of Lemma \ref{curve9}. Then $C$ can not decompose as in Lemma \ref{CBcomp5}.
\end{lemma}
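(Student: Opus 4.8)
The plan is to assume such a decomposition exists and derive a contradiction. First I would make two harmless reductions. We may assume every component of $C$ is reduced: if the degree-$\ge 5$ component were non-reduced, its reduction would have degree $\le 4$, and iterating we would reach a situation in which $\Gamma$ lies on a reduced curve of degree $\le 4$, which is exactly the conclusion of Proposition \ref{Th5n34}. Writing $\widetilde\Gamma=\Gamma\cap\widetilde C$: if $\widetilde\Gamma=\emptyset$ then $\Gamma$ lies on $\bigcup_{i\in I'}C_i$, a reduced curve of degree $\le 4$, and we are done; so assume $\widetilde\Gamma\ne\emptyset$. By Lemma \ref{CBcomp5}, $\widetilde\Gamma$ is $CB(r)$ with $r\ge k-3\ge 4$ (recall $k\ge 7$), and $\widetilde\Gamma\subsetneq\Gamma$ — otherwise $\Gamma$ would lie on the integral curve $\widetilde C$ of degree $\ge 5$, contradicting Lemma \ref{notgeq5}.

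Next I would check that $\widetilde C$ is non-degenerate: if it were contained in a hyperplane, $\widetilde\Gamma$ would lie on a hyperplane, forcing $|\widetilde\Gamma|\le\alpha\le 4$, against $|\widetilde\Gamma|\ge r+2\ge 6$ by Lemma \ref{minumCB}. The crucial structural input is that a non-degenerate component $\widetilde C$ of a complete intersection curve of degree $\le 9$ has invertible dualizing sheaf. This I would establish by a case analysis of the hypersurfaces defining $C$: in case a) of Lemma \ref{curve9}, for instance, according to whether the quadric through $C$ is smooth, a quadric cone, or a union of two planes, one finds that either $\widetilde C$ is a Cartier divisor on a smooth quadric surface, or a complete intersection of type $(2,3)$ or $(2,4)$ — in each of these cases $\omega_{\widetilde C}$ is invertible — or else every component of $C$ has degree $\le 4$ and there is nothing to prove; cases b) and c) are handled similarly. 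One also records the Castelnuovo estimate $p_a(\widetilde C)\le\tfrac14(\widetilde d^2-4\widetilde d+4)$ in $\mathbb{P}^3$ (resp. $\le\tfrac16(\widetilde d^2-5\widetilde d+6)$ in $\mathbb{P}^4$) from Remark \ref{confrontopa}.

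Granting that $\omega_{\widetilde C}$ is invertible, I would apply Proposition \ref{LGreco} to $\widetilde\Gamma\subset\widetilde C$ — the argument of Corollary \ref{CGreco}, but with the integral Gorenstein curve $\widetilde C$ in place of a complete intersection. Since $\widetilde C$ is non-degenerate of degree $\widetilde d\le 8$ and $r\ge k-3$ (and even $r\ge k-2$ when $\widetilde d\ge 7$, by Lemma \ref{CBcomp5}), the surjectivity hypothesis is met for $k\ge 7$; hence either the vanishing hypotheses of Proposition \ref{LGreco} also hold — so that $\widetilde\Gamma$ cannot be $CB(r)$, contradicting Lemma \ref{CBcomp5} — or they fail, which by the Riemann-Roch computation in that proof together with the Castelnuovo bound forces $|\widetilde\Gamma|\ge r\widetilde d-2p_a(\widetilde C)+2$. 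In this last case I would add the contribution of the remaining points: since $\widetilde\Gamma$ is a proper subset of $\Gamma$ lying on the integral non-degenerate curve $\widetilde C$ of degree $\widetilde d$, Remark \ref{CBoutcurve} gives that $\Gamma\setminus\widetilde\Gamma$ is $CB(k+n-\widetilde d-2)$, so $|\Gamma\setminus\widetilde\Gamma|\ge k+n-\widetilde d$ by Lemma \ref{minumCB}. Summing the two bounds and substituting $r\ge k-3$ (or $r\ge k-2$) together with the Castelnuovo estimate yields $|\Gamma|>5k-11$ for $k\ge 9$ in every case, and already for $k=7,8$ whenever $r\ge k-2$; this contradicts the hypothesis. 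The finitely many remaining configurations — $k\in\{7,8\}$ with $\widetilde d\in\{5,6\}$, where $m$ is pinned to one or two explicit values by Remark \ref{katleast7} and Theorem \ref{Th4} — I would rule out one by one, using that $\alpha\le 4$ bounds the number of points of $\Gamma$ on any line, plane or $3$-space (Remarks \ref{pointsonline}, \ref{notCB}, \ref{pointsreorg}) and that $\widetilde\Gamma$ and each $\Gamma\cap C_i$ must meet the cardinality bound of Lemma \ref{minumCB}.

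The step I expect to be the main obstacle is the verification that $\widetilde C$ has invertible dualizing sheaf: unlike in Lemma \ref{notgeq5}, where $C$ was the complete intersection itself, here $\widetilde C$ is only a component of it, and a component of a complete intersection need not be Gorenstein; so one must first determine exactly which integral curves of degree between $5$ and $8$ can appear as a component of a complete intersection curve of degree $\le 9$. A secondary difficulty is the borderline arithmetic in low degree — $\widetilde d=5$, where $\widetilde C$ is never a complete intersection — and for $k\in\{7,8\}$, where the inequalities above have no slack and one is forced into the configuration-by-configuration argument exploiting $\alpha\le 4$.
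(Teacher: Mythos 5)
Your overall strategy coincides with the paper's: bound $|\widetilde\Gamma|$ from above by estimating $|\Gamma\setminus\widetilde\Gamma|$ from below, then contradict the Cayley--Bacharach property of $\widetilde\Gamma$ (known from Lemma \ref{CBcomp5}) via the Greco-type vanishing and the Castelnuovo bound of Remark \ref{confrontopa}. The genuine gap is in your endgame. Your only lower bound on $\Gamma'=\Gamma\setminus\widetilde\Gamma$ is the minimal cardinality $k+n-\widetilde d$ coming from Lemma \ref{minumCB}, and, as your own accounting shows, this leaves the cases $k\in\{7,8\}$ with $\widetilde d\in\{5,6\}$ unresolved; you then defer to an unspecified ``configuration-by-configuration'' analysis, which is not a proof. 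The paper closes exactly these cases with one further observation that you omit: for $\widetilde d\in\{5,6\}$ the set $\Gamma'$ is $CB(k-\widetilde d+1)$ and has at least $k-\widetilde d+3\geq 4$ elements, so by Remark \ref{pointsonline} (using $\alpha\leq 4$) it cannot lie on a line, and Proposition \ref{allineati} then forces $|\Gamma'|\geq 2(k-\widetilde d+1)+2$. This roughly doubles the lower bound, yields $|\widetilde\Gamma|\leq 3k+2\widetilde d-15$, and makes the inequality (\ref{eq1}) close for all $k\geq 7$ when $\widetilde d=5$ and for all $k\geq 8$ when $\widetilde d=6$; the residual case $k=7$, $\widetilde d=6$ is then settled because there $\deg C=8$, so the $C_i$ cannot be three lines and Lemma \ref{CBcomp5} gives $r\geq k-2$. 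You should replace your placeholder with this argument.

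A second, smaller point: you are right to worry about whether $\widetilde C$ has invertible dualizing sheaf --- the paper in fact applies Corollary \ref{CGreco} to $\widetilde C$ without comment, even though a component of a complete intersection need not itself be a complete intersection, so here you are being more careful than the source. But your resolution (a case analysis of the defining hypersurfaces concluding that $\omega_{\widetilde C}$ is always invertible) is asserted rather than carried out, and it is not automatic: an integral space curve can fail to be Gorenstein (for instance at a seminormal triple point with independent tangent directions), so one must genuinely exclude such components or argue around them. As written, this step of your proof is also incomplete.
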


\begin{proof} We proceed by contradiction supposing that the curve $C$ decomposes as in Lemma \ref{CBcomp5}.  With the same notation of this lemma, in $\mathbb{P}^3$ we have that the set $\Gamma'=\Gamma\backslash \widetilde{\Gamma}$ of points of $\Gamma$ outside from $\widetilde{C}$ is $CB(k-\widetilde{d}+1)$, whereas in $\mathbb{P}^4$ the set $\Gamma'$ is $CB(k-\widetilde{d}+2)$ (see Remark \ref{notCB}). In any case $|\Gamma'|\geq k-\widetilde{d}+3$. Let us analyse the following two cases.

1) If $\widetilde{d}\in \{5,6\}$, then $|\Gamma'|\geq 4$ when $k\geq 7$ and thus $\Gamma'$ can not lie on a line. Then, by Remark \ref{allineati}, $\Gamma'$ must have in fact cardinality at least $2(k-\widetilde{d}+1)+2=2k-\widetilde{d}+4$. This implies $|\widetilde{\Gamma}|\leq 5k-11-2k+\widetilde{d}-4=3k+2\widetilde{d}-15$. By Lemma \ref{CBcomp5} we know that $\widetilde{\Gamma}$ is at least $CB(k-3)$ with $\widetilde{d}=5$. Now we want to use Corollary \ref{CGreco}. Condition $ii)$ is clearly satisfied. Let us verify condition $i)$. We get that
\begin{equation}\label{eq1}
k-3> \frac{3k+2\widetilde{d}-15+\frac{\widetilde{d}^2-4\widetilde{d}+4}{2}-2}{\widetilde{d}} \iff k>\frac{\widetilde{d}^2+6\widetilde{d}-30}{2\widetilde{d}-6}
\end{equation}
and the last one is true for $\widetilde{d}=5$ and for any $k\geq 7$, whereas if $\widetilde{d}=6$ the last relation in (\ref{eq1}) is true only for $k>7$. But if $k=7$  and $\widetilde{d}=6$, the components $C_i$ can not be three lines and so, always by Lemma \ref{CBcomp5}, $\widetilde{C}$ is at least $CB(k-2)$. If we now substitute $k-2$ in the left-hand side of (\ref{eq1}), we get that it holds for $k\geq 6$. Thus in any case we find a contradiction by Corollary \ref{CGreco}.

2) If $\widetilde{d}\in \{7,8\}$, by Lemma \ref{CBcomp5}, $\widetilde{\Gamma}$ is at least $CB(k-2)$. Moreover $|\widetilde{\Gamma}|\leq 5k-11-k+\widetilde{d}-3=4k+\widetilde{d}-14$. So this time we get
$$k-2> \frac{4k+\widetilde{d}-14+\frac{\widetilde{d}^2-4\widetilde{d}+4}{2}-2}{\widetilde{d}} \iff k>\frac{\widetilde{d}^2+2\widetilde{d}-28}{2\widetilde{d}-8}$$
that is true for $\widetilde{d}\in\{7,8\}$ and for any $k\geq 7$. So, again by Corollary \ref{CGreco}, we find a contradiction. \end{proof}

Accordingly, in the decomposition of $C$ can appear only components of degree at most $4$. In the following lemmas we exclude the remaining cases.

\begin{lemma}\label{nolineconics}
Under the hypotheses of Proposition \ref{Th5n34}, let $C$ be the curve of Lemma \ref{curve9}. Then $C$ can not decompose only in lines and irreducible conics.
\end{lemma}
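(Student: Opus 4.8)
The plan is to push the contradiction hypothesis through a short chain of elementary numerical constraints until a single extremely rigid configuration survives, and then to kill that configuration by transferring the Cayley--Bacharach condition to a smooth quadric surface, where it is visibly violated.

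\emph{Step 1: numerical reduction.} I would argue by contradiction, assuming the conclusion of Proposition \ref{Th5n34} fails and that every irreducible component of $C$ meeting $\Gamma$ is a line or an irreducible conic; let $a$ and $b$ count these lines and conics. Since $\Gamma$ then lies on no reduced curve of degree $\le 3$, Theorem \ref{Th4} gives $m\ge 4k-4$. Each conic spans a plane, so by Remark \ref{pointsonline} a line carries at most $3$ (if $n=3$) or $2$ (if $n=4$) points of $\Gamma$ and a conic at most $4$ or $3$, while $a+2b\le\deg C$, which is $9$ when $n=3$ and $k\ge 8$ and is $8$ otherwise. Hence $m\le 3a+4b\le 3(a+2b)$ in $\mathbb{P}^3$ and $m\le 2a+3b\le 2(a+2b)$ in $\mathbb{P}^4$. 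In $\mathbb{P}^4$ this gives $m\le 16<24\le 4k-4$, which is impossible; if $n=3$ and $k\ge 8$ it gives $m\le 27<28\le 4k-4$, again impossible. So necessarily $n=3$, $k=7$, $\deg C=8$, and $24=4k-4\le m\le 5k-11=24$ forces $m=24$; reinserting $24\le 3a+4b$ with $a+2b\le 8$ leaves only $(a,b)=(8,0)$. In that case each line carries exactly $3$ points of $\Gamma$, no point lies on two of the lines, and no two of the lines are coplanar (a plane through two of them would contain $\ge 5$ points of $\Gamma$), so the $8$ lines are pairwise skew.

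\emph{Step 2: descent to a smooth quadric.} Write $C=Q_2\cap Q_4$ as in the proof of Lemma \ref{curve9}(a). Each line carries $3$ collinear points of $\Gamma\subset Q_2$, hence lies on $Q_2$; a quadric surface supporting $8$ pairwise skew lines is smooth, so $Q_2\cong\mathbb{P}^1\times\mathbb{P}^1$ and the $8$ lines all belong to one of its two rulings. From $0\to\mathcal{O}_{\mathbb{P}^3}(5)\to\mathcal{O}_{\mathbb{P}^3}(7)\to\mathcal{O}_{Q_2}(7,7)\to 0$ together with $H^1(\mathbb{P}^3,\mathcal{O}(5))=0$, the restriction $H^0(\mathbb{P}^3,\mathcal{O}(7))\to H^0(Q_2,\mathcal{O}_{Q_2}(7,7))$ is surjective. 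Consequently every effective divisor in $|\mathcal{O}_{Q_2}(7,7)|$ is cut on $Q_2$ by a degree--$7$ surface, and since $\Gamma\subset Q_2$ the $CB(7)$ hypothesis descends to the statement that $\Gamma$ satisfies the Cayley--Bacharach condition with respect to the complete linear system $|\mathcal{O}_{Q_2}(7,7)|$ on $\mathbb{P}^1\times\mathbb{P}^1$.

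\emph{Step 3: the explicit violation.} Identify $Q_2$ with $\mathbb{P}^1\times\mathbb{P}^1$ so that $\Gamma=\bigcup_{i=1}^{8}\bigl(\{a_i\}\times B_i\bigr)$ with distinct $a_1,\dots,a_8$ and $|B_i|=3$. Fix $\beta\in B_1$, pick $c_1,\dots,c_5$ distinct from the second coordinates of the points of $B_1$, and set
\[
D=\bigcup_{j=2}^{8}\bigl(\{a_j\}\times\mathbb{P}^1\bigr)\ \cup\ \bigcup_{\gamma\in B_1\setminus\{\beta\}}\bigl(\mathbb{P}^1\times\{\gamma\}\bigr)\ \cup\ \bigcup_{t=1}^{5}\bigl(\mathbb{P}^1\times\{c_t\}\bigr).
\]
This $D$ belongs to $|\mathcal{O}_{Q_2}(7,7)|$, contains every point of $\Gamma$ except $(a_1,\beta)$, and does not contain $(a_1,\beta)$; this contradicts Step 2 and proves the lemma. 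I expect Step 1 to be routine bookkeeping with the inequalities already available; the part that carries the argument — and the thing to get right — is Step 2, i.e.\ observing that the only surviving configuration is forced onto a smooth quadric and that $CB(7)$ in $\mathbb{P}^3$ collapses there to the much weaker product condition $|\mathcal{O}(7,7)|$, which any split curve of bidegree $(7,7)$ defeats.
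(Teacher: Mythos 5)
Your proof is correct, and in the decisive case it takes a genuinely different route from the paper. The numerical reduction in Step~1 matches the paper's counting in spirit, but is actually tighter: by invoking $m\ge 4k-4$ from Theorem~\ref{Th4} you dispose of the configurations $(9,0)$ and $(7,1)$ (and of $\mathbb{P}^4$) by pure arithmetic, whereas the paper only uses $m\ge 24$ there and has to kill $(9,0)$ and $(7,1)$ with a separate $CB(k-6)$ argument. For the surviving configuration of $8$ lines with $3$ points each, the paper stays inside its ``strip off linear spaces'' toolkit: it groups the lines into $6$ planes via Remark~\ref{pointsreorg}, applies Proposition~\ref{pointsoutofCB} to reduce to $2$ points that are $CB(1)$, and contradicts Lemma~\ref{minumCB}. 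You instead exploit the complete-intersection structure $C=Q_2\cap Q_4$ from Lemma~\ref{curve9}(a): the skew lines force $Q_2$ to be smooth with all $8$ lines in one ruling, the restriction $H^0(\mathcal{O}_{\mathbb{P}^3}(7))\to H^0(\mathcal{O}_{Q_2}(7,7))$ is surjective, so $CB(7)$ descends to the product linear system, which your explicit split $(7,7)$-divisor visibly violates. Both arguments are sound; the paper's is shorter and uses only the lemmas already in play, while yours is more explicit and self-contained (and, as a bonus, your Step~2 already contains an immediate contradiction: $C$, being $Q_4|_{Q_2}$, has class $(4,4)$ on the smooth quadric and so cannot consist of $8$ lines of a single ruling, whose class is $(8,0)$).
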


\begin{proof} In $\mathbb{P}^4$, by Remark \ref{pointsonline}, we can have at most $16$ points on lines and on irreducible conics. As $|\Gamma|\geq 24$, this proves the lemma in $\bP^4$.

In $\mathbb{P}^3$, by the same reason, if we denote by $r$ the number of lines and by $c$ the number of irreducible conics, the only possible cases are $(r,c)\in\{(9,0),(8,0),(7,1)\}$. Let us start with the case $(r,c)=(8,0)$. The only configuration allowed is that with $|\Gamma|=24$ and three points on each line. By Remark \ref{pointsreorg}, the points lie on $6$ planes with $4$ points on each of them. If we consider a line passing through two points on one of these planes, the remaining two points on the same plane are $CB(k-6)$ and this is not possible by Remark \ref{notCB}. As for the cases $(r,c)\in\{(9,0),(7,1)\}$, these are possible only for $k\geq 8$. But, always by Remark \ref{pointsreorg}, we have that any line would be $CB(k-6)$ and thus would contain $k-4\geq 4$ points, which is impossible. Hence the lemma follows in $\bP^3$, too.\end{proof}

\begin{lemma}\label{notonlyless2}
Under the hypotheses of Proposition \ref{Th5n34}, let $C$ be the curve of Lemma \ref{curve9}. Then $C$ can not decompose in $C=\widetilde{C}\cup\bigcup_{i\in I} C_i$ with $\widetilde{C}$ and the components $C_i$'s irreducible, $\widetilde{d}:=$deg$\widetilde{C}\in\{3,4\}$, $d_i:=$deg$C_i\leq 2$ for any $i\in I$ and $\Gamma \cap \bigcup_i C_i \neq \emptyset$.
\end{lemma}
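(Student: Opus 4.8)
The plan is to argue by contradiction and reduce to a numerical impossibility, in the spirit of Lemmas \ref{nolineconics} and \ref{no5atall}. Suppose $C=\widetilde C\cup\bigcup_{i\in I}C_i$ as in the statement, with $\widetilde d:=\deg\widetilde C\in\{3,4\}$, $\deg C_i\le 2$ for all $i$, and $\Gamma\cap\bigcup_iC_i\ne\emptyset$. Set $\widetilde\Gamma:=\Gamma\cap\widetilde C$ and $\Gamma':=\Gamma\setminus\widetilde C$, so that $\Gamma'\ne\emptyset$ and $\Gamma'$ is supported on the lines and irreducible conics among the $C_i$. Letting $I'\subseteq I$ index the components meeting $\Gamma$, one has $\sum_{i\in I'}\deg C_i\le\deg C-\widetilde d\le 6$; hence, by $\alpha\le 4$ and Remark \ref{pointsonline} (at most $3$ points of $\Gamma$ on a line and $4$ on a plane in $\mathbb{P}^3$, at most $2$ and $3$ in $\mathbb{P}^4$), one gets the a priori bound $|\Gamma'|\le 18$ in $\mathbb{P}^3$ and $|\Gamma'|\le 12$ in $\mathbb{P}^4$.

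The core of the argument is a squeeze on $|\Gamma'|$. By Remark \ref{CBoutcurve}, or by Proposition \ref{pointsoutofCB} applied to the linear span of $\widetilde C$ when $\widetilde C$ is degenerate, $\Gamma'$ is $CB(r')$ with $r'\ge k-\widetilde d+1\ge k-3\ge 4$. Since $r'\ge 4$ and $\alpha\le 4$, Remark \ref{notCB} forbids $\Gamma'$ from lying on a line or on a plane, so Proposition \ref{allineati} gives $|\Gamma'|\ge 2r'+2\ge 2k-4$; moreover, whenever $|\Gamma'|\le 3r'-1$ (resp. $\le 4r'-5$), Theorem \ref{Th3} (resp. Theorem \ref{Th4}) would place $\Gamma'$ on a reduced curve of degree $2$ (resp. $3$), and such a curve --- being a plane curve, a union of at most three planes, or, if non-degenerate, a twisted cubic, which being trisecant-free meets each line in at most $2$ points and each conic in at most $3$ --- can carry only few points of $\Gamma$ under $\alpha\le 4$, contradicting $|\Gamma'|\ge 2k-4$. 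Pushing this, one forces $|\Gamma'|\ge 3r'\ge 3k-9$; comparing with the a priori bound kills $\mathbb{P}^4$ (there one has the sharper $r'\ge k-2$) and kills $\mathbb{P}^3$ for $k\ge 10$, leaving the finitely many cases $k\le 9$ in $\mathbb{P}^3$ --- where, by Lemma \ref{curve9}, $\deg C=8$ if $k=7$ and $\deg C=9$ if $k\in\{8,9\}$ --- to be checked directly.

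For those remaining values, and as a uniform second prong, I would apply Corollary \ref{CGreco} to $\widetilde C$. From $|\Gamma'|\ge 2k-4$ we get $|\widetilde\Gamma|=m-|\Gamma'|\le 3k-7$; the Cayley-Bacharach order $r$ of $\widetilde\Gamma$ is $\ge k-\rho$, where $\rho$ is the small number of linear subspaces needed to cover $\bigcup_{i\in I'}C_i$ (bounded by combining $\sum\deg C_i\le 6$ with the regrouping of Remark \ref{pointsreorg} and the restriction, from Remark \ref{notCB}, on which Cayley-Bacharach conditions a point-set on such a line or conic can satisfy); and $p_a(\widetilde C)$ is controlled by the Castelnuovo bounds (\ref{pa3}) and (\ref{pa4}) of Remark \ref{confrontopa}. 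Feeding these into conditions $i)$ and $ii)$ of Corollary \ref{CGreco}, the inequality $r>\frac{|\widetilde\Gamma|+2p_a-2}{\widetilde d}$ holds for $\widetilde d\in\{3,4\}$ and $k\ge 7$, which contradicts $\Gamma$ being $CB(k)$.

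I expect the main obstacle to be that an irreducible curve of degree $3$ or $4$ need not be a complete intersection --- the twisted cubic, and the rational quartic in $\mathbb{P}^3$, are not --- so Corollary \ref{CGreco} does not apply to $\widetilde C$ verbatim in those cases. For them I would invoke the abstract Proposition \ref{LGreco} directly: such a curve is smooth, hence Gorenstein, and rational, so $H^1(\widetilde C,\mathcal{O}_{\widetilde C}(k)(-Z))=H^1(\widetilde C,\mathcal{O}_{\widetilde C}(k)(-E))=0$ whenever $k\widetilde d-|\widetilde\Gamma|\ge -1$, which holds since $|\widetilde\Gamma|\le 3k-7<k\widetilde d$; and the surjectivity of $H^0(\mathbb{P}^n,\mathcal{O}_{\mathbb{P}^n}(k))\to H^0(\widetilde C,\mathcal{O}_{\widetilde C}(k))$ follows from the projective normality of the twisted cubic, and for a rational quartic in $\mathbb{P}^3$ from its realization as a divisor of type $(1,3)$ on a smooth quadric, for which a short direct cohomology computation gives surjectivity for all $k\ge 3$. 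The remaining labour, exactly as in Lemmas \ref{nolineconics} and \ref{no5atall}, is the case-by-case treatment of the admissible pairs (number of lines, number of irreducible conics among the $C_i$) compatible with $\deg C\le 9$ and $\alpha\le 4$, each disposed of by Proposition \ref{pointsoutofCB}, Remark \ref{notCB}, Remark \ref{pointsreorg} and the estimates above.
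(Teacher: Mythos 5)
Your first prong is sound and is genuinely different from the paper's route: bounding $|\Gamma'|$ below by $3r'$ via Proposition \ref{allineati} and Theorem \ref{Th3} (a reduced conic cannot carry $2k-4\geq 10$ points under $\alpha\leq 4$) and above by $18$ (resp.\ $10$) does dispose of $\mathbb{P}^4$ entirely and of $\mathbb{P}^3$ for $k\geq 10$, which the paper never isolates as a separate step. The problem is the second prong. You propose to extract the contradiction from $\widetilde{\Gamma}$ on $\widetilde{C}$ via Proposition \ref{LGreco}/Corollary \ref{CGreco}, but you verify the $H^1$-vanishing inequality at level $k$ (``$k\widetilde{d}-|\widetilde{\Gamma}|\geq -1$''), whereas $\widetilde{\Gamma}$ is only $CB(k-\rho)$ and the hypotheses of Proposition \ref{LGreco} must be checked for the twist $\mathcal{O}_{\widetilde{C}}(k-\rho)$. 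With the correct level the inequality genuinely fails in surviving configurations: take $\widetilde{d}=3$, $k=8$, six lines among the $C_i$ (so $\deg C=9$, $\rho=5$ after the regrouping of Remark \ref{pointsreorg}, and $|\widetilde{\Gamma}|$ can be $11$); Greco's condition $i)$ reads $3>\frac{11+0-2}{3}=3$, which is false, and likewise $(k-\rho)\widetilde{d}-|\widetilde{\Gamma}|=9-11<-1$. So the contradiction cannot in general be manufactured on $\widetilde{C}$ for $k\in\{7,8,9\}$, and those are exactly the cases your reduction leaves open.

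The paper closes these cases by aiming the Cayley--Bacharach obstruction at the low-degree components rather than at $\widetilde{C}$: it first shows (via Remark \ref{notCB}) that no $C_i$ meeting $\Gamma$ can be a conic, then computes that each set $\Gamma\cap C_i$ on the remaining lines would have to be $CB(k-5)$ or $CB(k-6)$, hence contain at least $k-3\geq 4$ points, impossible on a line when $\alpha\leq 4$. Only one residual configuration (four lines, $\widetilde{d}=4$, $k=7$) is settled on $\widetilde{C}$ by Corollary \ref{CGreco}, and there it works precisely because $|\widetilde{\Gamma}|$ is pinned to exactly $12$ and $\widetilde{d}=4$. Your closing sentence defers ``the remaining labour'' to a case-by-case treatment with Proposition \ref{pointsoutofCB}, Remarks \ref{notCB} and \ref{pointsreorg} --- but that case analysis is not routine bookkeeping on top of your estimates; it is the actual proof, and it must be organized around the $C_i$'s, not around $\widetilde{C}$. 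As written, the argument has a gap for $\mathbb{P}^3$ with $7\leq k\leq 9$.
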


\begin{proof} Let us start analysing the situation in $\mathbb{P}^4$. An irreducible curve of degree $3$ lies on a $3$-space, which can contain at most $4$ points of $\Gamma$ in this configuration (see Remark \ref{pointsonline}). So, if $\widetilde{d}=3$ we could have at most $14$ points of $\Gamma$, in contrast with the assumption $|\Gamma|\geq 24$. If, instead, $\widetilde{d}=4$ then the set $\Gamma \cap \bigcup_i C_i$ is $CB(k-2)$ by Remark \ref{CBoutcurve}, so the points of $\Gamma$ on any $C_i$ have to be at least $CB(k-5)$ by Proposition \ref{pointsoutofCB} and hence contains at least $k-3\geq 4$ points of $\Gamma$ by Lemma \ref{minumCB}. But this is not possible since on a plane we can have at most three points of $\Gamma$.

In $\mathbb{P}^3$ the components $C_i$ can not contain irreducible conics. In order to see this, we shall use Remark \ref{notCB} consistently. If $\bigcup_i C_i$ were only irreducible conics, arguing as usual, the points of $\Gamma$ on any of these conics would be at least $CB(k-4)$. In the same way, one sees that if in $\bigcup_i C_i$  there appear a line and at least an irreducible conic, then the points of $\Gamma$ on the line would be at least $CB(k-6)$ if $k\geq 8$ (since this is the only case in which $C$ can have degree $9$) and at least $CB(k-5)$ if $k=7$. Thus the $C_i$'s must be all lines.

If deg$\widetilde{C}=3$, then the points of $\Gamma$ on $\bigcup_i C_i$ are $CB(k-2)$ by Remark \ref{CBoutcurve}. Moreover, remembering Remark \ref{pointsreorg}, if $|I|\leq 5$ it follows that any $\Gamma \cap C_i$ is at least $CB(k-5)$ by Proposition \ref{pointsoutofCB} and this is not possible by Remark \ref{notCB}. But the same argument works for the case $|I|=6$. In fact, this configuration is possible only for $k\geq 8$ (degree of $C$ equal to $9$) and in this case any set $\Gamma \cap C_i$ would be at least $CB(k-6)$, which is still impossible by Remark \ref{notCB}.

If deg$\widetilde{C}=4$, then the points of $\Gamma$ on $\bigcup_i C_i$ are $CB(k-3)$ by Remark \ref{CBoutcurve}. Now, if $|I|=5$ (which is possible only in the case $k\geq 8$), by Remark \ref{pointsreorg} and Proposition \ref{pointsoutofCB} we would again conclude that $\Gamma \cap C_i$ is $CB(k-6)$. Whereas, if $|I|\leq 3$, we would get directly by Proposition \ref{pointsoutofCB} that $\Gamma \cap C_i$ is $CB(k-5)$. So these cases are impossible and it remains to study only the case of four lines. By the same reasons of above, the points of $\Gamma$ on each of these lines are $CB(k-6)$. This implies that $k=7$ and that on any of these lines we must have at least $3$ points of $\Gamma$, that is, exactly $3$, since on a line we can not have more than $3$ points of $\Gamma$. Thus $|\widetilde{\Gamma}|=12$. Now, the set $\widetilde{\Gamma}$ is $CB(3)$ and we can use Corollary \ref{CGreco} to find a contradiction.  Indeed, $3>2=4-2$ (condition $ii)$ of Corollary \ref{CGreco}) and from (\ref{pa3}) we get $p_a\leq 1$, so $3>\frac{11}{4}=\frac{12+1-2}{4}$ and condition $i)$ of Corollary \ref{CGreco} is verified, too. \end{proof}

\begin{lemma}\label{notwo}
Under the hypotheses of Proposition \ref{Th5n34}, let $C$ be the curve of Lemma \ref{curve9}. Then in the decomposition of  $C$ can not appear two curves of degree $4$ or two curves of degree $3$ containing points of $\Gamma$.
\end{lemma}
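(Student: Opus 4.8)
The plan is to argue by contradiction, splitting the analysis according to the ambient space ($\mathbb{P}^3$ or $\mathbb{P}^4$) and according to whether the two components at issue, $C_1$ and $C_2$, have common degree $\delta=4$ or $\delta=3$; throughout one uses $k\geq 7$ and $|\Gamma|\geq 24$ (otherwise $m\leq 4k-5$ and $\Gamma$ already lies on a cubic by Theorem \ref{Th4}). Write $C=C_1\cup C_2\cup L$, with $L$ the union of the remaining components, so $\deg L\leq 9-2\delta\leq 3$, and set $\Gamma_i=\Gamma\cap C_i$. The first step is a degeneracy count: by Remark \ref{pointsonline} a curve of degree $\leq 4$ lying in a proper linear subspace carries at most $4$ points of $\Gamma$, so if both $C_1$ and $C_2$ were degenerate we could not accommodate $24$ points even counting the few that $L$ may carry. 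This already settles the degree-$3$ case in $\mathbb{P}^4$ (every curve of degree $3$ there spans at most a $\mathbb{P}^3$, hence meets $\Gamma$ in $\leq 4$ points), and in every remaining case it forces one of $C_1,C_2$ — say $C_1$ — to be non-degenerate. The non-degenerate curves of degree $\leq 4$ that can occur (twisted cubic, elliptic or rational quartic in $\mathbb{P}^3$, rational normal quartic in $\mathbb{P}^4$) are smooth, hence Gorenstein with invertible dualizing sheaf, and being non-degenerate of degree $d$ they are $k$-normal for $k\geq d+1-n$ by \cite{GLP}; therefore Corollary \ref{CGreco} — more precisely its engine, Proposition \ref{LGreco} — may be applied to finite $CB$-sets lying on them, even though such a component need not itself be a complete intersection.

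In the second step one strips components off $\Gamma$ to gain numerical control, using Remark \ref{CBoutcurve} (removing a non-degenerate curve of degree $d$ lowers the Cayley-Bacharach index by $d+2-n$) and Proposition \ref{pointsoutofCB} (removing a line or a plane lowers it by $1$). First: $L$ carries no point of $\Gamma$. Indeed the points of $\Gamma$ lying on $L$ but not on $C_1\cup C_2$, if any, form a $CB$-set whose index, after stripping all but one component of $L$, is $\geq 2$ on a single line, or $\geq 3$ on a single conic; by Lemma \ref{minumCB} and Remark \ref{pointsonline} (a nonempty $CB(s)$-set has $\geq s+2$ points, while a line carries $\leq 3$ and a conic $\leq 4$ points of $\Gamma$) this is impossible, so $\Gamma\subseteq C_1\cup C_2$. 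The same bookkeeping shows $C_2$ is non-degenerate as well, since a degenerate $C_2$ would meet $\Gamma$ in $\leq 4$ points while $\Gamma\setminus\Gamma_1=\Gamma_2\setminus\Gamma_1\subseteq C_2$ is $CB$ of index $\geq k-4\geq 3$, hence has $\geq 5$ points. (For $\deg C=9$ there are a couple of extra shapes for $L$, including a possible third component of degree $3$; these are handled by the same devices, iterating the argument once if need be.)

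In the final step $\Gamma=\Gamma_1\cup\Gamma_2$ with $C_1,C_2$ both non-degenerate. Stripping $C_2$ shows $\Gamma_1\setminus\Gamma_2=\Gamma\setminus\Gamma_2$ is $CB(r)$ with $r=k-(\delta+2-n)$ and lies on $C_1$; symmetrically $\Gamma_2\setminus\Gamma_1\subseteq C_2$ is $CB(r)$, and one checks $r\geq\delta+1-n$, so condition ii) of Corollary \ref{CGreco} is fulfilled. Since $p_a(C_i)\leq 1$ (in fact $0$ in the rational cases, by \eqref{pa3}--\eqref{pa4}) and $m\leq 5k-11$, condition i) of Corollary \ref{CGreco} holds for $\Gamma_1\setminus\Gamma_2$ as soon as $|\Gamma_1\setminus\Gamma_2|<\delta r$ (roughly); if this holds, Corollary \ref{CGreco} contradicts the fact that $\Gamma_1\setminus\Gamma_2$ is $CB(r)$. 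If instead $|\Gamma_1\setminus\Gamma_2|\geq\delta r$, then $|\Gamma_2|=m-|\Gamma_1\setminus\Gamma_2|$ has size only $\approx k$, so $|\Gamma_2\setminus\Gamma_1|\leq|\Gamma_2|<\delta r$ and Corollary \ref{CGreco} applies instead to $\Gamma_2\setminus\Gamma_1$ on $C_2$ — again a contradiction (the two sizes cannot both exceed $\approx\delta r$ because they add up to $m\leq 5k-11$). This proves the lemma. I expect the main obstacle to be exactly this last bookkeeping at borderline values (in particular $k=7$, $m=24$ in $\mathbb{P}^3$, and the $\deg C=9$ configurations), where the inequalities in condition i) of Corollary \ref{CGreco} become tight and one must verify, case by case, that one of Corollary \ref{CGreco}, Lemma \ref{minumCB} and Remark \ref{notCB} closes the argument; a secondary point is checking that Proposition \ref{LGreco} genuinely applies to the non-complete-intersection components, which is ensured by their smoothness together with the $k$-normality provided by \cite{GLP}.
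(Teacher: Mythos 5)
Your overall strategy is a legitimate variant of the paper's: both arguments hinge on the observation that the two relevant $CB(r)$ subsets cannot simultaneously be large because their cardinalities sum to at most $m\leq 5k-11$. The paper then sends the small one to Theorem \ref{Th3} or Proposition \ref{allineati} (it lies on a curve of degree $\leq 2$ or on a line) and gets a contradiction from Lemma \ref{notonlyless2} or Remark \ref{notCB}, invoking Corollary \ref{CGreco} only once, in the borderline case $k=7$, $m=24$ with two quartics in $\mathbb{P}^3$. You instead feed the small set directly into the Greco-type criterion on the non-degenerate component itself. The numerics you sketch do close (e.g.\ for two quartics in $\mathbb{P}^3$ one of the two sets has fewer than $4(k-3)$ points and $8(k-3)>5k-11$ for $k\geq 5$), and your preliminary reductions ($\Gamma\subseteq C_1\cup C_2$, degeneracy counts, the $\mathbb{P}^4$ cubic case) match the paper's.

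There is, however, a genuine gap in the step that makes your engine run. First, the assertion that every non-degenerate integral curve of degree $\leq 4$ that can occur is smooth is false: a non-degenerate integral quartic in $\mathbb{P}^3$ with $p_a=1$ and geometric genus $0$ (a nodal or cuspidal $(2,2)$ complete intersection) is a standing possibility. The conclusion you need (invertible dualizing sheaf) survives, because such a quartic is a complete intersection and the remaining non-degenerate types (twisted cubic, smooth rational quartic in $\mathbb{P}^3$, rational normal quartic in $\mathbb{P}^4$) are smooth --- but that dichotomy has to be stated, since it determines which tool applies. Second, and more seriously, $k$-normality from \cite{GLP} only supplies the surjectivity hypothesis of Proposition \ref{LGreco}; it says nothing about the other hypothesis $H^1(C,\mathcal{O}_C(k)(-Z))=H^1(C,\mathcal{O}_C(k)(-E))=0$, which in Corollary \ref{CGreco} is extracted from \cite[Theorem 1.5]{G} for complete intersections. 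For the components that are not complete intersections (twisted cubics, smooth rational quartics, rational normal quartics) you must verify this vanishing by hand --- it is an easy Riemann--Roch computation on $\mathbb{P}^1$ ($k\delta-|Z|\geq -1$, which your cardinality bounds do give), but as written nothing in your argument establishes it. Finally, the ``extra shapes'' you defer (in particular a third cubic $D_3$ when $\deg C=9$, which the paper disposes of with the separate count $3(2(k-4)+2)>5k-11$) need their own explicit treatment; ``iterating the argument once if need be'' is not enough at the borderline values where, as you yourself anticipate, the inequalities are tight.
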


\begin{proof} Let us start with the case of two curves of degree $4$, $C_1$ and $C_2$. By Remark \ref{CBoutcurve}, in $\mathbb{P}^4$ the sets $C_i\cap\Gamma$, $i=1,2$, are $CB(k-2)$. Moreover, not both of theme can have cardinality greater than or equal to $3(k-2)$, otherwise for any $k\geq 7$ we would have $m\geq 6(k-2)>5k-11 \geq |\Gamma|$. Then, by Theorem \ref{Th3}, at least one among $C_1\cap\Gamma$ and $C_1\cap\Gamma$ must lie on a curve of degree $2$, but this is not possible by Lemma \ref{notonlyless2}. In $\mathbb{P}^3$, if $k\geq 8$, in the decomposition of $C$ it can appear also a line $\ell$. But the set $\Gamma\cap \ell$ would be $CB(k-6)$ and this is impossible by Remark \ref{notCB}. So we may have only the two curves of degree $4$, $C_1$ and $C_2$, and the sets $C_i\cap\Gamma$, $i=1,2$, are $CB(k-3)$. If $k\geq 8$ these two sets can not have both cardinality greater than or equal to $3(k-3)$; otherwise we would have $m\geq 6(k-3)>5k-11$ and, as before, we would conclude that at least one of these sets lies on a curve of degree $2$. If $k=7$ ($m=24$), we can find again a contradiction by Corollary \ref{CGreco}. The computation is the same as the one at the end of Lemma \ref{notonlyless2}, since now $k-3=4>3$ and at least one of the two curves must contains at most $12$ points of $\Gamma$.

Let us consider the case of two irreducible curves of degree $3$, $D_1$ and $D_2$. It is easy to see that in $\mathbb{P}^4$ this is not possible. Indeed, on a such curve we can have at most $4$ points of $\Gamma$, so $|\Gamma|$ would be at most $8$.

In order to deal with this configuration in $\mathbb{P}^3$, let us consider the set $\Gamma\backslash (D_1\cup D_2)$ of points of $\Gamma$ not on these two curves. If $\Gamma\backslash (D_1\cup D_2)\neq\emptyset$, then it is $CB(k-4)$ and so it must contain at least $5$ points. Thus it can not lie neither on a line nor on an irreducible conic. Furthermore, it can not lie even on two lines, otherwise the set of points of $\Gamma$ on each of them would be $CB(k-5)$. If $k\geq 8$ (that is, $\deg C=9$) we could have also the following further possibilities: (a) three lines $\ell_1,\ell_2,\ell_3$; (b) a line $\ell$ and an irreducible conic $C_2$; (c) another irreducible curve $D_3$ of degree $3$. 

Case (a) does not occur since, by Proposition \ref{pointsoutofCB}, any set $\Gamma\cap \ell_i$, for $i=1,2,3$, would be $CB(k-6)$. Also case (b) does not occur by the same reason. Namely, the set $\Gamma\cap \ell$ would be $CB(k-5)$. As for case (c), we note that the sets $\Gamma\cap D_3$, $\Gamma\cap D_3$ and $\Gamma\cap D_3$ are $CB(k-4)$ by Remark \ref{CBoutcurve} and not all of them can have cardinality greater then or equal to $2(k-4)+2$ (since, otherwise, $|\Gamma|\geq 6(k-4)+6>5k-11\geq |\Gamma|$). Thus, by Proposition \ref{allineati}, at least one of these sets would lie on a line and we would have the configuration of one of the previous cases.

Finally, let us suppose $\Gamma\backslash (D_1\cup D_2)=\emptyset$, i.e. the non-empty components of $C$ (with respect to the points of $\Gamma$) are only the two irreducible curves of degree 3. We can prove this case does not occur by the same argument in case $(c)$. More precisely, the sets $\Gamma\cap D_i$, for $i=1,2$, are $CB(k-2)$ and, arguing as before, we see that at least one of these sets must lie on a curve of lower degree. But this is not possible by Lemma \ref{notonlyless2}. \end{proof}

\begin{lemma}\label{not34}
Under the hypotheses of Proposition \ref{Th5n34}, let $C$ be the curve of Lemma \ref{curve9}. Then in the decomposition of $C$ can not appear simultaneously an irreducible curve $C_3$ of degree $3$ and an irreducible curve $C_4$ of degree $4$ containing points of $\Gamma$.
\end{lemma}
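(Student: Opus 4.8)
The plan is to argue by contradiction, assuming $C=C_3\cup C_4\cup R$ with $C_3$ an irreducible cubic, $C_4$ an irreducible quartic, both meeting $\Gamma$, and $R$ the (possibly empty) union of the remaining components, so $\deg R=\deg C-7\in\{0,1,2\}$. By Theorem \ref{Th4} we may assume $m\geq 4k-4$ (hence $m\geq 24$, as $k\geq 7$), and we may assume $\Gamma\not\subseteq C_3$ and $\Gamma\not\subseteq C_4$, since otherwise $\Gamma$ lies on a curve of degree at most $4$ and there is nothing to prove; in particular $\Gamma\cap C_3$ and $\Gamma\cap C_4$ are proper subsets of $\Gamma$, so Remark \ref{CBoutcurve} applies to them. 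The recurring mechanisms will be: a counting or dimension estimate contradicting $|\Gamma|\geq 24$; a subset of $\Gamma$ lying on a line, plane or low-degree curve while satisfying a Cayley--Bacharach condition of impossibly high level (Lemma \ref{minumCB}, Remark \ref{notCB}); and, for the decisive case, Corollary \ref{CGreco} or Proposition \ref{LGreco} applied to the points of $\Gamma$ on $C_4$.

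I would first dispose of $n=4$, where $\deg C=8$ and $R$ is a single line. An irreducible cubic spans at most a $3$-plane, so $|\Gamma\cap C_3|\leq\alpha\leq 4$, and $|\Gamma\cap R|\leq 2$ by Remark \ref{pointsonline}. If $C_4$ is degenerate then $|\Gamma\cap C_4|\leq 4$, so $|\Gamma|\leq 10<24$, absurd. If $C_4$ is non-degenerate then, since it is cut out by quadrics, Remark \ref{CBoutcurve} shows $\Gamma\setminus(\Gamma\cap C_4)$ is a non-empty $CB(k-2)$ set contained in $(\Gamma\cap C_3)\cup(\Gamma\cap R)$, hence of cardinality at most $6$; this contradicts Lemma \ref{minumCB}, because $6<k$.

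The substantial case is $n=3$, where $\deg C\in\{8,9\}$ and $\deg R\in\{1,2\}$. Here a somewhat lengthy but elementary analysis --- combining Remark \ref{CBoutcurve} and Proposition \ref{pointsoutofCB} (to track how the Cayley--Bacharach level of a subset drops when points on a curve or a linear space are removed), Remark \ref{notCB}, and the crude bounds coming from $\alpha\leq 4$ --- reduces, after a careful ordering of the cases, to the situation $\Gamma\subseteq C_3\cup C_4$ with $C_3$ a twisted cubic and $C_4$ an irreducible non-degenerate quartic in $\mathbb{P}^3$. The two key points are: (a) if some point of $\Gamma$ lies on $R$ but not on $C_3\cup C_4$, then isolating such points on one line (or the conic) of $R$ by successively removing $\Gamma\cap C_4$, $\Gamma\cap C_3$ and, if $R$ is a pair of lines, the other line, lowers the level by at most $3+2+1=6$ and leaves a non-empty $CB(k')$ set with $k'\geq 2$ (a conic or a two-line component forces $\deg C=9$, hence $k\geq 8$), which a line or a conic cannot carry by Remark \ref{notCB}; and (b) if $C_3$ or $C_4$ is a plane curve, then either both are, forcing $|\Gamma|\leq 8<k+2$, or the complement in $\Gamma$ of the non-degenerate one is a $CB$ set of at most $4$ points but of level $\geq k-3$, impossible by Lemma \ref{minumCB}.

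For the decisive step, put $A_3=\Gamma\setminus(\Gamma\cap C_4)\subseteq\Gamma\cap C_3$ and $A_4=\Gamma\setminus(\Gamma\cap C_3)\subseteq\Gamma\cap C_4$. By Remark \ref{CBoutcurve}, $A_3$ is $CB(k-3)$ and $A_4$ is $CB(k-2)$; by Lemma \ref{minumCB}, $|\Gamma\cap C_3|\geq|A_3|\geq k-1$, whence $|A_4|=m-|\Gamma\cap C_3|\leq(5k-11)-(k-1)=4k-10$. So $A_4$ is a set of at most $4k-10$ points on the integral non-degenerate quartic $C_4$ which is $CB(k-2)$. If $C_4$ is a complete intersection (the elliptic quartic, $p_a=1$), conditions i) and ii) of Corollary \ref{CGreco} hold with ambient level $k-2$, since $4(k-2)=4k-8>4k-10\geq|A_4|$ and $k-2\geq 4+1-3$ for all $k\geq 7$; hence $A_4$ is not $CB(k-2)$, a contradiction. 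If $C_4$ is not a complete intersection, then $p_a(C_4)\leq 1$ by the Castelnuovo bound (\ref{pa3}) and $\omega_{C_4}$ is invertible ($C_4$, being an integral quartic space curve with $p_a\leq 1$, is a local complete intersection), and the same numerical inequalities let us apply Proposition \ref{LGreco} directly, again forcing $A_4$ not to be $CB(k-2)$. This contradiction proves the lemma. The main obstacle is precisely this last step: one must treat separately the complete intersection (elliptic) quartic, via Corollary \ref{CGreco}, and the remaining irreducible space quartics --- chiefly the rational quartic --- via the more flexible Proposition \ref{LGreco}, after verifying its Gorenstein hypothesis and the vanishing of the relevant $H^1$; the numerical margins are narrow but hold throughout the range $k\geq 7$.
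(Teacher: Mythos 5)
Your argument is correct, and its overall skeleton (quick dismissal of $\mathbb{P}^4$, elimination of residual lines and conics via Remark \ref{CBoutcurve}, Proposition \ref{pointsoutofCB} and Remark \ref{notCB}, degenerate-component exclusion via $\alpha\leq 4$ and Lemma \ref{minumCB}) coincides with the paper's; the genuine divergence is in the decisive step once $\Gamma\subseteq C_3\cup C_4$ with both curves non-degenerate. The paper argues combinatorially: the sets $\Gamma\setminus C_4$ and $\Gamma\setminus C_3$ are $CB(k-3)$ and $CB(k-2)$ respectively, and cannot both have cardinality $\geq 3(k-3)$ and $\geq 4(k-3)$ since $7(k-3)>5k-11$; hence by Theorem \ref{Th3} or \ref{Th4} one of them drops onto a curve of strictly smaller degree, which is forbidden by the earlier Lemmas \ref{nolineconics} and \ref{notonlyless2}. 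You instead extract the sharper bound $|\Gamma\setminus C_3|\leq 4k-10$ from Lemma \ref{minumCB} applied to $\Gamma\setminus C_4$, and then kill $\Gamma\setminus C_3$ directly on the quartic via the cohomological criterion. Your route is self-contained at this stage (it does not lean on Lemmas \ref{nolineconics} and \ref{notonlyless2}), but it pays for this by having to split the quartic into the complete-intersection (elliptic, $p_a=1$) case, where Corollary \ref{CGreco} applies verbatim, and the smooth rational case, where you must return to Proposition \ref{LGreco} and check the Gorenstein, $H^1$-vanishing and surjectivity hypotheses by hand (the $H^1$ vanishing is immediate on a smooth rational curve since $\deg(kH-Z)\geq 10>2p_a-2$, and surjectivity follows from \cite[Corollary p.492]{GLP} exactly as in Corollary \ref{CGreco}, so this is a real but manageable extra verification). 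The paper's counting argument avoids this dichotomy entirely, at the cost of invoking the two preceding exclusion lemmas; both approaches close the case correctly, and your numerical margins ($4k-10<4(k-2)$) do hold for all $k\geq 7$.
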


\begin{proof} This configuration does not occur in $\mathbb{P}^4$ since, by Remark \ref{CBoutcurve} and Proposition \ref{pointsoutofCB}, the set $\Gamma\cap C_3$ would be at least $CB(k-3)$, which is impossible by Remark \ref{notCB}.

In $\mathbb{P}^3$ with $k=7$ the curve $C$ has degree $8$ (see Lemma \ref{curve9}). Let us suppose that $C=C_3\cup C_4 \cup \ell$, with $\ell$ a line. If $\Gamma\cap \ell\neq\emptyset$, then this set would be $CB(k-5)$ by Remark \ref{CBoutcurve}, but this is not possible by Remark \ref{notCB}. If $k\geq 8$, then the curve $C$ has degree $9$. If in the decomposition of $C$ it appears an irreducible conic that intersects $\Gamma$, then the same argument made for case $k=7$ shows that this intersection must be $CB(k-5)$; again impossible by Remark \ref{notCB}.  If, instead, in the decomposition of $C$ there appear two skew lines, $\ell_1$ and $\ell_2$, intersecting $\Gamma$, then each set $\Gamma\cap \ell_i$, for $i=1,2$, would be $CB(k-6)$; once again impossible by Remark \ref{notCB}.

So we can have only points of $\Gamma$ on $C_3$ and on $C_4$. It follows that the points on $C_3$ are $CB(k-3)$ and that the points on $C_4$ are $CB(k-2)$. These sets of points can not have at the same time cardinality greater than or equal to $3(k-3)$ and $4(k-3)$ respectively, otherwise their sum would overcome the maximal cardinality of $\Gamma$. Thus, either the points on $C_3$ lie on a curve of degree lower that $3$, or the points on $C_4$ lie on a curve of degree lower that $4$. Both these situations are not allowed by one of the previous lemmas. \end{proof}

Putting all these lemmas together, we can now prove Proposition \ref{Th5n34}.

\begin{proof}[Proof of Proposition \ref{Th5n34}] Lemma \ref{curve9} ensures that $\Gamma$ lies on a curve $C$ of degree $8$ or $9$, but Lemmas \ref{notgeq5} and \ref{no5atall} imply that $\Gamma$ can not intersect $C$ in an irreducible component of degree greater than $4$ (in particular, $C$ can not be irreducible). The components of $C$ containing points of $\Gamma$ can not be only lines and conics by Lemma \ref{nolineconics}. So we have that among these components it must appear at least one of degree $3$ or $4$. Actually, by Lemmas \ref{notonlyless2}, \ref{notwo} and \ref{not34}, $\Gamma$ is contained in exactly one of these components. \end{proof}

\subsection{Proof of the main theorem}

We can finally prove Theorem \ref{Th5}.

\begin{proof}[Proof of Theorem \ref{Th5}] If $n=2$ the assertion follows by \cite[Lemma 2.5]{LP}. The cases $n=3$ and $n=4$ with $\alpha\leq 4$, where $\alpha$ is defined as in (\ref{alpha}), are proved in Proposition \ref{Th5n34}. Let us suppose now $n\geq 3$ and let us define $\alpha$ as in (\ref{alpha}) if $n\in\{3,4\}$ (i.e. the maximum number of points of $\Gamma$ lying on a hyperplane). If $n\geq 5$, let us define $\alpha$ as
\begin{equation}\label{alpha5}
\alpha:=\text{max number of points of }\Gamma \text{ lying on a same } 4\text{-plane}.
\end{equation}
In this last case obviously $\alpha \geq 5$, but we can now assume $\alpha \geq 5$ even in the cases $n\in\{3,4\}$.

Let us fix a linear subspace $H$, of the right dimension, that contains $\alpha$ points of $\Gamma$ and let $\Gamma_H=\Gamma\cap H$. If $\alpha=m$ we are in one of the cases already dealt with. We can therefore assume $\alpha<m$ and then we have $m-\alpha$ points of $\Gamma$ outside from $H$. Let us denote by $\Gamma'$ the set of these points. The set $\Gamma'$ is $CB(k-1)$ by Proposition \ref{pointsoutofCB} and then $m-\alpha\geq k+1$ by Remark \ref{allineati}. Furthermore, $m-\alpha\leq 5k-11-\alpha\leq 5(k-1)-11$ and so, by induction on $k$, $\Gamma'$ lies on a reduced curve $C_4$ of degree $4$.

We claim that $\Gamma_H$ is $CB(k-s)$ with $s\leq 4$. Indeed, if $C_4$ is irreducible, then it is contained in a linear subspace of dimension $2\leq t \leq 4$. By Remark \ref{CBoutcurve}, the curve $C_4$ is cut out by hypersurfaces of degree $6-t\leq 4$, thus $\Gamma_H$ is $CB(k+t-6)$. If the curve $C_4$ decomposes in an irreducible curve $C_3$ of degree $3$ and a line $\ell$, by the same reason,  the set $\Gamma_H$ is either $CB(k+t-5)$, with $2\leq t\leq 3$, if $\Gamma'$ does not intersect $\ell$, or $CB(k+t-6)$ otherwise. Finally, if $C_4$ decomposes in lines and conics, then the set $\Gamma'$ lies at most on $4$ distinct linear subspaces and hence, by Proposition \ref{pointsoutofCB}, $\Gamma_H$ is at least $CB(k-4)$. This proves the claim.

Now, if $m-\alpha\geq 3(k-1)$, then $\alpha=m-(m-\alpha)\leq 2k-8 < 2(k-s)+1$ with $s\leq 4$, thus $\Gamma_H$  lies on a line by Remark \ref{allineati}. But, by the very definition of $\alpha$, it must be $m=\alpha$ and this is impossible since we have points of $\Gamma$ outside from $H$. If, instead, $m-\alpha\leq 3(k-1)-1$, by Theorem \ref{Th3}, the set $\Gamma'$ lies on a reduced curve of degree $2$ and thus, by Proposition \ref{pointsoutofCB}, $\Gamma_H$ is $CB(k-1)$ or $C(k-2)$, depending on whether $\Gamma'$ lies on one or two linear subspaces. If also $\Gamma_H$ lies on a curve of degree $2$ the theorem is proved. Otherwise, by Theorem \ref{Th3}, it must be at least $\alpha\geq 3(k-2)$ and then $m-\alpha\leq 2k-5<2(k-1)+1$, so $\Gamma'$ lies on a line. This implies that $\Gamma_H$ is certainly $CB(k-1)$ and it must lie on a curve of degree $3$, because otherwise, by Theorem \ref{Th4}, we would have $\alpha\geq 4(k-1)-4$ and consequently $m-\alpha\leq k-3$; this is impossible since we pointed out that $m-\alpha\geq k+1$. Thus $\Gamma$ lies on a curve of degree $4$. \end{proof}

\begin{remark}
As \cite[Theorem 1.9]{SU}, compared with Theorem \ref{Th3}, shows, bound (\ref{boundLP}) for $h=3$ is not sharp, at least not for all the values of $k$. In fact, we already noted that for $k\in\{1,2\}$ the bound $\frac{5}{2}k+1$ is better. Besides, the latter is been found on the trace of the bound $2k+1$ in \cite[Lemma 2.4]{BCD}, that, in this case, coincides with (\ref{boundLP}) for $h=2$. It is therefore natural suppose that it is possible to find a bound of the form $\alpha\cdot k+1$, with $\alpha$ depending on $h$, for the cardinality of a set $\Gamma$ that is $CB(k)$, which forces $\Gamma$ to lie on on a curve of degree $h-1$. In addition, this bound should improve (\ref{boundLP}) for a range of low values of $k$ which should enlarge as $h$ growing. It is moreover an interesting question understanding, in case of affirmative answer, if there exists a function of $h$ that describe $\alpha(h)$ (see also \cite[Questions 7.3 and 7.4]{LU}).
\end{remark}

\begin{remark}
We expect that the answer to Question \ref{question} would be affirmative in any $\mathbb{P}^n$ even for values of $h$ higher than $5$. The techniques used in this paper involve, on the one hand, the study of cases in which a curve of degree $h-1$ may reduce and, on the other hand, they require the study of a suitable curve of higher degree passing though all the points of $\Gamma$. For both of these situations the number of cases to be analysed grows very quickly with the increase of $h$. For this reason, we believe that these techniques, although they could work, are not the most appropriate to deal with this issue.
\end{remark}
\bigskip

\section{Applications}

Aim of this section is to prove Theorems \ref{THlinearseries}, \ref{THcorrispnull} and to state and prove the results about complete intersection varieties we mentioned in the introduction.

\subsection{Linear series on curves}

In order to prove Theorem \ref{THlinearseries}, we need the following lemma about the Cayley-Bacharach property for a general divisor on a suitable linear series on a curve that moves on a smooth surface in $\mathbb{P}^3$.

\begin{lemma}\label{grnSP}
Let $S\subset\mathbb{P}^3$ be a smooth surface, $C$ an integral curve on $S$ and $g^r_n$ a base point free special linear series on $C$ that is not composed with an involution if $r\geq 2$. Then the general divisor $D\in g^r_n$ satisfies the Cayley-Bacharach property with respect to the dualizing sheaf of $C$. Moreover, if $|\mathcal{O}_C\otimes \mathcal{O}_S(C)|$ is base point free then $D$ also satisfies the Cayley-Bacharach property with respect to $|K_S|$.
\end{lemma}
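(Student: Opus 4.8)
The plan is to unwind the definition of the Cayley-Bacharach condition in terms of the failure-of-independence of imposing points, and to feed in the classical theory of special linear series via the Brill–Noether / "not composed with an involution" hypothesis. Write $D = P_1 + \cdots + P_n \in g^r_n$ general, so that the $P_i$ are distinct (base point freeness plus generality), and suppose $D' \in |\omega_C|$ passes through all the $P_j$ with $j \neq i$; we must show $P_i \in D'$. Recall the standard reformulation: the set $\{P_1,\dots,P_n\}$ is $CB$ with respect to $|\omega_C|$ if and only if for each $i$ the point $P_i$ is \emph{not} in the base locus of $|\omega_C(-\sum_{j\neq i}P_j)|$ moved to the level of imposing conditions, equivalently $h^0(\omega_C(-D)) = h^0(\omega_C(-D + P_i))$ for every $i$; by Riemann–Roch on $C$ (which applies since $C$ is integral on a smooth surface, hence Gorenstein with $\omega_C$ invertible), this is equivalent to $h^0(\mathcal O_C(D)) = h^0(\mathcal O_C(D - P_i))$ for every $i$, i.e. to $D - P_i$ not being special-in-the-naive-sense... more precisely to the statement that no point $P_i$ is a base point of the "residual" series. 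So the core of the proof is the classical fact: \emph{if $g^r_n = |D|$ is a base point free special linear series that is not composed with an involution when $r \ge 2$, then for a general divisor $D$ and any point $P_i$ in its support, $h^0(\mathcal O_C(D-P_i)) = h^0(\mathcal O_C(D)) - 1$.} I would quote this (it is essentially the General Position Lemma of Arbarello–Cornalba–Griffiths–Harris, or the statement that a general divisor in a base-point-free $g^r_n$ not composed with an involution imposes independent conditions on $|K_C|$); indeed $D$ imposing independent conditions on $|K_C - D + P_i| \supset |K_C - D|$ is exactly $h^1(\mathcal O_C(D)) = h^1(\mathcal O_C(D - P_i)) + \dots$, and chasing Riemann–Roch gives the $CB$ property with respect to $\omega_C$.

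For the second assertion, I would use the hypothesis that $|\mathcal O_C \otimes \mathcal O_S(C)|$ is base point free together with adjunction on $S$: we have $\omega_C = \omega_S \otimes \mathcal O_S(C) \otimes \mathcal O_C = K_S|_C \otimes (\mathcal O_C \otimes \mathcal O_S(C))$. Now suppose a surface $T \in |K_S|$ passes through all $P_j$, $j \neq i$. Its restriction $T|_C$ is an effective divisor in $|K_S|_C|$ containing $\sum_{j\neq i} P_j$. To conclude $P_i \in T|_C$, hence $P_i \in T$, I would argue that adding a general member of the base-point-free system $|\mathcal O_C \otimes \mathcal O_S(C)|$ — which, by generality of $D$ and Bertini-type reasoning, can be taken disjoint from $\{P_1,\dots,P_n\}$ — produces an effective divisor in $|\omega_C|$ through all $P_j$ with $j \neq i$; then the first part of the lemma (the $CB$ property for $\omega_C$) forces $P_i$ to lie on it, and since the general member of $|\mathcal O_C\otimes \mathcal O_S(C)|$ avoids $P_i$, we get $P_i \in T|_C \subseteq T$. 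This is precisely the argument of \cite[Lemma 3.1]{LP}, which I would follow verbatim; the only place where the surface structure enters is the adjunction identity and the base-point-freeness of $|\mathcal O_C \otimes \mathcal O_S(C)|$, which lets us "lift" a $K_S$-condition to an $\omega_C$-condition.

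The main obstacle, and the step requiring the most care, is the first part: verifying that a general $D \in g^r_n$ genuinely imposes independent conditions on $|\omega_C|$ — equivalently that the $P_i$ are in "general position" relative to the canonical embedding — under only the hypotheses "base point free, special, not composed with an involution (when $r\ge2$)". For $r = 1$ this is the statement that a general fiber of a base-point-free pencil is reduced with points imposing independent conditions on the canonical series, which can fail precisely when the pencil is "composed with an involution" in a degenerate sense, but a base-point-free $g^1_n$ is automatically fine; for $r \ge 2$ one invokes the uniform position / general position theorem, whose hypotheses are exactly non-compositeness with an involution. I would cite the relevant statement from \cite{LP} (or ACGH) rather than reprove it. Everything else — Riemann–Roch on the Gorenstein curve $C$, the adjunction formula on $S$, and the Bertini argument that a general member of a base-point-free linear system on $C$ avoids a fixed finite set — is routine and I would state it tersely.
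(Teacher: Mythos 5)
Your proposal is correct and takes essentially the same route as the paper: the paper's proof of this lemma is simply a citation of \cite[Lemma 3.1]{LP}, and your argument is a faithful reconstruction of that proof (Riemann--Roch on the Gorenstein curve $C$ reducing the $\omega_C$-Cayley--Bacharach property to general position of the support of $D$, then adjunction $\omega_C=K_S|_C\otimes(\mathcal O_C\otimes\mathcal O_S(C))$ plus base point freeness of $|\mathcal O_C\otimes\mathcal O_S(C)|$ to transfer the condition to $|K_S|$), with the key general-position input deferred to the same reference. No gaps to flag.
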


\begin{proof}
See \cite[Lemma 3.1]{LP}.
\end{proof}

Combining the previous lemma with Theorem \ref{Th5} we can extend \cite[Theorem 1.5]{LP} proving Theorem \ref{THlinearseries}.

\begin{proof}[Proof of Theorem \ref{THlinearseries}] By Lemma \ref{grnSP}, the general divisor $D\in g^r_n$ is $CB(d-4)$. By \cite[Lemma 2.5]{LP} and  Theorem \ref{Th5} there exists an integer $h$, with $1\leq h\leq 4$, such that $h(d-h-1)\leq n \leq(h+1)(d-h-2)-1$ and $D$ lies on a curve $E$ of degree $h$. Since $S$ is smooth and of general type, it  does not contain infinitely many curves of degree $h\leq 3$. Thus,  by Bezout's theorem, we also have $n\leq hd$ when $h\leq 3$. Indeed, if $n\geq hd+1$, then a component of $E$ would be contained in $S$ and, since the $g^r_n$ is base point free, $S$ would be covered by the family of such components. Moreover, the same argument holds in the case $h=4$ when the curves of the family covering the surface $S$ are not degenerate. Finally, if the degree is $h=4$ and the curves of the family are degenerate, then the gonality of these curves is at most $3$; i.e., using the terminology of \cite{BDELU}, cov.gon$(X)\leq 3$. By \cite[Theorem A]{BDELU} it follows that cov.gon$(X)\geq d-2$, and hence $d\leq 5$. But since we are in the case $h=4$, (\ref{hgrnOnSurface}) implies that $d\geq 11$, a contradiction.
\end{proof}

\subsection{Correspondences with null trace}\label{subcor}

Theorem \ref{THcorrispnull} concerns correspondences with null trace. Before presenting the proof of the theorem, we briefly describe this notion referring the reader to \cite{LP} and \cite{B} for further details.

Let $X,Y$ be two projective varieties of dimension $n$, with $X$ smooth and $Y$ integral. A \emph{correspondence of degree $d$} on $Y\times X$ is an integral $n$-dimensional variety $\Sigma\subset Y\times X$ such that the projections $\pi_1:\Sigma\to Y, \pi_2:\Sigma\to X$ are generically finite dominant morphisms and $\deg\pi_1=d$. Let $U\subset Y_{\text{reg}}$ be an open subset such that $\dim\pi_1^{-1}(y)=0$ for every $y\in U$. Associate to $\Sigma$ there is a map $\gamma:U\to X^{(d)}$, where $X^{(d)}$ is the $d$-fold symmetric product\footnote{Let $S_d$ be the symmetric group of $d$ elements. The $d$-fold symmetric product of a variety $X$ is $X^{(d)}=X^d/S_d$.}, defined by $\gamma(y)=P_1+\dots+P_d$, where $\pi_1^{-1}(y)=\{(y,P_i)| i=1,\dots,d\}$. In  \cite[Section 2]{M} Mumford defines a \emph{trace map} $Tr_{\gamma}:H^{n,0}(S)\to H^{n,0}(U)$ (see also \cite[Section 2]{LP} and \cite[Section 4]{B}) linked to the map $\gamma$. We say that $\Sigma$ is a \emph{correspondence with null trace} if $Tr_{\gamma}=0$.\\

We are now ready to prove the following proposition that is a slightly stronger version of Theorem \ref{THcorrispnull}.

\begin{proposition}\label{strongTHcorrispnull}
Let $n\geq 3$ and let $X\subset\mathbb{P}^n$ be a smooth hypersurface of degree $d\geq n+2$. Let $\Gamma$ be a correspondence of degree $m$ with null trace on $Y\times X$. If $m\leq h(d-n-h+2)-1$ for $2\leq h\leq 5$, then the only possible values of $m$ are $(s-1)(d-n-s+3)\leq m\leq (s-1)d$ for $2\leq s\leq h$.
\end{proposition}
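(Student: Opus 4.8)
The plan is to reduce the statement on correspondences with null trace to Theorem~\ref{ThA} (equivalently Theorems~\ref{Th3}, \ref{Th4}, \ref{Th5}) applied to the support of a general divisor of the associated map $\gamma$. First I would recall, following \cite[Section~2]{LP} and \cite{B}, that if $\Sigma\subset Y\times X$ is a correspondence of degree $m$ with null trace, then the general fiber $\gamma(y)=P_1+\dots+P_m$ has reduced support $\Gamma_y=\{P_1,\dots,P_m\}$ (for $y$ general), and this support satisfies the Cayley--Bacharach condition with respect to $|\omega_X|=|\mathcal{O}_X(d-n-1)|$; this is the content of the trace-vanishing analysis in \cite{M} and \cite[Section~2]{LP}. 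Since $X\subset\mathbb{P}^n$ is a smooth hypersurface and $d\geq n+2$, the restriction map $H^0(\mathbb{P}^n,\mathcal{O}_{\mathbb{P}^n}(d-n-1))\to H^0(X,\mathcal{O}_X(d-n-1))$ is an isomorphism, so $\Gamma_y$ is in fact $CB(d-n-1)$ as a subset of $\mathbb{P}^n$. Set $k:=d-n-1$, so the hypothesis $m\leq h(d-n-h+2)-1$ reads $m\leq h(k-h+3)-1$.

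Next I would run the following dichotomy for each $h$ from $2$ up to the value in the statement. If $m\leq h(k-h+3)-1$, then by Proposition~\ref{allineati} (for $h=2$) or Theorem~\ref{Th3}, \ref{Th4}, \ref{Th5} (for $h=3,4,5$), the support $\Gamma_y$ lies on a reduced curve $E_y$ of degree $h-1$. Because $\Sigma$ is irreducible and the $\Gamma_y$ move in an algebraic family over an open subset of $Y$, one can choose these curves $E_y$ in a family as well (taking, say, the minimal-degree curve through $\Gamma_y$, or spreading out over a suitable incidence variety); hence $X$ is covered by the images $\pi_2(\Sigma)$-pieces lying on a family of curves of degree $\leq h-1$. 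Now I would argue component-by-component: let $s-1$ be the degree of the irreducible component $E_y'$ of $E_y$ that carries the ``moving'' part of $\Gamma_y$, so $2\leq s\leq h$. On one hand, a general such $E_y'$ is not contained in $X$ only if $\deg(E_y'\cap X)=(s-1)d$ bounds the number of points of $\Gamma_y$ on it; if instead $E_y'\subset X$ for general $y$, then $X$ is swept out by this family of curves of degree $s-1$, and one bounds the relevant count of points using that a smooth hypersurface of degree $d\geq n+2$ is of general type with controlled covering gonality (invoking \cite[Theorem~A]{BDELU} exactly as in the proof of Theorem~\ref{THlinearseries} to rule out low-degree sweeping curves, forcing $s-1$ small). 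This yields the upper bound $m\leq (s-1)d$.

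For the lower bound $m\geq (s-1)(d-n-s+3)$ I would use the Cayley--Bacharach property once more: the points of $\Gamma_y$ lying on the component $E_y'$ of degree $s-1$ form a subset that, by Proposition~\ref{pointsoutofCB}/Remark~\ref{CBoutcurve} (removing the remaining components, a plane configuration of total degree at most $h-s$, or using that a non-degenerate integral curve of degree $s-1$ is cut out by hypersurfaces of controlled degree), is itself $CB$ with respect to $|\mathcal{O}(k')|$ for a suitable $k'\geq k-(s-2)$ or $k'\geq k-(s-1)$; then Lemma~\ref{minumCB} together with the numerical bookkeeping used in the proofs of Theorems~\ref{Th3}--\ref{Th5} (e.g.\ the counts $q_i\geq k-j$ appearing there) forces $m=|\Gamma_y|\geq (s-1)(k-s+4)=(s-1)(d-n-s+3)$. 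Running this for $s=2,\dots,h$ produces exactly the list of admissible intervals, and the case $h=5$ of Theorem~\ref{THcorrispnull} follows by taking $h=5$ and noting $5(d-n-3)-1=5(d-n)-16$.

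The main obstacle I anticipate is the ``family'' step: upgrading the pointwise conclusion ``$\Gamma_y$ lies on a curve of degree $h-1$'' to a genuine algebraic family of such curves over (an open subset of) $Y$, and then correctly separating the component of $E_y$ that moves with $y$ from components that might be fixed or might already lie in $X$. This requires a spreading-out/Hilbert-scheme argument and a careful analysis of which components contribute points to $\Gamma_y$ — essentially the same bookkeeping that makes the proof of Theorem~\ref{Th5} lengthy, now carried out relative to $Y$. The invocation of \cite[Theorem~A]{BDELU} to exclude degenerate sweeping curves in the borderline degree-$4$ case is the other delicate point, exactly as in the proof of Theorem~\ref{THlinearseries}.
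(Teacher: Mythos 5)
Your skeleton matches the paper's: the fiber $\Gamma=\pi_2(\pi_1^{-1}(y))$ is $CB(d-n-1)$ by \cite[Proposition 4.2]{B}, Theorem~\ref{ThA} puts it on a low-degree curve, Bezout controls $|\Gamma\cap X|$, and components contained in $X$ are excluded because a smooth hypersurface of degree $d\geq n+2$ is of general type and has $\covgon(X)\geq d-n+1$ by \cite[Theorem A]{BDELU}. The upper bounds $m\leq (s-1)d$ come out of this exactly as in the paper. The genuine gap is in your derivation of the lower bounds $m\geq (s-1)(d-n-s+3)$. You propose to get them from Lemma~\ref{minumCB} applied to the points on the degree-$(s-1)$ component, which are $CB(k')$ for some $k'\geq k-(s-2)$, ``times bookkeeping.'' This cannot work: Lemma~\ref{minumCB} gives an additive bound of roughly $k'+2$ per non-empty $CB$ subset, and the multiplicative counts $q_i\geq k-j$ in the proofs of Theorems~\ref{Th3}--\ref{Th5} arise only when $\Gamma$ splits over several linear components. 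If $\Gamma$ lies on a single irreducible conic (say $s=3$), your mechanism yields only $m\geq k+2$, not $2(k+1)=2(d-n)$. The bound $m\geq 2(d-n)$ in that situation instead follows from the contrapositive of Proposition~\ref{allineati}: if $m\leq 2k+1$ the points would lie on a line. In general the lower bound for the $s$-th interval is the contrapositive of Theorem~\ref{ThA} with parameter $s-1$, not a consequence of Lemma~\ref{minumCB}.

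The paper organizes this cleanly by contradiction: it assumes $m$ lies in one of the gaps $(h'-1)d+1\leq m\leq h'(d-n-h'+2)-1$, applies Theorem~\ref{ThA} to put $\Gamma$ on a reduced curve of degree $h'-1$, and observes that $m\geq (h'-1)d+1$ forces, by Bezout, a component of that curve to lie inside $X$, which is then excluded by the general-type/covering-gonality argument. Your direct version can be repaired by defining $s-1$ as the \emph{minimal} degree of a reduced curve through $\Gamma$ (so that the contrapositive of Theorem~\ref{ThA} is available for the lower bound), and by taking $E_y'$ to be the union of \emph{all} components meeting $\Gamma$ rather than a single ``moving'' component; as written, your Bezout step only bounds the points on one component and does not bound $m$. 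Finally, the family step you flag as the main obstacle is handled in the paper simply by noting that $\pi_2$ is dominant and $\pi_1$ is generically finite, so the points of the general fibers lying on components inside $X$ sweep out $X$; no delicate relative construction over $Y$ is needed.
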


\begin{proof} The crucial point is that having null trace imposes a Cayley-Bacharach condition upon a correspondence. More precisely, since $K_X=(d-n-1)H$, where $H$ is an hyperplane section, then by \cite[Proposition 4.2]{B} the set $\Gamma=\pi_2(\pi_1^{-1}(y))=\{P_1,\dots,P_m\}$ is $CB(d-n-1)$, thus we get $m\geq d-n+1$ by Lemma \ref{minumCB}. Let us suppose now, by contradiction, that
\begin{equation}\label{rangeh}
(h-1)d+1\leq m \leq h(d-h-n+2)-1
\end{equation}
 for $2\leq h\leq 5$. By Theorem \ref{ThA}, $\Gamma$ lies on a curve of degree $h-1$. By Bezout's theorem this curve must have a component $E$ of degree $e\leq 4$ contained in $X$. As the  fiber $\pi_1^{-1}(y)$ is general, $X$ is covered by a family of curves of degree $e$. If $e\leq 3$,  the curves of the family would be either rational or elliptic and this is non possible since $X$ is smooth and of general type. The same holds if $e=4$ and the curves of the family are non-degenerate. On the other hand, if $e=4$ and the curves of the family are degenerate, these have gonality at most $3$, i.e., using as above the terminology of \cite{BDELU}, cov.gon$(X)\leq 3$. By \cite[Theorem A]{BDELU}, it follows that cov.gon$(X)\geq d-n+1$, and hence $d\leq n+2$. But since we are in the case $h=5$, (\ref{rangeh}) implies that $d\geq 5n+17$, a contradiction. \end{proof}

\subsection{Plane configurations}\label{subPlaneConf} Followig \cite{LU}, we call a union $\mathcal{P}=P_1\cup\dots\cup P_r\subset \mathbb{P}^n$ of positive-dimensional linear spaces a \emph{plane configuration of dimension} $\dim(\mathcal{P})=\sum\dim (P_i)$ \emph{and length} $\ell(\mathcal{P})=r$.

\begin{remark}\label{improveConjLU}
In \cite[Conjecture 1.2]{LU}, Levinson and Ullery  conjectured that given a set $\Gamma\subset\mathbb{P}^n$ of distinct points $CB(k)$, if $|\Gamma|\leq (t+1)k+1$, then $\Gamma$ lies on a plane configuration $\mathcal{P}=P_1\cup\dots\cup P_r$ of dimension $t$. In the same paper they proved the conjecture for some lower values of $t$ and $k$, given at the same time an upper bound for the length of $\mathcal{P}$. In particular, they proved the conjecture for any $k\leq 2$ and any $t\leq 3$ (cf. \cite[Theorem 1.3 (i) and (ii)]{LU}) and in the case $t=4$ and $k=3$ (cf. \cite[Theorem 1.3 (iii)]{LU}). Theorems \ref{Th5} extends \cite[Theorem 1.3]{LU}. Indeed, for $k\geq 13$, it ensures that if $|\Gamma|\leq 5k-11$, then $\Gamma$ lies on a plane configuration of dimension $4$ (with $k\leq 12$ we have $5k-11 \leq 4k+1$ and so $\Gamma$ would lie, a fortiori, on a plane configuration of dimension $3$ by \cite[Theorem 1.3(ii)]{LU}). However, we would like to point out that Theorem \ref{Th5} does not prove \cite[Conjecture 1.2]{LU} in the cases $t=4$ and $k\geq 13$. In fact, with $t=4$ we have $5k-11 < 5k+1$, that is, the upper bound for $|\Gamma|$ in our theorem is less than the one that appears in the conjecture (even if, by contrast, we get a stronger thesis).
\end{remark}

Applying \cite[Conjecture 1.2]{LU} in the case $t=3$ (which has been proved in \cite[Theorem 1.3 (ii)]{LU}), in \cite[Theorem 1.4]{LU} the authors study the geometry of fibers of some maps from complete intersections varieties. Namely, they prove that if $X\subset\mathbb{P}^{n+2}$ is a complete intersection of a quartic and a hypersurface of degree $a\geq 4n-5$ and $f:X\dashrightarrow \mathbb{P}^n$ is a finite rational map of degree at most $3a$, then the general fiber of $f$ lies on a plane configuration of dimension $3$. By leveraging the extension of \cite[Theorem 1.3]{LU} stated in Remark \ref{improveConjLU}, we can give some conditions for the $4$-dimensional plane configuration case. In fact, we can prove the following more general result.

\begin{proposition}\label{planeconfOfCorresp}
Let $X\subset\mathbb{P}^{n+2}$ be a smooth complete intersection of hypersurfaces $Y_a$ and $Y_b$, of degrees $a$ and $b$ respectively. Then for any correspondence on $X$ of degree $m$ with null trace such that $m\leq 5(a+b-n)-26$, the general fiber lies on a plane configuration of dimension $4$.
\end{proposition}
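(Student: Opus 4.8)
The plan is to mimic the proof of Proposition \ref{strongTHcorrispnull}, transferring the Cayley-Bacharach machinery from hypersurfaces in $\mathbb{P}^n$ to the codimension two complete intersection $X\subset\mathbb{P}^{n+2}$. First I would compute the canonical bundle: by adjunction $K_X=(a+b-n-3)H$, where $H$ is the hyperplane section. Since having null trace imposes a Cayley-Bacharach condition, by \cite[Proposition 4.2]{B} the general fiber $\Gamma=\pi_2(\pi_1^{-1}(y))=\{P_1,\dots,P_m\}$ is $CB(a+b-n-3)$. Writing $k:=a+b-n-3$, the hypothesis $m\leq 5(a+b-n)-26 = 5k-11$ is exactly the bound of Theorem \ref{Th5}, so $\Gamma$ lies on a reduced curve of degree at most $4$.

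Next, by Remark \ref{improveConjLU} (the extension of \cite[Theorem 1.3]{LU} obtained from Theorem \ref{Th5}), since $\Gamma$ is $CB(k)$ with $|\Gamma|\leq 5k-11$, the set $\Gamma$ lies on a plane configuration of dimension $4$ provided $k\geq 13$, i.e. $a+b-n\geq 16$; and for $k\leq 12$ one has $5k-11\leq 4k+1$, so $\Gamma$ lies a fortiori on a plane configuration of dimension $3$ (hence also of dimension $4$) by \cite[Theorem 1.3(ii)]{LU}. So the conclusion "the general fiber lies on a plane configuration of dimension $4$" follows directly once we know $\Gamma$ lies on a curve of degree at most $4$ together with the bound on $|\Gamma|$; no Bezout-type argument ruling out components inside $X$ is needed here, in contrast with the proof of Theorem \ref{THcorrispnull}, because the target conclusion is about plane configurations, not about excluding families of curves covering $X$.

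The step I expect to require the most care is the reduction to Remark \ref{improveConjLU}: I must check that the hypothesis $m\leq 5k-11$ with $k=a+b-n-3$ is what is assumed, and that the dichotomy on the size of $k$ (whether $k\geq 13$ or $k\leq 12$) covers all cases and in each gives a plane configuration of dimension at most $4$. One should also verify the implicit positivity hypotheses — that $X$ is of general type (or at least that $k=a+b-n-3\geq 1$, which holds whenever $m\leq 5k-11$ forces $\Gamma$ nonempty via Lemma \ref{minumCB}, so that $k\geq 1$) — and that smoothness of $X$ guarantees $\omega_X$ is as computed and that \cite[Proposition 4.2]{B} applies. Finally I would record, as in the proof of Proposition \ref{strongTHcorrispnull}, the sharper statement that if $m\leq h(a+b-n-h-1)-1$ for $2\leq h\leq 5$ then the general fiber lies on a curve of degree $h-1$, which yields a plane configuration of dimension $h-1$ in the regime where Remark \ref{improveConjLU} applies.
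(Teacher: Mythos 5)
Your argument is correct and follows the paper's own proof essentially verbatim: compute $K_X=(a+b-n-3)H$, invoke \cite[Proposition 4.2]{B} to get that the general fiber is $CB(a+b-n-3)$, check that $m\le 5(a+b-n)-26$ is exactly the bound $5k-11$ with $k=a+b-n-3$, and conclude via Remark \ref{improveConjLU} (whose internal dichotomy $k\ge 13$ versus $k\le 12$ you correctly unpack). Your observation that no Bezout-type exclusion of components inside $X$ is needed here, unlike in Proposition \ref{strongTHcorrispnull}, is also accurate.
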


\begin{proof}
Let $\Gamma$ be a general fiber of the correspondence. Then, by \cite[Proposition 4.2]{B}, $\Gamma$ satisfies the Cayley-Bacharach condition with respect to $K_X$; that is, $\Gamma$ is $CB(a+b-n-3)$ since $X$ is the complete intersection of $Y_a$ and $Y_b$. Hence the proposition follows by Remark \ref{improveConjLU}.
\end{proof}

In particular, since a dominant rational map $X\dashrightarrow \mathbb{P}^n$ of degree $m$, from an $n$-dimensional variety $X$, gives rise to a correspondence of degree $m$ with null trace (see \cite[Example 4.6]{B}\footnote{Here the example is provided for $C^{(k)}$, but the argument works for the wider $n$-dimensional variety.}), we get the following corollary that partially extends \cite[Theorem 1.4(a)]{LU}.

\begin{corollary}\label{corrispAndPlanConf}
Let $n\geq 6$ and let $X\subset\mathbb{P}^{n+2}$ be the complete intersection of a quartic hypersurface and a  hypersurface of degree $\big\lceil \frac{5}{2}n+3 \big\rceil \leq a \leq 4n-6$. If $f:X\dashrightarrow \mathbb{P}^n$ is a finite rational map of degree at most $3a$, then the general fiber of $f$ lies on a plane configuration of dimension $4$.
\end{corollary}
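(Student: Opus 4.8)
The plan is to reduce Corollary \ref{corrispAndPlanConf} to Proposition \ref{planeconfOfCorresp} by passing from the rational map $f$ to its associated correspondence with null trace. Concretely, since $f\colon X\dashrightarrow \mathbb{P}^n$ is a finite dominant rational map of degree $m\le 3a$ from the $n$-dimensional variety $X$, by \cite[Example 4.6]{B} it gives rise to a correspondence $\Gamma$ of degree $m$ with null trace on $\mathbb{P}^n\times X$, whose general fiber is exactly a general fiber of $f$. So it suffices to check that the numerical hypothesis of Proposition \ref{planeconfOfCorresp} is satisfied with $b=4$, i.e. that $m\le 5(a+b-n)-26=5(a+4-n)-26=5a-5n-6$.

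First I would verify this inequality. We have $m\le 3a$, so it is enough that $3a\le 5a-5n-6$, that is $2a\ge 5n+6$, i.e. $a\ge \tfrac{5}{2}n+3$. Since $a$ is an integer, this is exactly the hypothesis $a\ge \big\lceil \tfrac{5}{2}n+3\big\rceil$. Hence $m\le 5(a+4-n)-26$ and Proposition \ref{planeconfOfCorresp} applies (with the two hypersurfaces being a quartic and the degree-$a$ one, whose complete intersection is $X$). It yields that the general fiber of the correspondence $\Gamma$ — which is a general fiber of $f$ — lies on a plane configuration of dimension $4$, which is the claim.

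I should also make sure the ambient hypotheses of Proposition \ref{planeconfOfCorresp} hold in this setting: $X$ is required to be a smooth complete intersection in $\mathbb{P}^{n+2}$, which is part of the hypothesis here (a general such complete intersection of the prescribed degrees is smooth). The constraints $n\ge 6$ and $a\le 4n-6$ are not needed for the deduction itself but ensure the statement is non-vacuous and genuinely extends \cite[Theorem 1.4(a)]{LU}: for $a$ in this range the bound $3a$ is strictly larger than the degree threshold guaranteed by \cite[Theorem 1.3(ii)]{LU}, and the upper bound $a\le 4n-6$ is the region not already covered by loc.\ cit. I would remark on this briefly, but it does not enter the proof.

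The argument is essentially a one-line numerical check once the dictionary ``rational map $\rightsquigarrow$ correspondence with null trace'' is invoked, so there is no real obstacle; the only thing to be careful about is the bookkeeping of the shift by $n$ and the constant $-26$ coming from $K_X=(a+b-n-3)H$ with $b=4$, and the rounding in $\lceil \tfrac{5}{2}n+3\rceil$. If one wanted to state the sharpest possible degree bound on $f$ one could instead write it directly as $m\le 5(a-n+4)-26=5a-5n-6$, but phrasing it as ``degree at most $3a$'' keeps the statement parallel to \cite[Theorem 1.4(a)]{LU}.
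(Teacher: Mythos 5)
Your proposal is correct and follows essentially the same route as the paper: pass from $f$ to a correspondence with null trace via \cite[Example 4.6]{B}, check that $3a\leq 5(a+4-n)-26$ is equivalent to $a\geq \frac{5}{2}n+3$ (hence to $a\geq\lceil\frac{5}{2}n+3\rceil$ for integral $a$), and conclude by Proposition \ref{planeconfOfCorresp}. The numerical bookkeeping matches the paper's $3a\leq 5(a-n+1)-11$, and your side remarks on smoothness and on the role of $n\geq 6$ and $a\leq 4n-6$ are consistent with the paper's surrounding remarks.
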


\begin{proof}
Let $\Gamma$ be a general fiber of $f$. Then, by \cite[Proposition 4.2]{B}, $\Gamma$ satisfies the Cayley-Bacharach condition with respect to $K_X$; that is, $\Gamma$ is $CB(a-n+1)$ since $X$ is a complete intersection. The assumption $a\geq \big\lceil \frac{5}{2}n+3 \big\rceil$ ensures that $3a\leq 5(a-n+1)-11$. Hence the corollary follows by Proposition \ref{planeconfOfCorresp}.
\end{proof}

\begin{remark}
The previous corollary even holds without the assumption $a \leq 4n-6$. Nevertheless, if $a\geq 4n-5$, then \cite[Theorem 1.4 (a)]{LU} ensures that the fiber $\Gamma$ lies on a plane configuration of dimension $3$. Moreover, condition $n\geq 6$ just needs to guarantee that  $\big\lceil \frac{5}{2}n+3 \big\rceil \leq 4n-6$.
\end{remark}

\begin{remark}
The assumptions $a\geq \big\lceil \frac{5}{2}n+3 \big\rceil$ and $n\geq 6$ in Corollary \ref{corrispAndPlanConf} ensure that $a-n+1\geq 13$. Hence \cite[Theorem 1.4(a)]{LU} does not imply that the fiber of $f$ lies on a plane configuration of dimension $3$ (cf. Remark \ref{improveConjLU}). Corollary \ref{corrispAndPlanConf} is therefore a real extension of \cite[Theorem 1.4(a)]{LU}.
\end{remark}

\begin{remark}
Clearly, Corollary \ref{corrispAndPlanConf} holds for any $X\subset\mathbb{P}^{n+2}$ complete intersection of hypersurfaces $Y_a$ and $Y_b$ of degrees $a$ and $b$ respectively and for any finite rational map $f:X\dashrightarrow \mathbb{P}^n$ of degree $m\leq 5(a+b-n)-26$. However, the formulation of Corollary \ref{corrispAndPlanConf} allows to deduce an analogues of \cite[Theorem 1.4(b)]{LU}. Namely, in this setting the conclusion of the corollary holds for any dominant rational map $f:X\dashrightarrow \mathbb{P}^n$ of minimum degree. In fact, since $n\geq 6$ the quartic hypersurface contains a line $\ell$ (see e.g. \cite{CK}). Now, if the hypersurface $Y$ of degree $a$ does not contain the line $\ell$, then $X\cap \ell$ has length $a$, so projection from $\ell$ yields a dominant rational map $X\dashrightarrow \mathbb{P}^n$ of degree $3a$. If, instead, $\ell\subset Y$, we can still consider the projection from $\ell$, but in this case it has degree $3(a-1)$; in any case less that $3a$.
\end{remark}

\begin{remark}
Analogously, following \cite[Example 4.7]{B}, it is possible to define a correspondence of degree $d$ with null trace from a family of $d$-gonal irreducible curves covering an $n$-dimensional variety $X$. This leads us to apply Proposition \ref{planeconfOfCorresp} in the following way. Let $X\subset\mathbb{P}^{n+2}$ be the complete intersection of hypersurfaces $Y_a$ and $Y_b$, of degrees $a$ and $b$ respectively. Let $T$ be a $(n-1)$-dimensional smooth variety and let $\mathcal{E}=\{E_t\}_{t\in T}$ be a family of irreducible $d$-gonal curves covering $X$, i.e. for any general point $x\in X$ there exists $t\in T$ such that $x\in E_t$ and for any $t\in T$ there exists an holomorphic map $f_t:E_t\to \mathbb{P}^1$ of degree $d$. If $d\leq 5(a+b-n)-26$, then the fiber $f_t^{-1}(z)=\{P_1,\dots,P_d\}$ lies on a plane configuration of dimension $4$ for any $t\in T$ and for any $z\in\mathbb{P}^1$.
\end{remark}
\medskip

\section*{Acknowledgements}
 I am grateful to my Ph.D. supervisor Francesco Bastianelli for getting me interested in these problems and for his patient support. I would also like to thank Andreas Leopold Knutsen for helpful suggestions.
\medskip

\bigskip

\textsc{Dipartimento di Matematica, Universit\'a degli Studi di Bari ``Aldo Moro", Via Edoardo Orabona 4, 70125 Bari -- Italy}

\emph{E-mail address}: nicola.picoco@uniba.it

\end{document}